\documentclass[11pt,a4paper]{scrartcl}
\usepackage[numbers,square]{natbib}
\usepackage{comment}
\usepackage{lipsum}
\usepackage{microtype}

\usepackage[font=small,labelfont=bf]{caption}
\usepackage
[
        a4paper,
        left=3cm,
        right=3cm,
        top=3cm,
        bottom=3cm,
]
{geometry}

\usepackage{setspace}
\usepackage{graphicx}
\graphicspath{{./figures}}

\usepackage{multicol,latexsym, array}
\usepackage{textcomp}
\usepackage[ansinew]{inputenc}
\usepackage{amsmath,amssymb, amsthm}
\usepackage[OT1]{fontenc}
\usepackage[english]{babel}
\usepackage{hyperref}

\hypersetup{
   colorlinks,
   linkcolor={blue},
   citecolor={blue},
   urlcolor={blue}
}

\usepackage{enumitem}
\usepackage{MnSymbol}
\usepackage{empheq}
\usepackage{rotating}
\usepackage{titletoc}
\usepackage{framed}
\usepackage{multirow,bigdelim}
\usepackage{subfigure}
\usepackage{bbm}
\usepackage[noend]{algpseudocode}
\usepackage{algorithm}
\usepackage{pgf}
\usepackage{pgfplots}
\usepackage{tikz}
\pgfplotsset{compat=1.17}
\usepackage{shadethm}
\usepackage{thmtools}
\usepackage{mdframed}
\usepackage{lipsum}
\usepackage{fixmath}
\usepackage{multicol}
\usepackage{footmisc}
\usepackage{macros}

\usepackage{xargs}  
\usepackage{xcolor} 

\definecolor{DarkGray}{RGB}{90,90,90}

\usepackage[colorinlistoftodos,prependcaption,textsize=tiny]{todonotes}
\usepackage{cleveref}

\date{\today}

    {
    \paragraph{Acknowledgements:} 
    }

\usepackage{mathrsfs}
\usetikzlibrary{arrows}

\usepackage{xpatch}

\makeatletter
\xpatchcmd{\endmdframed}
  {\aftergroup\endmdf@trivlist\color@endgroup}
  {\endmdf@trivlist\color@endgroup\@doendpe}
  {}{}
\makeatother

\usepackage{nicefrac}
\usepackage{dsfont}

\usepackage{bigints}
\usepackage{mathtools}
\usepackage{fixltx2e}

\usepackage{mathtools}
\usepackage{color}
\usepackage{fancyhdr}
\usepackage{lastpage}

\newcommand{\Cov}{\operatorname{Cov}}

\newcommand{\eps}{\varepsilon}

\newcommand{\Id}{\operatorname{Id}}

\newcommand{\vol}{\mathrm{vol}}
\newcommand{\dd}{\mathrm{d}}

\theoremstyle{plain}
\theoremstyle{remark}

\numberwithin{equation}{section}

\newcommand{\footremember}[2]{
	\footnote{#2}
	\newcounter{#1}
	\setcounter{#1}{\value{footnote}}
}

\newtheoremstyle{thmAB}%
  {6pt}{6pt}
  {\itshape}
  {}
  {\bfseries}
  {.}
  { }
  {\thmname{#1}\thmnumber{ #2}\thmnote{ \normalfont(#3)}}

\theoremstyle{thmAB}

\newtheorem{theoremA}{Theorem}

\newtheorem{theoremB}{Theorem}

\newif\ifexclude
\excludefalse  
\newcommand{\excludePart}[1]{\ifexclude \else #1 \fi}
\excludetrue

\begin{document}

\title{Large deviations for sums of multivariate stretched-exponential random variables:\\ the few-big-jumps principle}

\author{
  \begin{tabular}{ccc}
    \begin{tabular}{c}
      Nina Gantert\footremember{tum}{\scriptsize \;\;Technical University of Munich (TUM), Munich, Germany} \\[-1ex]
      \footnotesize{\href{mailto:nina.gantert@tum.de}{nina.gantert@tum.de}}
    \end{tabular}&
    \begin{tabular}{c}
      Joscha Prochno\footremember{passau}{\scriptsize \;\;University of Passau, Passau, Germany} \\[-1ex]
      \footnotesize{\href{mailto:joscha.prochno@uni-passau.de}{joscha.prochno@uni-passau.de}}
    \end{tabular}&
    \begin{tabular}{c}
      Philipp Tuchel\footremember{bochum}{\scriptsize \;\;Faculty of Mathematics, Ruhr University Bochum, Bochum, Germany} \\[-1ex]
      \footnotesize{\href{mailto:philipp.tuchel@ruhr-uni-bochum.de}{philipp.tuchel@ruhr-uni-bochum.de}}
    \end{tabular}
\end{tabular}\vspace{0.2cm}
}
\pagenumbering{arabic}
\date{}
\maketitle
\begin{abstract}
  \small Large deviations for sums of i.i.d.\ random variables with stretched-exponential tails (also called Weibull or semi-exponential tails) have been well understood since the 60's, going back to Nagaev's seminal work. Many extensions in the $1$-dimensional setting have been developed since then, showing that such deviations are typically governed by a single big jump. In higher dimensions, a corresponding theory has remained largely undeveloped. This work provides such a multivariate extension and establishes large deviation results for sums of i.i.d.\ random vectors in $\mathbb{R}^k$ under fairly general assumptions. Roughly speaking, for some $\alpha\in(0,1)$, the log-probability of one random vector divided by $x$ exceeding a threshold $t$ in all components behaves asymptotically, for large $x$, as $x^\alpha$ times a negative infimum of a function $\mathcal{J}$. We prove large deviation results for sums of i.i.d.\ copies, where the rate function is given by a minimization of at most $k$ summands of $\mathcal{J}$. This establishes a few-big-jumps principle that generalizes the classical $1$-dimensional phenomenon: the deviation is typically realized by \emph{at most} $k$ independent vectors. The results are applied to absolute powers of multivariate Gaussian vectors as well as to various other examples. They also allow us to study random projections of high-dimensional $\ell_p^N$-balls, revealing interesting insights about the appearance of light- and heavy-tailed distributions in high-dimensional geometry.

  \medspace
	\vskip 1mm
	\noindent{\textbf{Keywords}}. {Large deviations, stretched exponential random variables, Weibull-type distributions, empirical mean, limit theorems, multivariate random variables}\\
	{\textbf{MSC}}. Primary 60F10; Secondary 60G70, 60D05.
\end{abstract}

\excludePart{
\vspace{0.5cm}
\noindent \textit{Keywords}: 
\vspace{0.5cm}

\noindent \textit{MSC 2020 subject classification}: primary ; secondary
 }
\date{} 

\section{Introduction}

In probability, the theory of large deviations provides a quantitative description of rare events. It originated in Cram\'er's work on sums of i.i.d.\ random variables from 1938, where he proved that if the moment generating function of the summands is finite in a neighborhood of the origin, then the empirical mean satisfies a large deviation principle with an exponential speed and a convex rate function given by the Legendre transform of the logarithmic moment generating function (see \cite{cramer1938nouveau}). Later developments by Donsker and Varadhan and the subsequent formulation of the large deviation framework by Varadhan and others have elevated large deviations to a cornerstone of modern probability theory (see \cite{cramer1938nouveau}, \cite{donsker1975asymptotic}, \cite{varadhan1966asymptotic} and the book by \cite{dembo2009techniques}). In this Cram\'er regime, the probability that the empirical mean deviates from its expectation decays at an exponential speed, i.e., much faster than a straight forward application of the law of large numbers suggests. For heavy-tailed distributions, however, the moment generating functions do not exist and the classical Cram\'er theory does not apply. The limiting behavior in this case is entirely different and is typically governed by large but rare summands. In one dimension, the empirical mean of i.i.d.\ heavy-tailed variables typically deviates, since a single summand causes the deviation. By writing $a(x)\sim b(x)$ if $a(x)/b(x)\to1$ as $x\to\infty$, a random variable $Y$ with values in $\RR$ has a $1$-dimensional \emph{stretched-exponential tail} if there exist $c>0$ and $\alpha\in(0,1)$ such that
\[
\log \PP(Y\ge t)\sim -c\,t^\alpha \qquad (t\to\infty).
\]
These random variables are also called \emph{Weibull-like} or \emph{semi-exponential} random variables. Their tails do not decay exponentially fast, but faster than any inverse power. This tail behavior lies just below the light-tailed case: at the endpoint $\alpha=1$ the asymptotics become exponential, corresponding to a setting with finite exponential moments, and hence to the classical Cram\'er theorem. In this natural regime, it follows from Nagaev's result (\cite{Nagaev1969a,Nagaev1969b,nagaev1969integral}) that if $(Y_i)_{i\ge1}$ are i.i.d.\ centered with $Y_1$ having stretched-exponential tail as above, then for every $t>0$,
\[
\lim_{N\to\infty}\frac{1}{N^\alpha}\log \PP\left(\frac{1}{N}\sum_{i=1}^N Y_i\ge t\right)
=\lim_{N\to\infty}\frac{1}{N^\alpha}\log \PP(Y_1\ge Nt) 
= -c\,t^\alpha .
\]
This phenomenon is well known and often referred to as \emph{one-big-jump principle} or \emph{principle of a single big jump}. The upper tail of the sum is, on the logarithmic scale, the same as the tail of the maximum of the summands. Subsequent work has shown that this principle persists in a variety of settings. Denisov, Dieker, and Shneer proved that for general sub-exponential distributions one has $\PP(\sum_{i=1}^N X_i > t) \sim N\PP(X_1> t)$, $t\to\infty$, so that a rare event in the sum is caused by one large jump, see~\cite{Denisov2008}. Gantert, Ramanan, and Rembart established large deviations for certain weighted sums of stretched-exponential random variables, see~\cite{Gantert2014}. Lehtomaa later obtained a large deviation principle (LDP) for the sample mean of i.i.d.\ heavy-tailed variables with finite moments of all orders, see~(\cite{lehtomaa2017large}). Brosset et al. identified the transition speed between the Gaussian/moderate deviation regime and the sub-exponential one-big-jump regime for stretched-exponential tails, providing asymptotics across regimes and an explicit rate function exactly at the transition scale, see~\cite{brosset2022large}. Aurzada extended in \cite{Aurzada2020large} the weighted-sum theory to infinite weighted series of independent stretched-exponential variables, which generalizes~\cite{Gantert2014}. A comprehensive survey of large deviations for $1$-dimensional heavy-tailed sums may be found in~\cite{Denisov2008}.

Surprisingly, beyond the $1$-dimensional setting, much less is known, even though vectors with heavy-tailed components arise naturally in many applications ranging from insurance and finance to telecommunications and statistical mechanics. The case of independent components is easy to obtain from the $1$-dimensional setting, but dependence between components poses significant challenges. For heavy-tailed risk processes, H\"agele and Lehtomaa established multivariate large deviation principles ( see~\cite{hagele2021large}) under the assumption that $\bX = R\bU$, where $R$ is $1$-dimensional stretched-exponential and (in a certain sense) asymptotically independent of $\bU$, which is distributed on the unit sphere. In this setup, the problem can essentially be reduced to the $1$-dimensional setting by using the tails of $R$. However, a general description of large deviations for empirical means of stretched-exponential random vectors remained open. We found no unified definition of multivariate stretched-exponential tails in the literature. Nevertheless, we propose a natural notion that extends the $1$-dimensional definition and is applicable across a broad range of settings.

In this paper, we generalize the $1$-dimensional results to some multivariate random vectors with stretched-exponential tails. We consider an i.i.d.\ sequence $(\bX_i)_{i\ge1}$ of centered $\mathbb{R}^k$-valued random vectors whose marginal tails decay like $\exp(-c\,t^\alpha)$ for some $\alpha\in(0,1), c>0$, and whose joint tail is governed by a function $\calJ$, see Definition \ref{def:multiSE}. Our main result, Theorem \ref{thm:LDP-multi}, shows that if $x_N\to\infty$ with $x_N/N^{1/(2-\alpha)}\to\infty$, then the vector $\frac1{x_N}\sum_{i=1}^N \bX_i$ is (componentwise) larger than $\bt>0$ with probability $\sim\exp(-x_N^\alpha \calI_{\calJ}(\bt))$, where the rate function $\calI_\calJ$ is obtained by minimizing at most $k$ summands of $\calJ$. In contrast to the $1$-dimensional case, the deviation need not be realized by a single sample vector. At most $k$ summands contribute, each corresponding to a single vector deviation while each vector may contribute to the deviation of multiple coordinates. This \emph{few-big-jumps principle} extends the large deviation results for stretched-exponential random variables to the multivariate setting. We summarize the behavior of \(\frac1{x_N}\sum_{i=1}^N \bX_i\) depending on the growth of the scale \(x_N\) in \Cref{tab:scales} for the canonical regimes. Our main result covers the large deviation regime, while the scaling \(\sqrt{N} \ll x_N \ll N^{1/(2-\alpha)}\) is typically referred to as the moderate deviation regime (see \Cref{thm:LDP-multi} and \Cref{thm:MDP}). Our definition of multivariate stretched-exponential random variables is new. We obtain a condition on the density, which guarantees that a random vector satisfies our definition, and we apply our general theory to absolute powers of multivariate Gaussian vectors, multivariate Weibull distributions in the sense of \cite{hanagal1996multivariate}, multivariate generalized Gaussians and more. We also uncover interesting phenomena in high-dimensional geometry by applying our results to projections of $\ell_p^N$-balls, where we see some geometric analogues to light- and heavy-tailed distributions (see also \cite{kim2021large, prochno2024limit}).

The paper is organized as follows. We give the main definitions and results in Section \ref{sec: main-results}. Section \ref{sec: applications} presents applications, in particular to absolute powers of Gaussian vectors and to random projections of high-dimensional \texorpdfstring{$\ell_p^N$}{lpN}-balls. In Section \ref{sec: preliminaries}, we provide some preliminaries, and Section \ref{sec: Proofs} contains the proofs.

\begin{table}[h!]
\centering
\small
\setlength{\tabcolsep}{4pt}
\renewcommand{\arraystretch}{1.2}
\begin{tabular}{@{}p{3.3cm}p{3.8cm}p{1.2cm}p{6.5cm}@{}}
\hline\hline
\textbf{Regime} & \textbf{Scale for \(x_N\)} & \textbf{Speed} & \textbf{Limit / Rate Function}\\
\hline
Gaussian fluctuations &
\(x_N=\sqrt N\) &
-- &
\(\frac1{\sqrt N}\sum_{i=1}^N \bX_i \Rightarrow \mathcal N(\mathbf 0,\bSigma)\).\\[2pt]
Moderate deviations &
\(\sqrt N\ll x_N\ll N^{1/(2-\alpha)}\) &
\(x_N^2/N\) &
\(I(\bt)=\tfrac12\,\bt^\top\bSigma^{-1}\bt\).\\[2pt]
Large deviations &
\(N^{1/(2-\alpha)}\ll x_N\) &
\(x_N^\alpha\) &
\(\calI_{\calJ}(\bt)=\min_{\sum_{r=1}^k \bt^{(r)} = \bt}\sum_{r=1}^k \calJ\big(\bt^{(r)}\big)\).\\[2pt] 
\hline\hline
\end{tabular}
\caption{Fluctuation and deviation regimes for \(x_N^{-1}\sum_{i=1}^N \bX_i\) under Definition~\ref{def:multiSE} and Assumption~\ref{assu-LT}. Here \(\Cov(\bX_1) = \bSigma\).}
\label{tab:scales}
\end{table}

\section{Main results}\label{sec: main-results}

For $\alpha>0$, a function $f:\bbR^k\to[0,\infty]$ is called (positively) \emph{$\alpha$-homogeneous} if and only if $f(\lambda \bx)=\lambda^\alpha f(\bx)$ for all $\lambda > 0$. For $\bt=(t_1,\dots,t_k)$ and $\bs=(s_1,\dots,s_k)$ write $\bs\ge\bt$ if $s_i\ge t_i$ for all $i\in\{1,\dots,k\}$. We write $\bt^\alpha$ (in case of non-negative components) and $\max\{\bs,\bt\}$ for the componentwise power and maximum, respectively. By $\bt\odot\bs$ we denote the componentwise or Hadamard product.
For $B\subseteq \{1,\ldots, k\}$ and $\bx\in\bbR^k$ write $\bx_B$ for the vector $(x_i)_{i\in B}$. Let $\bbR^k_+ := \{\bx\in\bbR^k: \bx\ge 0\}$ and $\bt_+ := \max\{0,\bt\}$ for $\bt\in\bbR^k$. 
We write $\|\bx\|$ for the Euclidean norm of $\bx$ and $\bfm 1:=(1,\ldots,1)\in\bbR^k$.

\noindent We state the following natural assumption that will be used to ensure that the left tails are not heavier than the right tails.

\begin{assumption}
\label{assu-LT}
For $\bY = (Y_1,\dots,Y_k)$ and each $j\in \{1,\dots,k\}$ there exists $c_j>0$ such that for all large enough $t>0$,
\begin{equation}\label{LT}
\log\PP(Y_j\le -t) \le -c_j t^\alpha.
\end{equation}
\end{assumption}

\subsection{Large deviations and a few-big-jumps principle}\label{sec:large-deviations-for-right-tails}

We give the definition of multivariate stretched-exponential tails and state the main result. 

\begin{definition}[Multivariate stretched-exponential tails]
    \label{def:multiSE}
    Let $\alpha\in(0,1)$. A centered random vector $\bY=(Y_1,\dots,Y_k)$ with values in $\bbR^k$ has \emph{stretched-exponential tail} of rate $(\calJ,\alpha)$ if there exists a lower semicontinuous function $\calJ:\bbR_+^k\to[0,\infty)$ such that
    \[
    \lim_{x\to\infty}\frac{1}{x^\alpha}\log \PP\big(\bY \ge x\bt \big) = -\calJ(\bt_+)
    \]
    for all $\bt\in\bbR^k$ with at least one (strictly) positive component. Further, assume that $\calJ$ is non-degenerate in the sense that $\calJ(\bt)>0$ for all $\bt\in\bbR_+^k\setminus\{\bfm 0\}$.
\end{definition}

\noindent Often, $\calJ$ is given by the infimum of some function $\overline{\calJ}$, i.e. $\calJ(\bt)=\inf_{\bs\ge \bt}\overline{\calJ}(\bs)$. In the above definition, the function $\calJ$ is necessarily $\alpha$-homogeneous (see Lemma \ref{lem:homogeneity_barJ}) and componentwise nondecreasing. We further remark that $\bY$ satisfying Definition \ref{def:multiSE} and Assumption \ref{assu-LT} has finite second moments in each marginal (see Lemma \ref{lem:finite-second}). \\

\noindent For $\calJ:\bbR_+^k\to[0,\infty)$, we define the rate function $\calI_\calJ:\bbR_+^k\to[0,\infty)$ by
\begin{equation}\label{def-rate-fct}
\calI_\calJ(\bt)
:=\min\Big\{\sum_{r=1}^k \calJ\big(\bt^{(r)}\big):
\bt^{(r)}\in\bbR_+^k, \sum_{r=1}^k \bt^{(r)} = \bt\Big\}.
\end{equation}

\begin{theoremA}[Large deviations for sums of stretched-exponential random vectors]
\label{thm:LDP-multi}
Fix $\alpha\in(0,1)$. Let $(\bX_i)_{i\ge1}$ be i.i.d. random vectors with values in $\bbR^k$ and assume that $\bX_1$ satisfies \Cref{def:multiSE} with rate $(\calJ,\alpha)$ and Assumption \ref{assu-LT}. Let $(x_N)_{N\ge1}$ be a sequence of positive numbers such that $x_N\to\infty$ and $x_N N^{-1/(2-\alpha)}\to\infty$. Then, for any $\bt \in (0,\infty)^k$,
\[
\lim_{N\to\infty}\frac{1}{x_N^\alpha}\log \PP\left(\frac{1}{x_N}\sum_{i=1}^N \bX_i \ge \bt\right)
= - \calI_{\calJ}(\bt)
\]
\end{theoremA}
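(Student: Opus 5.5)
We prove matching lower and upper bounds, following the one-dimensional Nagaev strategy. Throughout write $S_N=\sum_{i=1}^N\bX_i$. We use that $\calJ$ is $\alpha$-homogeneous and componentwise nondecreasing (Lemma \ref{lem:homogeneity_barJ}) and that the marginals have finite second moments (Lemma \ref{lem:finite-second}).

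\textbf{Lower bound.} Take an optimal decomposition $\bt=\sum_{r=1}^k\bt^{(r)}$ in \eqref{def-rate-fct}; the minimum is attained by lower semicontinuity and compactness of the constraint set. Fix $\delta>0$ and consider the event that, for each $r$ with $\bt^{(r)}\neq\bfm 0$, the vector $\bX_r$ satisfies $\bX_r\ge x_N(\bt^{(r)}+\delta\bfm 1/k)$, and that the remaining sum satisfies $\sum_{i>k}\bX_i\ge -x_N\delta\bfm 1$. The vectors $\bt^{(r)}+\delta\bfm 1/k$ have positive components, so Definition \ref{def:multiSE} applies to each of them. By independence, the log-probability is at least $-x_N^\alpha\sum_r\calJ(\bt^{(r)}+\delta\bfm 1/k)(1+o(1))$, plus the log of a probability that tends to $1$. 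That last probability tends to $1$ by Chebyshev, since $x_N\gg\sqrt N$ and the second moments are finite. Monotonicity gives $\calJ(\bt^{(r)}+\delta\bfm 1/k)\ge\calJ(\bt^{(r)})$, so we still need $\limsup_{\delta\to0}\calJ(\bs+\delta\bfm1)\le\calJ(\bs)$. To get this, compare $\bs+\delta\bfm 1$ with $(1+\eta)\bs$ when $\bs$ has positive components, and use homogeneity. Zero components of $\bt^{(r)}$ need separate care: there I would instead use the event $\bX_r\ge x_N\bt^{(r)}$ with the zero coordinates replaced by $-\delta$, which the definition allows via $\calJ(\bt_+)$. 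This gives $\calJ(\bt^{(r)})$ exactly, and the $-\delta x_N$ losses are absorbed by perturbing $\bt$ slightly, using continuity of $\calI_\calJ$ along rays (homogeneity).

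\textbf{Upper bound.} Truncate at level $\eps x_N$. Call an index big if $\|\bX_i\|_\infty>\eps x_N$ (the norm is taken on the positive parts). By a union bound, the probability of having at least $m$ big indices is at most $N^m\PP(\|\bX_1\|_\infty>\eps x_N)^m$. This is $\exp(-m\,c\,\eps^\alpha x_N^\alpha(1+o(1)))$, because $\log N=o(x_N^\alpha)$. Choosing $m$ large, depending on $\eps$, makes this negligible relative to $\calI_\calJ(\bt)$. So it suffices to treat at most $m$ big indices. For the small part, apply a Nagaev-type exponential Chebyshev bound coordinatewise to the truncated variables $X_{ij}\mathbbm 1\{X_{ij}\le\eps x_N\}$. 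With $\lambda=\theta x_N^{\alpha-1}$, the Taylor estimate $\log\EE e^{\lambda X}\le C\lambda^2+$ tail terms controlled by the stretched-exponential tail gives
\[
\PP\Big(\textstyle\sum_{\text{small}}X_{ij}\ge\delta x_N\Big)\le\exp\bigl(-\theta\delta x_N^\alpha+CN\theta^2x_N^{2\alpha-2}\bigr).
\]
The error term is $o(x_N^\alpha)$ exactly because $x_N\gg N^{1/(2-\alpha)}$. Taking $\theta$ large and $\eps$ small, depending on $\theta$, makes the small part contribute $\le\delta x_N$ in every coordinate outside a negligible event. Assumption \ref{assu-LT} controls the left tails, so that negative big jumps and negative small sums do not help.

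\textbf{Combinatorial step (main obstacle).} On the remaining event there are at most $m$ big jumps $\bX_{i_1},\dots,\bX_{i_\ell}$ with $\sum_{q}(\bX_{i_q})_+\ge x_N(\bt-\delta\bfm1)$. Discretize the positive parts on a finite grid of mesh $\delta x_N$ up to level $Kx_N$; larger values cost more than $\calI_\calJ(\bt)$. By independence and Definition \ref{def:multiSE}, applied with $-\delta$ in coordinates where the grid value is $0$, each configuration has log-probability $\le -x_N^\alpha\sum_q\calJ(\bs^{(q)})+o(x_N^\alpha)$ with $\sum_q\bs^{(q)}\ge\bt-O(\delta)\bfm1$. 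The combinatorial factor $N^\ell$ times the grid size is $e^{o(x_N^\alpha)}$. The key lemma is that for any number $\ell$ of summands, $\inf\{\sum_{q=1}^\ell\calJ(\bs^{(q)}):\sum_q\bs^{(q)}\ge\bt\}\ge\calI_\calJ(\bt)$. That is, at most $k$ jumps are ever needed. I would prove it by concavity-type subadditivity: since $\alpha<1$ and $\calJ$ is $\alpha$-homogeneous and monotone, merging two jumps $\bs,\bs'$ dominated in the same set of coordinates does not increase cost in the optimum. More concretely, the minimization of a sum of $\alpha$-homogeneous terms with $\alpha<1$ over the polytope of decompositions attains its minimum at extreme points, and extreme points of $\{(\bs^{(q)}):\sum\bs^{(q)}=\bt,\bs^{(q)}\ge0\}$ use at most $k$ nonzero vectors. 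Rigorously justifying this concave-minimization step without assuming concavity of $\calJ$ is where I expect the difficulty. I would try first to show $\lambda\mapsto\calJ$ summed along segments between decompositions is concave enough via homogeneity: for $\bs^{(q)}=\lambda_q\bu$ parallel pieces merge with cost $(\sum\lambda_q)^\alpha\le\sum\lambda_q^\alpha$, and reduce the general case to a Carathéodory argument on the directions. Finally, let $\delta\to0$ using the continuity established in the lower bound.
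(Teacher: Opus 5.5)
Your plan follows the paper's route almost step for step. In the lower bound you use at most $k$ independent big jumps plus a second-moment bound for the rest of the sum. In the upper bound you discard configurations with many big jumps, apply a truncated exponential Chebyshev bound with $\lambda$ of order $x_N^{\alpha-1}$ (the paper's Lemma~\ref{lem:upperbound-small-part}), use a finite grid, and reduce to at most $k$ summands.

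The ``Carath\'eodory argument on the directions'' you end with is exactly the paper's proof of Lemma~\ref{lem:Ij_k_summands}:
\begin{itemize}
\item Fix the directions $\bu^{(r)}=\bt^{(r)}/\|\bt^{(r)}\|_1$. Then the cost $F(\ba)=\sum_r a_r^\alpha\calJ(\bu^{(r)})$ is concave in the magnitudes whether or not $\calJ$ is concave.
\item Minimize $F$ over the polytope $\{\ba\ge0:\sum_r a_r\bu^{(r)}=\bt\}$. An extreme point has at most $k$ positive entries, since otherwise the active $\bu^{(r)}$ would be linearly dependent.
\item The constraint $\ge\bt$ is first reduced to $=\bt$ by a componentwise-min construction and monotonicity of $\calJ$.
\end{itemize}
Your ``more concretely'' version over the polytope of all decompositions does not work. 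There $\sum_q\calJ(\bs^{(q)})$ is not concave, and the extreme points only produce decompositions with disjoint supports.

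In the lower bound, the $\delta\bfm1/k$ shift and the semicontinuity question are unnecessary. Definition~\ref{def:multiSE} applies directly to every nonzero $\bt^{(r)}\in\bbR_+^k$, so it suffices to decompose $(1+\eta)\bt$, as the paper does.

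Two steps of your upper bound fail as written, though both are easy to repair.

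\textbf{Zero grid coordinates.} For a grid point $\bs$ with $s_j=0$, the event $\{(\bX_1)_+\ge x_N\bs\}$ puts no lower bound on $X_{1,j}$. So you cannot bound its probability by Definition~\ref{def:multiSE} with threshold $-\delta$ in that coordinate. The discrepancy $\PP(X_{1,j}<-\delta x_N)$ can be of order $e^{-c\delta^\alpha x_N^\alpha}$, which for small $\delta$ is far larger than $e^{-\calI_\calJ(\bt)x_N^\alpha}$. Instead use a threshold $-Lx_N$ with $cL^\alpha>\calI_\calJ(\bt)$ together with Assumption~\ref{assu-LT}; the definition still yields rate $\calJ(\bs)$. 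This is the role of the paper's two-sided box $\calE_{\mathrm{box}}$.

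\textbf{Grid mesh.} A mesh of $\delta x_N$ is too coarse, because up to $m$ jumps are rounded down and $m$ grows as $\delta\downarrow0$. You need $\theta\ge(\calI_\calJ(\bt)+1)/\delta$, then $\eps^{1-\alpha}$ of order $1/\theta$, then $m$ of order $\eps^{-\alpha}$. Hence the accumulated loss $m\delta$ need not be small, and for $\alpha>1/2$ it diverges. Choose the mesh $\rho$ after $m$ with $m\rho\le\delta$, as in \eqref{eq:rho-choice}.

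With these repairs your argument is complete. Your global coordinatewise truncation $\sum_i X_{ij}\mathbf 1\{X_{ij}\le\eps x_N\}$, which charges the remaining excess to $\sum_{i\ \mathrm{big}}(\bX_i)_+$, is slightly cleaner than the paper's conditioning on the set $\Gamma=A$ of big indices.
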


\noindent The above can be rewritten as 
\[
\PP\left(\frac{1}{x_N}\sum_{i=1}^N \bX_i \ge \bt\right) = \exp(-x_N^\alpha (\calI_\calJ(\bt)+o(1))).
\]

This theorem generalizes the $1$-dimensional result to the multivariate setting and for $k=1$, the theorem reduces to the classical $1$-dimensional result. From the rate function, it can be seen that a typical deviation is realized by at most $k$ independent deviations, where the size of each of these deviations is such that the variational problem in \eqref{def-rate-fct} is minimized. Each of the non-zero summands corresponds typically to one deviation of a random vector in the sum. Note however, that one vector may contribute to the deviation of multiple coordinates. In some situations, the deviation is realized by only one vector as in the $1$-dimensional case, but we give also examples where the deviation is typically caused by multiple vectors (see Lemma \ref{lem:two-big-jumps}). In the above setting, we can choose $x_N = N$ for any $\alpha \in (0,1)$.

\begin{remark}
The rate function $\calI_{\calJ}$ is not necessarily convex nor concave even in simple examples (see Lemma \ref{lem:I_not_convex_nor_concave_q_gt_2_rho_pos}). This contrasts with the light tailed case in Cram\'er's theorem, where the rate function is convex, and with the $1$-dimensional stretched-exponential case, where the rate function is concave.
\end{remark}

\subsection{Moderate deviation principles}

When $x_N$ grows slower than \(N^{1/(2-\alpha)}\), one is in the Gaussian or moderate deviation range. The following statement is known, but we could not find it in the literature as stated. We include it for completeness as this, together with Theorem \ref{thm:LDP-multi}, gives a full picture of the large deviations for sequences $(x_N)_{N\ge 1}$ with $x_NN^{-1/2}\to\infty$ and either $x_N N^{-1/(2-\alpha)}\to\infty$ or $x_N N^{-1/(2-\alpha)}\to 0$. For the case where $x_N \sim c N^{1/(2-\alpha)}$ for some $c>0$, the situation seems to be more involved. For the $1$-dimensional case, this case is understood and the rate function is given by a combination of the rate functions of the both regimes (see \cite{brosset2022large}). In the multivariate case, this question remains open.

For the definition of a large deviation principle, see Definition \ref{def: LDP} and for a result on Banach space valued random variables, see Proposition \ref{prop: MDP general}.

\begin{theoremB}[Moderate deviations]
\label{thm:MDP}
Let $(x_N)_{N\ge 1}$ satisfy $\sqrt{N}/x_N\to0$ and $x_N N^{-1/(2-\alpha)}\to0$. Let $\bX,\bX_1,\dots,\bX_N$ be i.i.d. centered with values in $\bbR^k$ and assume that for some $c>0$ and all large $t$,
\[
\PP(\|\bX\|\ge t)\le \exp(-c\,t^\alpha).
\]
If $\bSigma=\Cov(\bX)$ is invertible, then $\frac1{x_N}\sum_{i=1}^N \bX_i$ satisfies an LDP on $\bbR^k$ with speed $x_N^2/N$ and good rate function
\[
I(\bz)=\frac12\,\bz^\top \bSigma^{-1}\bz.
\]
In particular, for every $\bt\in(0,\infty)^k$,
\[
\lim_{N\to\infty}\frac{N}{x_N^2}\log \PP\left(\frac{1}{x_N}\sum_{i=1}^N \bX_i\ge \bt\right)
= - \inf_{\bz\ge \bt}\ \frac12\,\bz^\top \bSigma^{-1}\bz.
\]
\end{theoremB}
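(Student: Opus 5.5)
We derive Theorem~\ref{thm:MDP} from the general moderate deviation principle for Banach space valued random variables, Proposition~\ref{prop: MDP general}, applied in $\bbR^k$ with the Euclidean norm. The main work is checking its tail hypothesis; afterwards, identifying the rate function and passing to the orthant event are routine.

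First, the tail bound $\PP(\|\bX\|\ge t)\le \exp(-ct^\alpha)$ gives all moments of $\|\bX\|$. In particular $\bSigma$ is well defined and finite, and the Gaussian CLT limit $\mathcal N(\mathbf 0,\bSigma)$ is available. Second, we verify the de Acosta--type condition that Proposition~\ref{prop: MDP general} presumably requires, namely
\[
\limsup_{N\to\infty}\frac{N}{x_N^2}\log\Big(N\,\PP\big(\|\bX\|>x_N\big)\Big)=-\infty .
\]
By the hypothesis, $\log(N\PP(\|\bX\|>x_N))\le \log N - c x_N^\alpha$. Multiplying by $N/x_N^2$ gives $\frac{N\log N}{x_N^2}- c\frac{N}{x_N^{2-\alpha}}$. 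Since $x_N N^{-1/(2-\alpha)}\to0$, the ratio $N/x_N^{2-\alpha}\to\infty$, so the negative term dominates. For the first term, we use $x_N\le N^{1/(2-\alpha)}$ for large $N$, so that $x_N^{\alpha}\ll N\,x_N^{-(2-\alpha)}\cdot x_N^{...}$. More directly, $\frac{N\log N}{x_N^2}\big/\frac{N}{x_N^{2-\alpha}}=\frac{\log N}{x_N^\alpha}\to0$, because $x_N\ge \sqrt N$ eventually. So the expression tends to $-\infty$.

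If Proposition~\ref{prop: MDP general} is instead phrased via a truncation, we split $\bX_i=\bX_i\mathbbm 1_{\|\bX_i\|\le \delta x_N}+\bX_i\mathbbm 1_{\|\bX_i\|>\delta x_N}$. The bounded part is handled by the Gärtner--Ellis theorem: a Taylor expansion of the log-mgf at scale $\lambda x_N/N$ gives $\Lambda(\lambda)=\frac12\lambda^\top\bSigma\lambda$. Here the cubic error is $O(\delta)\|\lambda\|^2$, which is harmless after letting $\delta\to0$. The centering correction of the truncated mean is $o(x_N/N)$ by the moment bounds. The large part is exponentially negligible at speed $x_N^2/N$ by exactly the computation above with $\delta x_N$ in place of $x_N$. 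We expect this step to be the main obstacle: the growth condition on $x_N$ must be used sharply to kill both the $N$ union factor and the truncation error.

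Finally, the rate function is the Legendre transform of $\frac12\lambda^\top\bSigma\lambda$, which is $\frac12 \bz^\top\bSigma^{-1}\bz$ because $\bSigma$ is invertible. It is good because it is coercive and continuous. For the orthant statement, the set $\{\bz\ge\bt\}$ is closed and is the closure of its interior $\{\bz>\bt\}$. Since $I$ is continuous, $\inf_{\bz>\bt}I=\inf_{\bz\ge\bt}I$. The LDP upper and lower bounds then coincide and yield the stated limit.
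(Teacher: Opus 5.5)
Your main route is the paper's own proof. The paper writes $x_N=\sqrt N\,a_N$ and invokes Corollary~\ref{cor: MDP in Rd}, which is Proposition~\ref{prop: MDP general} specialized to $\psi(\bx,\btheta)=\bx-\btheta$, and checks exactly your condition $\limsup_{N}\frac{N}{x_N^2}\log\big(N\,\PP(\|\bX\|\ge x_N)\big)=-\infty$ via $\log N=o(x_N^\alpha)$ and $N/x_N^{2-\alpha}\to\infty$; so your truncation fallback is unnecessary, and as sketched its ``$O(\delta)$'' remainder bound is wrong (with truncation at $\delta x_N$ the Taylor remainder must instead be controlled through the tail, using $\frac{x_N}{N}t\le\delta^{1-\alpha}\frac{x_N^{2-\alpha}}{N}t^\alpha=o(t^\alpha)$ for $t\le\delta x_N$), while your closing orthant argument correctly supplies the ``in particular'' statement that the paper leaves implicit.
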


\begin{remark}
If $\bX$ satisfies \Cref{def:multiSE} with rate $(\calJ, \alpha)$ and Assumption \ref{assu-LT} then for large $t$,
\[
\PP(\|\bX\|\ge t)\le k\max_{1\le j\le k}\PP\big(|X_j|\ge \frac{t}{\sqrt{k}}\big)
\le k\max_{j}\big\{\PP\big(X_j\ge \frac{t}{\sqrt{k}}\big)+\PP\big(X_j\le - \frac{t}{\sqrt{k}}\big)\big\}
\le \exp(-c' t^\alpha)
\]
for some $c'>0$. Here the last inequality follows by Lemma \ref{lem:finite-second}. Thus, the hypothesis of \Cref{thm:MDP} holds.
\end{remark}

\section{Applications}\label{sec: applications}

We start with a condition on the density, which guarantees stretched-exponential tails in the sense of Definition \ref{def:multiSE} in the multivariate case.

\begin{lemma}\label{lem:density-implies-SE}
    Assume $\bY$ with $\EE[\bY]=0$ has Lebesgue density $f$ on $\bbR^k$.
    Assume further that there exist functions $p_1,p_2:\bbR_+^k\to(0,\infty)$, an $\alpha$-homogeneous, good rate function $\overline{\calJ}:\bbR_+^k\to[0,\infty)$ for some $\alpha\in(0,1)$, and $R>0$ such that, for all $\bx\in\bbR^k$ with $\|\bx_+\|\ge R$,
    \[
    p_1(\bx_+)\,e^{-\overline{\calJ}(\bx_+)}
    \ \le\
    f(\bx)
    \ \le\
    p_2(\bx_+)\,e^{-\overline{\calJ}(\bx_+)}.
    \]
    Assume, moreover, that for each compact set $K\subseteq \bbR_+^k$ and each $i\in\{1,2\}$,
    \[
    \lim_{x\to\infty}\frac{1}{x^\alpha}\log\Big(\sup_{\bs\in K} p_i(x\bs)\Big)=0.
    \]
    Then $\bY$ has stretched-exponential tail of rate $(\inf_{\bs\ge (\cdot)}\overline{\calJ}(\bs),\alpha)$ in the sense of Definition~\ref{def:multiSE}.
\end{lemma}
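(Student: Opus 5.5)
Fix $\bt\in\bbR^k$ with at least one strictly positive component, and set $P = \{i : t_i>0\}\neq\emptyset$. The plan is to compute $\PP(\bY\ge x\bt)=\int_{\{\by\ge x\bt\}} f(\by)\,\dd\by$ on the scale $x^\alpha$ by a Laplace-type argument. We sandwich $f$ by the two envelopes $p_i e^{-\overline{\calJ}}$ and use that $\overline{\calJ}$ is $\alpha$-homogeneous, so that $\overline{\calJ}(x\bs)=x^\alpha\overline{\calJ}(\bs)$. The target is
\[
\lim_{x\to\infty}x^{-\alpha}\log\PP(\bY\ge x\bt)=-\inf_{\bs\ge\bt_+}\overline{\calJ}(\bs)=-\calJ(\bt_+).
\]
The rescaled set $\{\by\ge x\bt\}$, after the substitution $\by=x\bs$ and projection onto the positive parts, is $\{\bs_+\ge\bt_+\}$. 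On it $\|x\bs_+\|\ge x\,t_{i}$ for $i\in P$, which exceeds $R$ for large $x$, so the density bounds apply on the whole region.

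\textbf{Lower bound.} Pick any $\bs^*\ge\bt_+$ with $\overline{\calJ}(\bs^*)<\infty$, and fix $\delta>0$. Consider the box
\[
B_x=\{\by:\ x(\bs^*+\delta\bfm 1)\ge \by_j\ge x\bs^*\ \text{for } j\in P',\ y_j\in[0,1]\ \text{otherwise}\},
\]
where $P'$ is the set of coordinates with $s^*_j>0$, or more simply all coordinates taken in $[xs_j^*,x(s^*_j+\delta)]$. This box lies in $\{\by\ge x\bt\}$ and has volume of order $x^k\delta^k$, i.e.\ polynomial in $x$. On $B_x$ we have
\[
f\ge \Big(\inf_{K} p_1(x\,\cdot)\Big)\,e^{-x^\alpha\sup_{\bs\in[\bs^*,\bs^*+\delta\bfm1]}\overline{\calJ}(\bs)}.
\]
Here we need the hypothesis to control the infimum of $p_1$ as well. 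I would either read it as a two-sided statement, or argue with a slight shrinkage of the box; this is a point to check. The remaining obstacle is upper semicontinuity of $\overline{\calJ}$ near $\bs^*$. I would handle it by noting that a good rate function need not be continuous, so instead choose $\bs^*$ as a near-minimiser in the interior of the effective domain. Homogeneity helps here: $\overline{\calJ}(\lambda\bs^*)=\lambda^\alpha\overline{\calJ}(\bs^*)$, and I expect to use some continuity along the box. If continuity fails, I would instead integrate over the box and apply a Laplace lower bound based on the set where $\overline{\calJ}\le\overline{\calJ}(\bs^*)+\eps$. That set has positive measure if the infimum is approached on an open set, which I would justify via homogeneity and monotone rescaling. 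This is the step I expect to be most delicate. Letting $\delta\to0$ then gives $\liminf x^{-\alpha}\log\PP\ge-\inf_{\bs\ge\bt_+}\overline{\calJ}(\bs)$.

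\textbf{Upper bound.} On $\{\by\ge x\bt\}$ we have $f(\by)\le p_2(\by_+)\,e^{-x^\alpha\overline{\calJ}(\by_+/x)}$ and $\overline{\calJ}(\by_+/x)\ge\calJ(\bt_+)$. We split the region into the compact part $\{\|\by_+\|\le xM\}$ and the remainder. On the compact part, the integral is at most
\[
\sup_{K_M}p_2(x\,\cdot)\cdot e^{-x^\alpha\calJ(\bt_+)}\int_{\|\by_+\|\le xM}(\text{negative part}).
\]
The negative coordinates are not confined, so instead I would bound the density integral over the negative directions using the fact that the marginal of $\bY$ restricted to these coordinates is a probability. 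Concretely, I would bound $\PP(\bY\ge x\bt,\ \bY_+\in xK_M)$ by integrating $f$ only over positive coordinates, using a slice argument, or more cleanly by using the assumed bound on all $\bx$ with $\|\bx_+\|\ge R$ together with Fubini. For the remainder $\|\bY_+\|\ge xM$, goodness of $\overline{\calJ}$ gives $\inf_{\|\bs\|\ge M}\overline{\calJ}(\bs)=M^\alpha\inf_{\|\bs\|=1}\overline{\calJ}>\calJ(\bt_+)$ for large $M$, provided $\overline{\calJ}>0$ on the sphere, which follows from goodness and homogeneity. A dyadic shell decomposition applying the compact bound on each shell $K=\{2^m\le\|\bs\|\le2^{m+1}\}$ then makes this part negligible. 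Combining the two parts yields the matching upper bound. Finally, lower semicontinuity and non-degeneracy of $\calJ=\inf_{\bs\ge\cdot}\overline{\calJ}$ follow from goodness, since the infimum over the closed set $\{\bs\ge\bt\}$ is attained, together with the positivity of $\overline{\calJ}$ away from $\bfm0$.
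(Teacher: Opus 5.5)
The gap is in your lower bound, at exactly the step you flag, and the remedy you sketch cannot close it. Lower semicontinuity, $\alpha$-homogeneity and goodness give no upper control of $\overline{\calJ}$ near $\bs^*$ except along the ray $\{\lambda\bs^*:\lambda>0\}$. That ray is Lebesgue-null for $k\ge 2$, and $\overline{\calJ}$, unlike $\calJ$, is not monotone. So homogeneity and rescaling never produce a set of positive measure on which $\overline{\calJ}\le\overline{\calJ}(\bs^*)+\varepsilon$, and pointwise density bounds cannot see $\overline{\calJ}$ on null sets.

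Concretely, for $k=2$ let $\overline{\calJ}(\bs)=\|\bs\|^\alpha$ if $s_1=s_2$ and $\overline{\calJ}(\bs)=2\|\bs\|^\alpha$ otherwise. This function is lower semicontinuous, $\alpha$-homogeneous and good, and $\inf_{\bs\ge(1,1)}\overline{\calJ}(\bs)=2^{\alpha/2}$. Since the diagonal is null, the upper envelope gives $f\le p_2\,e^{-2\|\bx\|^\alpha}$ a.e.\ on $\{\bx\in\bbR^2_+:\|\bx\|\ge R\}$. Hence, say for constant $p_2$, $\PP(\bY\ge x(1,1))\le\exp(-2^{1+\alpha/2}x^\alpha(1+o(1)))$, which is incompatible with the lower bound $-2^{\alpha/2}$ you want.

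An extra hypothesis is therefore needed. Continuity of $\overline{\calJ}$ is the natural one, and it holds for the Gaussian-power $\overline{\calJ}$. Under it your box argument goes through as written and coincides with the paper's lower bound. The paper's proof makes the same leap: it picks a continuity point $\bs^\delta\ge\bt_+$ with $\overline{\calJ}(\bs^\delta)\le m(\bt)+\delta$, arguing that an lsc function is continuous on a dense set. But density of continuity points says nothing about their values, and in the example every continuity point in $\{\bs\ge(1,1)\}$ has value above $2^{1+\alpha/2}$.

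The rest is sound or easily repaired.
\begin{itemize}
\item You are right that the hypothesis only controls $\sup_K p_1$; the paper silently uses $\inf_{\bu\in B^\delta}p_1(x\bu)$. No two-sided reading is needed, though. Taking $K=\{\bu\}$ gives $x^{-\alpha}\log p_1(x\bu)\to0$ for every $\bu$. Fatou's lemma then shows that $\{\bu\in B:\ p_1(x\bu)\ge e^{-\eta x^\alpha}\}$ has measure at least $\vol(B)/2$ for large $x$, which is enough.
\item In the upper bound, your worry about unconfined negative coordinates is unfounded. On $\{\by\ge x\bt\}$ each $y_j\ge xt_j$, so $\{\by\ge x\bt,\ \|\by_+\|\le Mx\}$ is a box of volume $O(x^k)$, essentially the paper's $D_{M,x}$. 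No marginal or Fubini argument is needed.
\item Your dyadic shells for the remainder work once you get uniformity over shells. Apply the $p_2$ hypothesis to the single compact shell $\{\bs\in\bbR^k_+:1\le\|\bs\|\le2\}$ at scale $2^mx$. Kept inside $\{\by\ge x\bt\}$, this is cleaner than the paper's step. The paper bounds $\PP(\bY\notin K_{M,x})$ unconditionally and so needs left-tail control that is not among the lemma's hypotheses.
\item Your final check that $\calJ$ is lower semicontinuous and non-degenerate is correct, and the paper does not carry it out.
\end{itemize}
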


\subsection{Absolute powers of multivariate Gaussians}
We write $\mathcal{N}(\bfm 0,\bSigma)$ for the centered $k$-dimensional Gaussian distribution on $\bbR^k$ with covariance matrix $\bSigma$. Let \(\bY\sim\mathcal N(\bfm 0,\bSigma)\) on \(\bbR^k\) with \(\bSigma\) positive definite. Fix \(q>1\). Define \(\bZ=(|Y_1|^q,\ldots,|Y_k|^q)\).

\begin{lemma}\label{lem:power-gaussian-density}
 The random vector \(\bZ=(|Y_1|^q,\ldots,|Y_k|^q)\) defined above has density given by
    \[
    f_{\bZ}(\bz)=\frac{1}{(2\pi)^{k/2}(\det\bSigma)^{1/2}\,q^k\prod_{i=1}^k z_i^{(q-1)/q}}
    \sum_{\beps\in\{-1,1\}^k}
    \exp\Big(-\frac12\,(\beps\odot \bz^{1/q})^\top\bSigma^{-1}(\beps\odot \bz^{1/q})\Big),
    \]
    for any $\bz\in (0,\infty)^k$ (otherwise zero). In particular,
    \begin{equation}\label{powersfit}
    \log f_{\bZ}(\bz)\ \sim\ -\,\min_{\beps\in\{-1,1\}^k}\ \frac12\,(\beps\odot \bz^{1/q})^\top\bSigma^{-1}(\beps\odot \bz^{1/q})
    \quad (\|\bz_+\|\to\infty).
    \end{equation}
\end{lemma}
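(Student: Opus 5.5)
The density formula follows from a change of variables, and the asymptotic \eqref{powersfit} follows by comparing the size of the exponent with the polynomial prefactor and the finite sum.

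\textbf{Density formula.} For $\bz\in(0,\infty)^k$, the event $\{Z_i\le z_i \text{ for all } i\}$ equals $\{|Y_i|\le z_i^{1/q} \text{ for all } i\}$. This region is the union over sign patterns $\beps\in\{-1,1\}^k$ of the orthant pieces $\{\beps\odot\by\ge \bfm 0,\ |y_i|\le z_i^{1/q}\}$, which overlap only on a Lebesgue-null set. On each orthant the map $\by\mapsto(|y_1|^q,\dots,|y_k|^q)$ is a $C^1$ diffeomorphism onto $(0,\infty)^k$. Its inverse is $\bz\mapsto \beps\odot\bz^{1/q}$, with Jacobian determinant of absolute value $\prod_i \frac1q z_i^{1/q-1}=q^{-k}\prod_i z_i^{-(q-1)/q}$. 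Summing the transformed Gaussian density $(2\pi)^{-k/2}(\det\bSigma)^{-1/2}\exp(-\frac12 \by^\top\bSigma^{-1}\by)$ over the $2^k$ branches gives exactly the stated formula. Since $\bZ\ge\bfm 0$, the density vanishes off $[0,\infty)^k$, and the boundary has measure zero.

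\textbf{Asymptotic \eqref{powersfit}.} Write $Q(\bz):=\min_{\beps}\frac12(\beps\odot\bz^{1/q})^\top\bSigma^{-1}(\beps\odot\bz^{1/q})$.

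\emph{Sandwich for the sum.} The sum lies between $e^{-Q(\bz)}$ and $2^k e^{-Q(\bz)}$. Hence
\[
\log f_\bZ(\bz)=-Q(\bz)+\log C-\tfrac{q-1}{q}\sum_i\log z_i+O(1),
\]
with a bounded error in $[0,k\log 2]$.

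\emph{Growth of $Q$.} Positive definiteness gives $Q(\bz)\ge \frac{1}{2\lambda_{\max}(\bSigma)}\|\bz^{1/q}\|^2\ge c\|\bz\|^{2/q}$ for some $c>0$. This uses $\sum_i z_i^{2/q}\ge (\max_i z_i)^{2/q}$, which is comparable to $\|\bz\|^{2/q}$. So $Q(\bz)\to\infty$ polynomially as $\|\bz\|\to\infty$, while the $O(1)$ term and $\log C$ are negligible.

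\emph{Main obstacle.} The difficulty is the prefactor term $\sum_i \log z_i$. It is $O(\log\|\bz\|)$ from above, but it is unbounded from below, with $-\log z_i\to+\infty$ as a coordinate $z_i\to0$ while $\|\bz\|\to\infty$. I would handle this as follows.

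\emph{Large coordinates.} For coordinates with $z_i\ge1$, $\log z_i\le\log\|\bz\|=o(\|\bz\|^{2/q})$.

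\emph{Small coordinates.} For coordinates with $z_i<1$, the term $-\frac{q-1}{q}\log z_i$ is positive and unbounded. Therefore \eqref{powersfit} must be read either for $\bz$ with all components bounded away from zero, or in the weaker logarithmic sense that actually serves Lemma~\ref{lem:density-implies-SE}. In the latter form one takes
\[
p_1(\bz)=C\prod_i z_i^{-(q-1)/q},\qquad p_2(\bz)=2^k C\prod_i z_i^{-(q-1)/q},
\]
and checks that $x^{-\alpha}\log\sup_{\bs\in K}p_i(x\bs)\to0$ with $\alpha=2/q$. The $\prod_i z_i^{-(q-1)/q}$ factor contributes only $O(\log x)$ on compact sets away from coordinate axes. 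On the axes the singularity is integrable and is handled by integration rather than pointwise.

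So the plan is to prove the pointwise statement \eqref{powersfit} on regions where $\min_i z_i$ stays bounded below (or grows), which are the regions relevant for the tail events $\{\bZ\ge x\bt\}$ with $\bt>0$. There the ratio $\log f_\bZ(\bz)/(-Q(\bz))\to1$ follows from the three estimates above, and I would note that $\overline{\calJ}=Q$ is $2/q$-homogeneous.
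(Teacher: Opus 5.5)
Your derivation of the density formula is the paper's argument done in one step. The paper quotes the folded-normal density of $\bU=(|Y_1|,\dots,|Y_k|)$ from \cite{chakraborty2013multivariate}, which is a sum over $\beps\in\{-1,1\}^k$ of the Gaussian density at $\beps\odot\bu$, i.e.\ your orthant decomposition. It then pushes this through the diffeomorphism $\bu\mapsto\bu^q$ of $(0,\infty)^k$, with the same Jacobian $q^{-k}\prod_i z_i^{-(q-1)/q}$.

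For \eqref{powersfit} the paper argues only that the prefactor is polynomial and the sum has finitely many terms. Your objection to exactly this point is correct: $\prod_i z_i^{-(q-1)/q}$ is not polynomially bounded on $\{\|\bz_+\|\ge R\}$, since single coordinates may tend to $0$. For instance, take $k\ge 2$ and $\bz^{(n)}:=(n,e^{-n^2},1,\dots,1)$. With your constant $C$, your sandwich gives
\[
\log f_\bZ(\bz^{(n)})\ \ge\ \log C+\tfrac{q-1}{q}\,(n^2-\log n)-Q(\bz^{(n)}),\qquad Q(\bz^{(n)})=O(n^{2/q}).
\]
Hence $\log f_\bZ(\bz^{(n)})\to+\infty$ while $-Q(\bz^{(n)})\to-\infty$, for every $q>1$. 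In addition, $\log f_\bZ=-\infty$ at points with a nonpositive coordinate, and the limit $\|\bz_+\|\to\infty$ does not exclude those either. So \eqref{powersfit} is false as literally stated.

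What your three estimates prove is the correct version. They are the factor-$2^k$ sandwich, $Q(\bz)\ge c\|\bz\|^{2/q}$, and $\log z_i\le\log\|\bz\|$ for the large coordinates. Together they give the asymptotic on $(0,\infty)^k$ under $\min_i z_i\ge c>0$, or more generally under $\log(1/\min_i z_i)=o(\|\bz\|^{2/q})$. This covers the orthants $\{\bz\ge x\bt\}$ with $\bt\in(0,\infty)^k$.

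Your caveat about the axes is also warranted for the later use of Lemma~\ref{lem:density-implies-SE}. Any $p_2$ with $f_\bZ\le p_2e^{-\overline{\calJ}}$ must be unbounded near the coordinate hyperplanes. So that lemma's hypothesis on $\sup_{\bs\in K}p_2(x\bs)$ fails for compact $K\subseteq\bbR_+^k$ meeting them. Such $K$ cannot be avoided, since Definition~\ref{def:multiSE} also requires $\bt$ with zero components. The singularity therefore has to be integrated out directly rather than bounded pointwise. That, however, concerns the application rather than this lemma.
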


\noindent For \(k=2\) and \(\bSigma=\begin{pmatrix}1&\rho\\ \rho&1\end{pmatrix}\) with \(\rho\in(-1,1)\), Lemma \ref{lem:power-gaussian-density} gives,
\begin{align*}
f_{\bZ}(z_1,z_2)
&=\frac{2}{\pi q^2\sqrt{1-\rho^2}}\,z_1^{1/q-1}z_2^{1/q-1}
\exp\Big(-\frac{z_1^{2/q}+z_2^{2/q}}{2(1-\rho^2)}\Big)\,
\cosh\Big(\frac{\rho\,z_1^{1/q}z_2^{1/q}}{1-\rho^2}\Big).
\end{align*}
Thus, there exist functions \(p_1,p_2>0\) as in Lemma \ref{lem:density-implies-SE} such that, with
\begin{equation}\label{eq:density-bounds-J}
\overline{\calJ}(\bz)=\min_{\beps\in\{-1,1\}^k}\ \frac12\,(\beps\odot \bz^{1/q})^\top\bSigma^{-1}(\beps\odot \bz^{1/q})
\end{equation}
we have 
$$
p_1(\bz)e^{-\overline{\calJ}(\bz)}\le f_{\bZ}(\bz)\le p_2(\bz)e^{-\overline{\calJ}(\bz)}
$$
for all large enough \(\|\bz_+\|\). 

For any symmetric positive semidefinite matrix $\bA$, we denote by $\bA^+$ its \emph{Moore-Penrose pseudoinverse}. In expressions of the form $\bx^\top \bA^+ \bx$, we adopt the convention that the value is $+\infty$ if $\bx \not\in \operatorname{range}(\bA)$.

\begin{theorem}\label{thm:abs-powers-gauss}
    Let $\bG_1,\bG_2,\ldots$ be i.i.d.\ $\mathcal N(\bfm 0,\bSigma)$ in $\bbR^k$, where $\bSigma$ is positive semidefinite. Fix $q>2$. Set
    \[
    M_q=\EE|G|^q=\frac{2^{q/2}\Gamma\big((q+1)/2\big)}{\sqrt\pi},
    \qquad G\sim\mathcal N(0,1).
    \]
    Define
    \[
    \bX_i:=\big(|G_{i,1}|^q-\bSigma_{11}^{q/2}M_q,\ldots,|G_{i,k}|^q-\bSigma_{kk}^{q/2}M_q\big)\in\bbR^k,\qquad i\ge 1.
    \]
    Then for every $\bt\in(0,\infty)^k$,
    \[
    \lim_{N\to\infty}\frac{1}{N^{2/q}}
    \log\PP\Big(\frac1N\sum_{i=1}^N\bX_i\ge \bt\Big) = -\calI_{\calJ}(\bt),
    \]
    where\[
    \calI_{\calJ}(\bt)
    :=\min\Big\{\sum_{r=1}^k \inf_{\bs\ge \bt^{(r)}}\overline{\calJ}(\bs):
    \bt^{(r)}\in\bbR_+^k,\ \sum_{r=1}^k \bt^{(r)}=\bt\Big\},
    \]
    and
    \[
    \overline{\calJ}(\bz)
    :=\frac12\min_{\beps\in\{-1,1\}^k}
    (\beps\odot \bz^{1/q})^\top\bSigma^+(\beps\odot \bz^{1/q}),
    \qquad \bz\in\bbR_+^k.
    \]
\end{theorem}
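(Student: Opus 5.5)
The plan is to reduce Theorem~\ref{thm:abs-powers-gauss} to Theorem~\ref{thm:LDP-multi} with $\alpha=2/q\in(0,1)$ (this is where $q>2$ is used) and $x_N=N$. Note $x_N N^{-1/(2-\alpha)}=N^{(1-\alpha)/(2-\alpha)}\to\infty$, and $\frac1{x_N^\alpha}=N^{-2/q}$. It then suffices to check three things: that $\bX_1$ satisfies Assumption~\ref{assu-LT}, that it satisfies Definition~\ref{def:multiSE} with $\calJ(\bt)=\inf_{\bs\ge\bt}\overline{\calJ}(\bs)$ and $\overline{\calJ}$ built from $\bSigma^+$, and that this $\calJ$ is non-degenerate. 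Assumption~\ref{assu-LT} is trivial, since $X_{1,j}\ge -\bSigma_{jj}^{q/2}M_q$ is bounded below. Centering is by the definition of $M_q$. The deterministic shift $\bm:=(\bSigma_{jj}^{q/2}M_q)_j$ is harmless. Indeed, $\PP(\bZ-\bm\ge x\bt)$ lies between $\PP(\bZ\ge x(\bt+\eps\bfm 1))$ and $\PP(\bZ\ge x(\bt-\eps\bfm1))$ for large $x$. So once the limit for $\bZ$ is known and continuous in $\bt$, the same limit holds for $\bX_1$.

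\textbf{Positive definite case.} Here I would apply Lemma~\ref{lem:density-implies-SE} via Lemma~\ref{lem:power-gaussian-density}. The density of $\bZ$ is bounded above and below by $p_i(\bz)e^{-\overline{\calJ}(\bz)}$, where $\overline{\calJ}$ is the $2/q$-homogeneous function in \eqref{eq:density-bounds-J}. For the lower bound, keep only the minimizing $\beps$ term. For the upper bound, multiply by $2^k$. The prefactors are $\prod z_i^{-(q-1)/q}$ times constants. The complication is that they blow up when some $z_i\to0$ while $\|\bz\|$ is large, so the sup over compacts in the Lemma's hypothesis fails near the boundary. I would handle this by proving the tail limit directly instead of literally invoking the lemma. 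Write $\PP(\bZ\ge x\bt)=\PP(\bY\in A_x)$, where $A_x=\{\by:|y_i|\ge (xt_i)^{1/q}\}=x^{1/q}A_1$ for $t_i\ge0$. Then apply the Gaussian large deviation (Laplace) principle: $\log\PP(\bY\in x^{1/q}A)\sim -x^{2/q}\inf_{\by\in A}\frac12\by^\top\bSigma^{-1}\by$. The set $A$ is a finite union of closed orthant-type regions whose closure equals the closure of its interior, so upper and lower bounds agree. Substituting $\by=\beps\odot\bs^{1/q}$ shows that this infimum equals $\inf_{\bs\ge\bt}\overline{\calJ}(\bs)=\calJ(\bt)$. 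Lower semicontinuity (indeed continuity) and non-degeneracy follow, because $\bSigma^{-1}$ is positive definite and $\bt\ne0$ forces some $|y_i|>0$.

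\textbf{Singular case.} This is the main obstacle. Here $\bY=\bSigma^{1/2}\bW$ with $\bW$ standard on $\mathrm{range}(\bSigma)$, and Schilder/Cram\'er for Gaussians gives the rate $\frac12\by^\top\bSigma^+\by$ on $\mathrm{range}(\bSigma)$ and $+\infty$ off it. This matches the stated convention. Now the set $A\cap\mathrm{range}(\bSigma)$ must be checked for the interior-closure regularity within the subspace. If some $\bSigma_{jj}=0$, then $Y_j\equiv0$ and the rate is $+\infty$ for $t_j>0$, which is consistent with $\calJ(\bt)=\infty$. But Definition~\ref{def:multiSE} requires finite $\calJ$, so I would check whether Theorem~\ref{thm:LDP-multi}'s proof tolerates infinite values, or treat such components separately: the probability is then zero and the minimization gives $\infty$. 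For regularity, I would argue that the infimum over $\{|y_i|\ge a_i\}\cap V$ equals the infimum over $\{|y_i|>a_i\}\cap V$, using the scaling $\by\mapsto(1+\delta)\by$ and continuity of the quadratic form. This scaling trick works uniformly in both cases, so I expect it settles regularity. With all hypotheses verified, Theorem~\ref{thm:LDP-multi} gives exactly the stated formula for $\calI_\calJ$.
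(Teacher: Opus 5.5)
Your reduction is correct, but it takes a different route from the paper in both cases.

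\emph{What the paper does.} For positive definite $\bSigma$, it feeds the explicit density from Lemma~\ref{lem:power-gaussian-density} into the density criterion Lemma~\ref{lem:density-implies-SE}. For singular $\bSigma$ it never verifies Definition~\ref{def:multiSE}. Instead it perturbs to $\bSigma+\delta\Id_k$ and couples $\bG_i^{(\delta)}=\bH_i+\sqrt{\delta}\,\bZ_i$. It then proves exponential equivalence of the two empirical means at speed $N^{2/q}$ and lets $\delta\downarrow0$ using $\calI_{\calJ_\delta}\uparrow\calI_{\calJ}$.

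\emph{What you do.} You compute the single-vector orthant tail directly from the finite-dimensional Gaussian LDP for $\lambda^{-1}\bY$. Obtained by contraction from the standard Gaussian, its rate is $\frac12\by^\top\bSigma^+\by$ on $\operatorname{range}(\bSigma)$ and $+\infty$ off it. You match the open and closed infima via $\by\mapsto(1+\delta)\by$, absorb the centering $\bmu_q$ by a sandwich, and apply Theorem~\ref{thm:LDP-multi} once.

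\emph{What your route buys.} It is shorter and uniform in $\bSigma$. It avoids two delicate steps in the paper. First, the hypotheses of Lemma~\ref{lem:density-implies-SE} are not literally satisfied here: as you noticed, the prefactor $\prod_i z_i^{-(q-1)/q}$ is unbounded near the coordinate hyperplanes. Second, passing $\delta\downarrow0$ inside the infimum defining $\calI_{\calJ}$ requires a compactness argument.

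\emph{What it costs.} You must check Definition~\ref{def:multiSE} for a degenerate Gaussian, so state explicitly a fact you only gesture at. If $\bSigma_{jj}>0$ for all $j$, then each $\{y_j=0\}$ is a proper hyperplane of $\operatorname{range}(\bSigma)$. Hence the range contains a vector $\mathbf w$ with all coordinates nonzero. Scaling $\mathbf w$ shows $\calJ<\infty$. Perturbing a minimiser along $\mathbf w$ gives continuity of $\calJ$ at boundary points $\bt$ with some $t_j=0$. Your sandwich needs this continuity, because Definition~\ref{def:multiSE} must hold for such $\bt$ too, and in the singular case the coordinates cannot be moved independently. The case $\bSigma_{jj}=0$ is trivial, as you say.

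\emph{What the paper's route buys.} In exchange for its detour, it only ever computes tails in the positive definite setting, where its general density criterion is meant to apply. It also needs no separate treatment of degenerate coordinates.
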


\begin{corollary}\label{cor:rho-threshold}
    Let \(k=2\) with \(\bSigma=\begin{pmatrix}1&\rho\\ \rho&1\end{pmatrix}\), \(\rho\in(-1,1)\). Then, for any \(t_1,t_2>0\),
    \begin{align*}
    \lim_{N\to\infty}\frac{1}{N^{2/q}}
    &\log\PP \Big(\frac1N\sum_{i=1}^N\big(|G_{i,1}|^q-M_q,|G_{i,2}|^q-M_q\big)\ge (t_1,t_2)\Big)\\
    &= -\calI_{\calJ}(t_1,t_2),
    \end{align*}
    where
    \[
    \overline{\calJ}(z_1,z_2)
    :=\frac{z_1^{2/q}+z_2^{2/q}-2|\rho|\,z_1^{1/q}z_2^{1/q}}{2(1-\rho^2)},
    \qquad (z_1,z_2)\in\bbR_+^2.
    \]
\end{corollary}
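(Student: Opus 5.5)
The corollary is the specialization of Theorem~\ref{thm:abs-powers-gauss} to $k=2$ with $\bSigma=\begin{pmatrix}1&\rho\\ \rho&1\end{pmatrix}$. Since $\rho\in(-1,1)$, $\bSigma$ is positive definite, so $\bSigma^+=\bSigma^{-1}$ and no range conventions are needed. Moreover $\bSigma_{11}=\bSigma_{22}=1$, so the centering constants $\bSigma_{jj}^{q/2}M_q$ reduce to $M_q$. This matches the vectors $\bX_i=(|G_{i,1}|^q-M_q,|G_{i,2}|^q-M_q)$ in the statement.

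We also assume $q>2$, as in Theorem~\ref{thm:abs-powers-gauss}, where the speed $N^{2/q}$ corresponds to $\alpha=2/q\in(0,1)$. The limit then follows directly from the theorem, with rate function $\calI_\calJ$ built from the theorem's $\overline{\calJ}$. What remains is to show that this $\overline{\calJ}$ coincides with the explicit formula in the corollary.

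For this, compute
\[
\bSigma^{-1}=\frac{1}{1-\rho^2}\begin{pmatrix}1&-\rho\\ -\rho&1\end{pmatrix}.
\]
For $\beps=(\eps_1,\eps_2)$ and $\bz\in\bbR_+^2$ this gives
\[
(\beps\odot\bz^{1/q})^\top\bSigma^{-1}(\beps\odot\bz^{1/q})=\frac{z_1^{2/q}+z_2^{2/q}-2\rho\,\eps_1\eps_2\,z_1^{1/q}z_2^{1/q}}{1-\rho^2},
\]
using $\eps_j^2=1$. The expression depends on $\beps$ only through the product $\eps_1\eps_2\in\{-1,1\}$. Because $z_1^{1/q}z_2^{1/q}\ge0$ and $1-\rho^2>0$, the minimum over $\beps$ is attained by choosing $\eps_1\eps_2=\operatorname{sign}(\rho)$ (either choice works if $\rho=0$). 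This makes the cross term equal to $-2|\rho|z_1^{1/q}z_2^{1/q}$. Multiplying by $\tfrac12$ yields exactly the stated $\overline{\calJ}(z_1,z_2)$.

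The only point that needs care is the sign optimization, which is elementary here. No step is a real obstacle.
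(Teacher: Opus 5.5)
Your proposal is correct and matches the paper's proof, which simply says the corollary is the special case $k=2$ of Theorem~\ref{thm:abs-powers-gauss}. You also spell out what the paper leaves implicit: $\bSigma^+=\bSigma^{-1}$, the centering constants reduce to $M_q$, $q>2$ is inherited from the theorem, and the minimization over $\beps$ comes down to choosing $\eps_1\eps_2=\operatorname{sign}(\rho)$, which yields the $-2|\rho|$ cross term.
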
    
    
\noindent Interestingly, in this result, the rate function $\calI_{\calJ}$ is neither convex nor concave (see Lemma \ref{lem:I_not_convex_nor_concave_q_gt_2_rho_pos}).

\subsection{Geometry of high-dimensional \texorpdfstring{$\ell_p^N$}{lpN}-balls}
This section uses our analysis of multivariate stretched-exponential vectors to study the behavior of high-dimensional objects. We will see that an analogue to light- and heavy-tailed distributions exists for high-dimensional objects. A concrete class of high-dimensional objects is the family of \(\ell_p^N\)-balls, the unit balls in \(\bbR^N\) for the usual \(\ell_p\)-norm. This norm is given by
\[
\|\bx\|_p =
\begin{cases}
  \left(\sum_{i=1}^N |x_i|^p \right)^{1/p}, & \text{if } p \in [1, \infty), \\
  \max\{|x_1|, \ldots, |x_N|\}, & \text{if } p = \infty,
\end{cases}
\]
and $\BB_p^N = \{ \bx \in \bbR^N : \|\bx\|_p \le 1 \}$. Even though this class is only parametrized by the value of $p$, it gives rise to interesting and unexpected behavior in high dimensions and is an active area of research (see for instance \cite{AlonsoGutierrezProchnoThaele2018, GantertKimRamanan2017, KabluchkoProchnoThaele2019, KabluchkoProchnoThaele2021,kabluchko2024strange,kim2018conditional, prochno2024limit} or \cite{prochno2024large}).

For a matrix \(\bM\) write \(\bM^\top\) for the transpose. We denote by $\operatorname{Id}_m$ the $m$-dimensional identity matrix. For \(N\ge m\) the \emph{Stiefel manifold} is defined as
\[
\bbV_{m,N}:=\Big\{\bM\in\bbR^{m\times N}:\bM\bM^\top=\operatorname{Id}_m\Big\}
\]
and carries a unique Haar probability measure, denoted \(\operatorname{Unif}(\bbV_{m,N})\), invariant under left and right multiplication by orthogonal matrices. Elements of \(\bbV_{m,N}\) correspond, up to an orthogonal transformation, to orthogonal projections. More precisely, for any convex body \(C\subseteq\bbR^N\) and $\bV\in\bbV_{m,N}$,
\[
\bV C=\bV\big\vert_E\,(C\big| E),
\]
where \(E=\operatorname{Range}(\bV^\top)\) and \(C\big| E\) is the orthogonal projection of \(C\) onto \(E\). If \(\bx\in E\), then \(\bx=\sum_{i=1}^m\lambda_i\bv_i\) for some scalars \(\lambda_i\), where \(\{\bv_i\}_{i=1}^m\) are the orthonormal rows of \(\bV\). Hence, \(\bV\big\vert_E\,\bx=(\lambda_1,\ldots,\lambda_m)\), so \(\bV\big\vert_E\) maps the orthonormal basis \(\{\bv_i\}_{i=1}^m\) to the standard basis of \(\bbR^m\). In particular, for any convex body \(C\subseteq\bbR^N\),
\[
\operatorname{vol}_m(\bV C)\ \overset{d}{=}\ \operatorname{vol}_m(C\big| E),
\]
with \(\bV\sim \operatorname{Unif}(\bbV_{m,N})\) and \(E\) a uniformly chosen \(m\)-dimensional subspace of \(\bbR^N\), i.e. according to the Haar measure. The \emph{support function} of a convex body \(C\subseteq\bbR^m\) is
\[
h(C,\bu):=\sup_{\bx\in C}\langle \bx,\bu\rangle,\qquad \bu\in\bbS^{m-1},
\]
and determines \(C\) uniquely. We write \(q\) for the H\"older conjugate of \(p\), such that \(1/p+1/q=1\). It is known that the support function $h(N^{1/p-1/2}\bV_N\bbB_p^N,\cdot)$ converges almost surely in the uniform norm to the constant $\sqrt{2}\,\Gamma\big(\frac{q+1}{2}\big)^{1/q}\pi^{-1/(2q)}$ for fixed $m$ as $N\to\infty$ which is the support function of the Euclidean ball of radius $\sqrt{2}\,\Gamma\big(\frac{q+1}{2}\big)^{1/q}\pi^{-1/(2q)}$ (see \cite{prochno2024limit}). We will quantify this convergence and give rate function as well as the speed of the convergence (which is of different nature for $p\in(1,2)$ and $p\in(2,\infty]$) in the following. We show (see below) that for every
\[
f\in C \Big( \bbS^{m-1},\ \Big[\frac{\sqrt{2}\,\Gamma\big(\frac{q+1}{2}\big)^{1/q}}{\pi^{1/(2q)}},\,\infty\Big)\Big),
\]
one has for \(p\in(1,2)\),
\[
\PP\big(h(N^{1/p-1/2}\bV_N\bbB_p^N,\cdot)\ge f\big)\approx \exp\big(-N^{2/q}\,\calJ_p(f)\big),
\]
and for \(p\in(2,\infty]\),
\[
\PP\big(h(N^{1/p-1/2}\bV_N\bbB_p^N,\cdot)\ge f\big)\approx\exp\big(-N\,\calI_p(f)\big),
\]
for certain rate functions \(\calJ_p\) and \(\calI_p\). Here \(\calI_p\) arises from transformations of the Legendre--Fenchel transform in Cram\'er's theorem, while \(\calJ_p\) is a transformed quadratic form which originates from Theorem \ref{thm:LDP-multi}. Since support functions determine convex bodies, these asymptotics encode all geometric information. Thus, for large \(N\), the speed at which the rescaled projections \(N^{1/p-1/2}\bV_N\bbB_p^N\) approach the Euclidean ball of radius \(\sqrt{2}\,\Gamma\big(\frac{q+1}{2}\big)^{1/q}\pi^{-1/(2q)}\) is of order \( \exp(-c\,N^{2/q})\) for \(p\in(1,2)\) with explicit \(c>0\), and depends on the shape of \(\bbB_p^N\). As \(p\) is closer to $1$ the speed decreases, while for \(p\) being closer to $2$ it increases. In contrast, for \(p\in(2,\infty]\) the speed is always of order \(\exp(-c\,N)\) with a constant \(c>0\) where the speed does not depend on the shape of \(\bbB_p^N\).

We begin with an extension of the result for absolute powers of multivariate Gaussians. On the large deviation scale at speed $N^{2/q}$, the columns of a Haar-Stiefel matrix behave like standard Gaussian vectors.

\begin{theorem}\label{thm: LDP for finite stiefel evals}
Let \(\bV_N\sim \operatorname{Unif}(\bbV_{m,N})\) and let \(\bv_1,\ldots,\bv_N\) be the columns of \(\bV_N\). Fix \(k\in\bbN\), \(q>2\), \(t_1,\ldots,t_k>0\), and pairwise distinct \(\bu_1,\ldots,\bu_k\in\bbS^{m-1}\). Then
\begin{align*}
&\lim_{N\to\infty}\frac{1}{N^{2/q}}\log \PP\Big(
\frac{1}{N}\sum_{i=1}^N |\langle \sqrt{N}\,\bv_i,\bu_1\rangle|^q - M_q \ge t_1,\ \ldots,\
\frac{1}{N}\sum_{i=1}^N |\langle \sqrt{N}\,\bv_i,\bu_k\rangle|^q - M_q \ge t_k\Big)\\
& = -\calI_{\calJ}(t_1,\ldots,t_k),
\end{align*}
where $\langle \cdot, \cdot \rangle$ denotes the Euclidean inner product,  \(\calI_{\calJ}\) is the rate from Theorem~\ref{thm:abs-powers-gauss} with \(\bSigma = (\langle \bu_i,\bu_j\rangle)_{1\le i,j\le k}\), \(M_q=\EE|G|^q=\frac{2^{q/2}\Gamma(\frac{q+1}{2})}{\sqrt{\pi}}\) for \(G\sim\mathcal{N}(0,1)\).
\end{theorem}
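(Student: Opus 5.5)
The plan is to reduce the statement to Theorem~\ref{thm:abs-powers-gauss} through the standard Gaussian representation of a Haar--Stiefel matrix. Let $\bG=(g_{ij})\in\bbR^{m\times N}$ have i.i.d.\ $\mathcal N(0,1)$ entries. Then $\bV_N\overset{d}{=}(\bG\bG^\top)^{-1/2}\bG$, and the columns satisfy $\sqrt N\,\bv_i=\bW_N\bg_i$ with $\bW_N:=(\bG\bG^\top/N)^{-1/2}$, where $\bg_i$ is the $i$-th column of $\bG$. For fixed $\bu_j$ we have $\langle \bW_N\bg_i,\bu_j\rangle=\langle \bg_i,\bW_N\bu_j\rangle$. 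The vectors $\bY_i:=(\langle\bg_i,\bu_1\rangle,\dots,\langle\bg_i,\bu_k\rangle)$ are i.i.d.\ $\mathcal N(\bfm 0,\bSigma)$ with $\bSigma=(\langle\bu_i,\bu_j\rangle)$, which may be singular (e.g.\ if $k>m$). This is why Theorem~\ref{thm:abs-powers-gauss} was stated for positive semidefinite $\bSigma$ with $\bSigma_{jj}=1$. So if $\bW_N=\Id_m$ exactly, the statement would be Theorem~\ref{thm:abs-powers-gauss} verbatim.

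The key step is to show that $\bW_N$ is superexponentially close to $\Id_m$ on the scale $N^{2/q}$. The entries of $\bG\bG^\top/N$ are averages of $N$ i.i.d.\ variables $g_{ai}g_{bi}$ with finite exponential moments. Cram\'er's theorem (or a Bernstein bound) therefore gives $\PP(\|\bG\bG^\top/N-\Id_m\|>\delta)\le e^{-c(\delta)N}$. Since $2/q<1$, this is negligible at speed $N^{2/q}$, so we may work on the event $A_\delta:=\{\|\bW_N-\Id_m\|\le\delta\}$.

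On $A_\delta$, write $\bw_j=\bW_N\bu_j$, so that $|\bw_j-\bu_j|\le\delta$. Then
\[
\big||\langle\bg_i,\bw_j\rangle|^q-|\langle\bg_i,\bu_j\rangle|^q\big|\le q\,\delta\,\|\bg_i\|\big(\|\bg_i\|(1+\delta)\big)^{q-1}\le C_q\,\delta\,\|\bg_i\|^q .
\]
Hence the $j$-th empirical mean changes by at most $C_q\delta\,\frac1N\sum_i\|\bg_i\|^q$. The next task is to control this sum at scale $N$ at speed $N^{2/q}$: the variable $\|\bg_i\|^q$ has stretched-exponential tail with exponent $2/q$. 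By the one-dimensional Nagaev result (the case $k=1$ of Theorem~\ref{thm:LDP-multi}, applied to the centered variable $\|\bg_i\|^q-\EE\|\bg_i\|^q$), we get
\[
\PP\Big(\tfrac1N\sum_i\|\bg_i\|^q\ge L\Big)\le e^{-c L^{2/q}N^{2/q}}.
\]
For large $L$ this is negligible against any fixed rate.

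These pieces combine into a sandwich. For the upper bound, the target event intersected with $A_\delta\cap\{\frac1N\sum\|\bg_i\|^q\le L\}$ is contained in $\{\frac1N\sum_i\bX_i\ge \bt-C_q\delta L\bfm1\}$, where $\bX_i$ is built from $\bY_i$ as in Theorem~\ref{thm:abs-powers-gauss}. The lower bound is analogous, with $\bt+C_q\delta L\bfm1$. Applying Theorem~\ref{thm:abs-powers-gauss} and letting first $\delta\to0$ and then $L\to\infty$ yields the result, provided $\calI_{\calJ}$ is continuous on $(0,\infty)^k$.

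I expect this continuity to be the main obstacle. I would obtain it from $\alpha$-homogeneity and monotonicity: for $\bs\in(0,\infty)^k$ we have $(1-\eta)\bs\le\bs'\le(1+\eta)\bs$ for all $\bs'$ near $\bs$, so $\calI_{\calJ}(\bs')$ is squeezed between $(1\pm\eta)^{2/q}\calI_{\calJ}(\bs)$. Both properties pass from $\calJ$ to $\calI_{\calJ}$. Finiteness of $\calI_{\calJ}$ in the singular case also needs checking: each coordinate can be realized by its own jump, since $\bSigma_{jj}=1$ makes single-coordinate values finite.
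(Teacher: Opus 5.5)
Your proof is correct. Its lower-bound half is exactly the paper's argument: on $\{\mathbf{A}_N\in B_\delta(\Id_m)\}$ the paper also bounds the difference between the Stiefel and Gaussian empirical means by $C'\delta\,\frac1N\sum_{i}\|\mathbf{g}_i\|_2^q$, and it controls the latter at speed $N^{2/q}$ with one-dimensional stretched-exponential bounds. The paper passes to the coordinates $|g_{i,r}|^q$ via Jensen, whereas you apply the case $k=1$ of Theorem~\ref{thm:LDP-multi} to $\|\mathbf{g}_i\|^q$ directly; that is equally valid, since this variable has a chi-type right tail and a bounded left tail.

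The routes differ in the upper bound. The paper bounds the probability by $\sup_{\mathbf{D}\in B_\eps(\Id_m)}$ of the analogous Gaussian probability, with the deterministic matrix $\mathbf{D}$ in place of $\mathbf{A}_N$. It rewrites each such event as a Gaussian problem with covariance $\bSigma(\mathbf{D})$ and thresholds $\bt^{(\eps)}(\mathbf{D})$, and then appeals to continuity of $(\bSigma,\bt)\mapsto\calI_{\calJ}^{\,\bSigma}(\bt)$, uniformly over $\mathbf{D}$. Your sandwich uses the same pathwise comparison in both directions. You therefore only need continuity of $\calI_{\calJ}$ in $\bt$ for the fixed $\bSigma$, which you correctly obtain from $2/q$-homogeneity and monotonicity.

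This is a genuine simplification. A pathwise inclusion needs no decoupling of $\mathbf{A}_N$ from the $\mathbf{g}_i$; the $\sup_{\mathbf{D}}$ bound would only be immediate if $\mathbf{A}_N$ were independent of the sample. It also avoids continuity of the rate in $\bSigma$, which is delicate when $\bSigma$ is singular, because $\bx^\top\bSigma^+\bx=+\infty$ off $\operatorname{range}(\bSigma)$ and that range moves with $\mathbf{D}$.

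Your finiteness check is also right. An explicit witness is $\bx:=t_j^{1/q}\bSigma\be_j\in\operatorname{range}(\bSigma)$, for which $\bx^\top\bSigma^+\bx=t_j^{2/q}\bSigma_{jj}=t_j^{2/q}$ and $|x_j|^q=t_j$. Hence $\calJ(t_j\be_j)\le t_j^{2/q}/2$ and $\calI_{\calJ}(\bt)\le\frac12\sum_j t_j^{2/q}<\infty$.
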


\noindent The next result gives the large deviation behavior for projected \(\ell_p^N\)-balls for $p\in(1,2)$.

\begin{theorem}\label{thm: supprt function LDP p<2}
Let \(p\in(1,2)\) and \(\bV_N\sim \operatorname{Unif}(\bbV_{m,N})\) for all \(N\ge m\). Let \(q>2\) with \(1/p+1/q=1\), and let \(\{\bu_1,\bu_2,\ldots\}\subseteq\bbS^{m-1}\) be a dense countable set. For every \(f\in C(\bbS^{m-1},[M_q^{1/q},\infty))\),
\begin{align*}
&\lim_{N\to\infty}\frac{1}{N^{2/q}}\log \PP\big(h(N^{1/p-1/2}\bV_N\bbB_p^N,\cdot)\ge f\big) = -\sup_{k\in\bbN} \calJ_k\big(f(\bu_1),\ldots,f(\bu_k)\big),
\end{align*}
where for each $k\in\bbN$ we set
\[
\calJ_k(\bs)
:=\calI_{\calJ^{(k)}}\big((\bs^q-M_q\bfm1)\big),
\qquad \bs\in[M_q^{1/q},\infty)^k,
\]
and $\calI_{\calJ^{(k)}}$ is the rate from Theorem~\ref{thm:abs-powers-gauss} with covariance matrix
$\bSigma^{(k)}:=(\langle \bu_r,\bu_s\rangle)_{1\le r,s\le k}$.
\end{theorem}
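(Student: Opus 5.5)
The support function of the projected ball is explicit. For $\bu\in\bbS^{m-1}$ we have $h(\bV_N\bbB_p^N,\bu)=\sup_{\bx\in\bbB_p^N}\langle \bx,\bV_N^\top\bu\rangle=\|\bV_N^\top\bu\|_q$, with $q$ the H\"older conjugate of $p$. Since $\bV_N^\top\bu=(\langle\bv_i,\bu\rangle)_{i\le N}$, and using $1/p-1/2=1/2-1/q$, we get
\[
h(N^{1/p-1/2}\bV_N\bbB_p^N,\bu)^q=N^{q/2-1}\sum_{i=1}^N|\langle\bv_i,\bu\rangle|^q=\frac1N\sum_{i=1}^N|\langle\sqrt N\bv_i,\bu\rangle|^q .
\]
Hence $\{h\ge f\}=\bigcap_{\bu}\{\frac1N\sum_i|\langle\sqrt N\bv_i,\bu\rangle|^q-M_q\ge f(\bu)^q-M_q\}$. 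Both sides of $h\ge f$ are continuous in $\bu$, so the intersection over all of $\bbS^{m-1}$ equals the intersection over the dense set $\{\bu_1,\bu_2,\ldots\}$. The event is therefore decreasing in $k$ and equals $\bigcap_k A_k$, where $A_k$ is the event of \Cref{thm: LDP for finite stiefel evals} with thresholds $t_r=f(\bu_r)^q-M_q$.

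For the upper bound, $\PP(h\ge f)\le\PP(A_k)$ for every $k$. There is one technicality: \Cref{thm: LDP for finite stiefel evals} requires $t_r>0$, while here $t_r\ge0$. To handle this I would first replace $f$ by $f-\delta$ where needed, or simply drop coordinates with $t_r=0$. Dropping such a coordinate does not change $\calJ_k$, since splitting $0$ costs nothing and $\calJ^{(k)}$ is monotone. Alternatively, I would show continuity of $\calI_{\calJ^{(k)}}$ in $\bt$, which follows from the homogeneity and continuity of $\overline{\calJ}$. This gives $\limsup N^{-2/q}\log\PP(h\ge f)\le-\calJ_k(f(\bu_1),\dots,f(\bu_k))$ for each $k$, and then we take the supremum over $k$.

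The lower bound is the main obstacle: we need a lower estimate for an intersection over infinitely many directions. My plan is to use a finite net together with an exponential-tightness / equicontinuity argument. Fix $\eps>0$ and $g=f+\eps$. By uniform continuity of $g$, choose $k$ such that $\{\bu_1,\dots,\bu_k\}$ is a $\delta$-net of the sphere. On the event $B_N=\{\|\bV_N^\top\|_{2\to q}\text{-type Lipschitz bound}\le L\}$, meaning $|h(\cdot,\bu)-h(\cdot,\bu')|\le L\|\bu-\bu'\|$ with $L$ a fixed large constant, the inequalities $h(\bu_r)\ge g(\bu_r)$ for $r\le k$ imply $h\ge f$ everywhere once $L\delta$ is small compared with $\eps$. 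Then
\[
\PP(h\ge f)\ge \PP(A_k(g))-\PP(B_N^c).
\]
The Lipschitz constant is controlled by $\sup_{\bu}h(\cdot,\bu)\le(\frac1N\sum_i\|\sqrt N\bv_i\|^q)^{1/q}$, since $h$ is itself a norm-like function of $\bu$. Using the Gaussian representation $\bV_N=(\bG\bG^\top)^{-1/2}\bG$, the quantity $\|\sqrt N\bv_i\|^q$ has stretched-exponential tails of order $N^{2/q}$. By the one-dimensional Nagaev bound (the case $k=1$ of \Cref{thm:LDP-multi}), $\PP(B_N^c)\le\exp(-N^{2/q}c(L))$, and $c(L)\to\infty$ as $L\to\infty$. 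Choosing $L$ large makes this term negligible compared with $\exp(-N^{2/q}\calJ_k(g))$.

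Finally, we let $\eps\to0$. This uses continuity of $\calJ_k$ in its argument, and the fact that the supremum over $k$ is attained in the limit; the latter holds because $\calJ_k$ is nondecreasing in $k$, by restricting decompositions. Some care is needed to make the choice of $k$ depend on $\eps$ while still comparing with $\sup_k\calJ_k(f)$. This works because $\calJ_k(g)\le\sup_j\calJ_j(g)$, and $\sup_j\calJ_j(f+\eps)\to\sup_j\calJ_j(f)$ by uniform homogeneity-based continuity: $\calJ_j((1+\eta)\bs)$ is comparable to $(1+\eta)^{2/q}\calJ_j(\bs)$, uniformly in $j$.
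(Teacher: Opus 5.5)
Your upper bound is the paper's argument: the event shrinks in $k$, and then Theorem~\ref{thm: LDP for finite stiefel evals} is applied for each fixed $k$. The zero-threshold issue you raise is one the paper passes over. Dropping the coordinates with $t_r=0$ is the right fix; replacing $f$ by $f-\delta$ goes the wrong way. The reason the rate is unchanged is not monotonicity, though. With $U:=(\bu_1,\ldots,\bu_k)\in\bbR^{m\times k}$ and $\bSigma^{(k)}=U^\top U$ one checks
\[
\calJ^{(k)}(\bt)=\tfrac12\inf\big\{\|\by\|^2:\ \by\in\bbR^m,\ |\langle\by,\bu_r\rangle|^q\ge t_r\ \text{for all } r\le k\big\}.
\]
So a constraint with $t_r=0$ is vacuous, and every decomposition of $\bt$ has vanishing $r$-th entries.

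Your lower bound takes a genuinely different route from the paper's, and it is the one that actually works. The paper uses $\PP(\bigcap_{j\ge1}A_j^{(N)})=\inf_k\PP(\bigcap_{j\le k}A_j^{(N)})$ and then asserts $\liminf_N\inf_k a_{N,k}\ge\inf_k\liminf_N a_{N,k}$. This interchange fails in general, even for $a_{N,k}$ nonincreasing in $k$; take $a_{N,k}=-\mathbf 1\{k\ge N\}$. It needs control, uniform in $N$, of how well finitely many directions determine the whole event. Your $\delta$-net together with the random Lipschitz constant $L_N=(\frac1N\sum_i\|\sqrt N\bv_i\|^q)^{1/q}$ supplies exactly that. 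When bounding $\PP(L_N>L)$, remember that the $\sqrt N\bv_i=\mathbf{A}_N\mathbf{g}_i$ are not independent. Pull out $\|\mathbf{A}_N\|_{\mathrm{op}}$, which exceeds $2$ only with probability $e^{-cN}$, and apply the one-dimensional bound to $\frac1N\sum_i\|\mathbf{g}_i\|^q$.

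The gap is in your final step. Because of the centering, $\calJ_j(\bs)=\calI_{\calJ^{(j)}}(\bs^q-M_q\bfm1)$ is not homogeneous in $\bs$: $\calJ_j(M_q^{1/q}\bfm1)=0<\calJ_j((1+\eta)M_q^{1/q}\bfm1)$. So the claimed uniform comparison of $\calJ_j((1+\eta)\bs)$ with $(1+\eta)^{2/q}\calJ_j(\bs)$ fails exactly where $f$ touches $M_q^{1/q}$. Instead, set $\eta:=\eps M_q^{-1/q}$ and use $f\ge M_q^{1/q}$ to get
\[
(f+\eps)^q-M_q\le(1+\eta)^q\big(f^q-M_q\big)+\big((1+\eta)^q-1\big)M_q\bfm1 .
\]
Subadditivity $\calI_{\calJ^{(j)}}(\bt+\bt')\le\calI_{\calJ^{(j)}}(\bt)+\calI_{\calJ^{(j)}}(\bt')$ holds by concatenating decompositions, which Lemma~\ref{lem:Ij_k_summands} allows. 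Combining it with monotonicity and $\frac2q$-homogeneity gives
\[
\calJ_j(f+\eps)\le(1+\eta)^2\calJ_j(f)+\big((1+\eta)^q-1\big)^{2/q}M_q^{2/q}\,\calI_{\calJ^{(j)}}(\bfm1).
\]
This closes the argument only if $\sup_j\calI_{\calJ^{(j)}}(\bfm1)<\infty$, which is not automatic. A single jump cannot achieve it, since $\calJ^{(j)}(\bfm1)\to\infty$ as the $\bu_r$ fill the sphere (for $m\ge2$). Use $m$ jumps $a\be_1,\ldots,a\be_m$ instead. Since $\sum_{i=1}^m|\langle a\be_i,\bu\rangle|^q=a^q\|\bu\|_q^q\ge a^q m^{1-q/2}$ for $\bu\in\bbS^{m-1}$, the choice $a^q=m^{q/2-1}$ gives $\calI_{\calJ^{(j)}}(\bfm1)\le\frac12 m^{2-2/q}$ for every $j$. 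The same bound yields $\sup_j\calJ_j(f+\eps)<\infty$. You also need this to fix $L$ before $\delta$ and $k$, since $k$ depends on $L$ through $\delta$.
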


The rate in Theorem~\ref{thm: supprt function LDP p<2} equals \(0\) for the constant function \(f\equiv M_q^{1/q}\) and is strictly positive otherwise. By \cite[Theorem 2.4]{prochno2024limit} we have the following: let \(p\in(2,\infty]\) and \(\bV_N\sim \operatorname{Unif}(\bbV_{m,N})\). Let \(q\in[1,2)\) with \(1/p+1/q=1\). Then, $(h(N^{1/p-1/2}\bV_N\bbB_p^N,\cdot))_{N\ge 1}$ satisfies an LDP on $C(\bbS^{m-1})$ with speed $N$ and good rate function $\calI_p$. One representation is
\[
\calI_p(f)= \inf\Big\{J(\nu): f(\bu)=\Big(\int_{\bbR^m}|\langle \bx,\bu\rangle|^q\,\dif\nu(\bx)\Big)^{1/q}\ \text{for all } \bu\in\bbS^{m-1}\Big\},
\]
where \(J\) is a good convex rate function on probability measures on \(\bbR^m\) (see  \cite[Theorem 2.4]{prochno2024limit}; see also \cite[Theorem 1]{kabluchko2024strange}). 

\subsection{Further applications and examples}\label{subsec:further-examples}

The class of multivariate stretched-exponentials in Definition \ref{def:multiSE} contains several standard models from
reliability, signal processing, extreme-value theory, and high-dimensional statistics. In this section, we give some examples that can be applied to our theory. In each example below, after centering, one checks Definition \ref{def:multiSE} (typically via Lemma \ref{lem:density-implies-SE} or by direct evaluation of upper-orthant tails) together with Assumption \ref{assu-LT}.

\begin{itemize}

\item \textbf{(Multivariate Weibull tails).}
Let $\alpha\in (0,1)$ and $\lambda_0,\lambda_1,\dots,\lambda_k>0$. Following \cite{hanagal1996multivariate}, a canonical type of multivariate Weibull distribution is specified by the survival function
\[
\PP(\bX>\bt)=\exp\Big(
-\sum_{j=1}^k \lambda_j t_j^\alpha
-\lambda_0 \max_{1\le j\le k} t_j^\alpha
\Big),
\qquad \bt\in\bbR_+^k.
\]
Such distributions arise naturally in competing-risks, reliability shock models and have a long
history in dependence modelling through the Marshall--Olkin scheme \cite{JoseRisticJoseph2011,MarshallOlkin1967}.

\item \textbf{(Multivariate generalized Gaussian).}
Fix $\alpha\in(0,1)$, $\bSigma\in\bbR^{k\times k}$ symmetric positive definite, and define the density by
\[
f_{\bX}(\bx)
=
c_{\alpha,k}|\bSigma|^{-1/2}\exp\Big(-\big(\bx^{\top}\bSigma^{-1}\bx\big)^{\alpha/2}\Big),
\qquad \bx\in\bbR^k,
\]
where $c_{\alpha,k}$ is a normalization constant. This class is often called multivariate generalized Gaussian distribution and is widely used in model-based clustering and as a parametric model for
wavelet statistics in image processing \cite{DangBrowneMcNicholas2015, GomezGomezVillegasMarin1998,VerdoolaegeScheunders2012}.

\item \textbf{(Gaussian scale mixtures).}
Let $\bZ\sim \mathcal{N}(0,\bSigma)$ and $R\ge0$ independent of $\bZ$, and set $\bX=R\bZ$.
If $R$ has Weibull-type tail $\log\PP(R>r)\sim -c r^\alpha$ with $\alpha\in(0,1)$, then $\bX$ inherits
stretched-exponential right tails (with a rate $\calJ$ determined by the minimal radial scaling needed
to enter the orthant $x\bt$). Gaussian scale mixtures are a standard tool in sparse signal modelling
and appear in finance models \cite{HashorvaPakesTang2010,WainwrightSimoncelli1999}. A related case is covered in \cite{hagele2021large} for $\bZ$ being a random vector on the unit sphere.

\item \textbf{(Polynomial images of Gaussians).}
Let $\bG\sim\mathcal{N}(0, \Id_m)$ and define 
$$
\bX=(P_1(\bG),\dots,P_k(\bG)),
$$
where each $P_j$ is a non-degenerate homogeneous polynomial of total degree $d\ge3$. Tail estimates for Gaussian chaoses imply stretched-exponential tails of order $2/d\in(0,1)$
for such polynomial observables \cite{Latala2006}. These objects appear in stochastic
analysis (Wiener chaos expansions) and in random matrix theory.

\item \textbf{(Sub-Weibull vectors in high-dimensional covariance problems).}
In high-dimensional statistics one may assume coordinates with Weibull-type tails
$\PP(|X_j|>t)=\exp(-(t/K)^\alpha)$ for some $\alpha\in(0,1)$, in order to go beyond sub-Gaussianity (see for example \cite{KuchibhotlaChakrabortty2018}). Then centered quadratic and bilinear forms (e.g.,\ entries of sample covariance matrices) are sums of i.i.d.\ stretched-exponential terms, which is a key input for concentration and moderate/large deviation arguments in covariance estimation and regression under sub-Weibull tails.

\end{itemize}

\section{Preliminaries}\label{sec: preliminaries}
We introduce some notation and definitions.  For a random vector $\bX$ that is distributed according to a probability measure $\mu$, we write $\bX\sim \mu$. The mean of $\bX$ is denoted by $\EE[\bX]$ and the covariance by $\Cov[\bX]$. An open ball of radius $r > 0$ centered at $\bx$ in a metric space is denoted by $\BB_r(\bx)$. We use both $\bfm 1_{A}(\bx)$ and $\bfm 1\{\bx\in A\}$ for the indicator function over some set $A$. For a square matrix $\bA$, $\operatorname{Tr}(\bA)$ denotes the trace of $\bA$ and for a measurable set $B$, $\operatorname{vol}_m(B)$ is the $m$-dimensional Lebesgue measure of $B$. \\

Let $\XX$ be a topological space. A function $I\colon \XX \to [0,\infty]$ is a \emph{rate function} if all sublevel sets $\{x\in\XX: I(x)\le y\}$ are closed for every $y\in \RR$. A rate function is \emph{good} if all sublevel sets are compact.

\begin{definition}\label{def: LDP}
Let $(Z_N)_{N\in\bbN}$ be random elements in a topological space $\XX$. The sequence $(Z_N)_{N\in\bbN}$ satisfies a \emph{large deviation principle} with speed $s_N\to\infty$ and rate function $I\colon \XX\to[0,\infty]$ if
\begin{align}
\liminf_{N\to\infty}\frac{1}{s_N}\log \PP(Z_N\in O)
&\ge -\inf_{x\in O} I(x)\quad \text{for all open } O\subseteq \XX,\label{LDP lower bound}\\
\limsup_{N\to\infty}\frac{1}{s_N}\log \PP(Z_N\in C)
&\le -\inf_{x\in C} I(x)\quad \text{for all closed } C\subseteq \XX.\label{LDP upper bound}
\end{align}
\end{definition}

\begin{proposition}[Cram\'er's theorem,  \text{\cite[Chapter 24]{kallenberg1997foundations}}]\label{prop:cramer}
Let $\bX,\bX_1,\bX_2,\ldots$ be i.i.d. in $\RR^d$. Assume the \emph{log-moment generating function}
\[
\Lambda(\blambda):=\log\EE\big[e^{\langle \blambda, \bX\rangle}\big],\qquad \blambda\in\RR^d,
\]
is finite in a neighborhood of $\bfm 0\in\RR^d$. Then $\big(\frac{1}{N}\sum_{i=1}^N \bX_i\big)_{N\ge1}$ satisfies an LDP on $\RR^d$ with speed $N$ and good, convex rate function $\Lambda^*$, the \emph{Fenchel--Legendre transform} of $\Lambda$,
\[
\Lambda^*(\bx):=\sup_{\blambda\in\RR^d}\big(\langle \blambda,\bx\rangle-\Lambda(\blambda)\big),\qquad \bx\in\RR^d.
\]
\end{proposition}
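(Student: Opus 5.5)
This is the multivariate Cramér theorem, quoted from \cite[Chapter 24]{kallenberg1997foundations}. I would prove it along the classical route of Dembo--Zeitouni: first a weak LDP on $\RR^d$, then exponential tightness, which upgrades the weak LDP to a full LDP with a good rate function. Write $S_N=\frac1N\sum_{i=1}^N\bX_i$ and $\mu_N$ for its law. Since $\Lambda$ is finite near $\bfm 0$, it is convex, lower semicontinuous on $\RR^d$ and smooth in the interior of its domain. Hence $\Lambda^*$, a supremum of affine functions, is convex and lower semicontinuous.

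\emph{Upper bound.} For a compact set $K$ and $\delta>0$, I cover $K$ by finitely many balls $\BB_{\rho_x}(x)$. The radii $\rho_x$ and vectors $\blambda_x$ are chosen so that $\langle\blambda_x,x\rangle-\Lambda(\blambda_x)\ge\min\{\Lambda^*(x)-\delta,1/\delta\}$ and $\rho_x\|\blambda_x\|\le\delta$. The exponential Chebyshev inequality $\PP(S_N\in \BB_{\rho}(x))\le \exp\big(-N(\inf_{y\in \BB_\rho(x)}\langle\blambda,y\rangle-\Lambda(\blambda))\big)$ then bounds each piece, and the finite union bound gives $\limsup \frac1N\log\mu_N(K)\le-\inf_K\Lambda^*+2\delta$.

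\emph{Exponential tightness.} Finiteness of $\Lambda$ in a neighborhood of $\bfm 0$ gives $\Lambda(\pm\eta e_j)<\infty$ for some $\eta>0$ and all $j$. Chebyshev in each coordinate direction then yields $\PP(|(S_N)_j|\ge R)\le 2e^{-N(\eta R-c)}$. So the complements of cubes $[-R,R]^d$ have arbitrarily small exponential mass, and the compact upper bound extends to all closed sets. The same estimate shows the sublevel sets of $\Lambda^*$ are bounded, since $\Lambda^*(x)\ge \eta\|x\|_\infty-c$; together with lower semicontinuity this makes $\Lambda^*$ good.

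\emph{Lower bound.} This is the main obstacle in general dimension. It suffices to show $\liminf\frac1N\log\PP(S_N\in \BB_\delta(x))\ge-\Lambda^*(x)$ for every $x$ and $\delta>0$.

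When $x=\nabla\Lambda(\blambda)$ for some $\blambda$ in the interior of the domain of $\Lambda$, I would use the exponential tilt $\dd\tilde\mu=e^{\langle\blambda,\cdot\rangle-\Lambda(\blambda)}\dd\mu$. This tilted law has mean $x$. By the weak law of large numbers under $\tilde\mu$,
\[
\PP(S_N\in \BB_\delta(x))\ge e^{-N(\langle\blambda,x\rangle-\Lambda(\blambda)+\|\blambda\|\delta)}\,\tilde\PP(S_N\in \BB_\delta(x)),
\]
with $\tilde\PP(S_N\in \BB_\delta(x))\to1$. This gives the bound, because $\Lambda^*(x)=\langle\blambda,x\rangle-\Lambda(\blambda)$ at such points.

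For general $x$ (non-steep $\Lambda$, or $x$ not in the gradient range), I would first add a small independent Gaussian perturbation $\bX+\bY/\sqrt M$ with $\bY\sim\mathcal N(\bfm 0,\Id_d)$. The perturbed log-mgf $\Lambda_M=\Lambda+\|\cdot\|^2/(2M)$ is then finite everywhere and its gradient is surjective, so the tilting argument applies to it. Since $\Lambda_M\ge\Lambda$ we get $\Lambda_M^*\le\Lambda^*$. The Gaussian noise averaged over $N$ samples is superexponentially small at scale $\delta$ as $M\to\infty$, so it does not affect the probability of $\BB_\delta(x)$ on the exponential scale. Letting $M\to\infty$ then recovers $-\Lambda^*(x)$, completing the LDP.
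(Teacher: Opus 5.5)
The paper does not prove this proposition; it quotes it from \cite{kallenberg1997foundations}, so your argument has to stand on its own. Your upper bound, exponential tightness, goodness and convexity of $\Lambda^*$, and the tilting lower bound at points $x=\nabla\Lambda(\blambda)$ with $\blambda$ in the interior of $\mathcal D_\Lambda:=\{\Lambda<\infty\}$ are all fine.

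\textbf{The gap is in the last step.} Adding independent noise $\bY/\sqrt M$ replaces $\Lambda$ by $\Lambda_M=\Lambda+\|\cdot\|^2/(2M)$. This function is finite exactly where $\Lambda$ is. It is \emph{not} finite everywhere unless $\mathcal D_\Lambda=\RR^d$ from the start.

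The Gaussian device comes from the proof of Cram\'er's theorem in $\RR^d$ under the hypothesis $\mathcal D_\Lambda=\RR^d$ (as in \cite{dembo2009techniques}). There the only obstruction to surjectivity of $\nabla\Lambda$ is lack of coercivity (e.g.\ bounded support), and the quadratic term removes it. Under the hypothesis stated here, the obstruction can instead be non-steepness of $\Lambda$ at the boundary of $\mathcal D_\Lambda$, which the quadratic term does not touch.

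Concretely, take $d=1$ and let $X$ have density proportional to $e^{-x}(1+x)^{-3}$ on $(0,\infty)$. Then $\mathcal D_\Lambda=(-\infty,1]$ and $\Lambda'(1^-)<\infty$, so $\Lambda_M'$ maps $(-\infty,1)$ onto $(-\infty,\Lambda'(1^-)+1/M)$. For $x>\Lambda'(1^-)+1$ no tilted law has mean $x$, for any $M\ge1$. Yet $\Lambda^*(x)=x-\Lambda(1)<\infty$, so a nontrivial lower bound is needed there. Your lower bound is therefore missing exactly in the case that separates the stated proposition from the full-domain version. (If $\Lambda$ is steep, your argument does go through: $\Lambda_M^*<\infty$ everywhere then forces every $x$ into $\nabla\Lambda_M(\mathcal D_\Lambda^\circ)$.)

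\textbf{A standard repair is to truncate before smoothing.}
\begin{itemize}
\item Let $p_R:=\PP(\|\bX\|\le R)$, and let $S_N^{(R)}$ be the empirical mean of i.i.d.\ copies of $\bX$ conditioned on $\{\|\bX\|\le R\}$.
\item These copies have log-mgf finite on all of $\RR^d$, so your tilt-plus-Gaussian argument applies to them.
\item You also have $\PP(S_N\in\BB_\delta(x))\ge p_R^N\,\PP(S_N^{(R)}\in\BB_\delta(x))$, and the factor $p_R^N$ cancels the normalization $-\log p_R$ in the conditioned log-mgf.
\item Fix $M$ so large that the Gaussian error $e^{-cNM\delta^2}$ is negligible against $e^{-N(\Lambda^*(x)+1)}$. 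Then for all large $R$,
\[
\liminf_{N\to\infty}\frac1N\log\PP(S_N\in\BB_\delta(x))\ge-\sup_{\blambda}g_R(\blambda),\qquad
g_R(\blambda):=\langle\blambda,x\rangle-\log\EE\big[e^{\langle\blambda,\bX\rangle}\mathbf 1\{\|\bX\|\le R\}\big]-\frac{\|\blambda\|^2}{2M}.
\]
\item By monotone convergence, $g_R$ decreases pointwise to $\langle\blambda,x\rangle-\Lambda(\blambda)-\|\blambda\|^2/(2M)$.
\item The quadratic term makes the superlevel sets of $g_R$ compact uniformly in $R\ge R_0$. A Dini-type argument then gives $\sup g_R\downarrow\Lambda_M^*(x)\le\Lambda^*(x)$, which is what you need.
\end{itemize}
Alternatively, the subadditivity argument of \cite[Section 6.1]{dembo2009techniques} gives the lower bound directly, assuming only that $\Lambda$ is finite near $\bfm 0$.
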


\begin{proposition}[Contraction principle, \text{\cite[Theorem 4.2.1]{dembo2009techniques}}]\label{prop:contraction}
Let $\XX,\YY$ be Hausdorff topological spaces and $f\colon\XX\to\YY$ continuous. If $(Z_N)_{N\ge1}$ satisfies an LDP on $\XX$ with speed $s_N$ and good rate function $I\colon\XX\to[0,\infty]$, then $(f(Z_N))_{N\ge1}$ satisfies an LDP on $\YY$ with speed $s_N$ and good rate function $J\colon\YY\to[0,\infty]$ given by
\[
J(y)=\inf\{I(x):\ f(x)=y\},\qquad y\in\YY.
\]
By convention, $J(y)=+\infty$ if $f^{-1}(\{y\})=\varnothing$.
\end{proposition}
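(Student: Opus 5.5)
The plan is to transfer both large deviation bounds through preimages under $f$, and to use the goodness of $I$ to show that $J$ is again a good rate function. The argument has two parts: a topological part (properties of $J$) and a probabilistic part (the bounds \eqref{LDP lower bound} and \eqref{LDP upper bound} for $f(Z_N)$).

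\textbf{$J$ is a good rate function.} First I would show that the infimum defining $J(y)$ is attained whenever $J(y)<\infty$. If $J(y)\le a<\infty$, the fibre $f^{-1}(\{y\})$ is closed, since $\YY$ is Hausdorff and so points are closed, and $f$ is continuous. Its intersection with the sublevel set $\{I\le a+1\}$ is therefore a closed subset of a compact set, hence compact. It is also nonempty. The function $I$ is lower semicontinuous, so it attains its minimum on this compact set, and that minimum equals $J(y)$. This gives, for every $a\ge 0$,
\[
\{y\in\YY: J(y)\le a\} = f\big(\{x\in\XX: I(x)\le a\}\big).
\]
The inclusion $\supseteq$ is immediate from the definition of $J$. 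The inclusion $\subseteq$ follows from attainment: if $J(y)\le a$, pick a minimiser $x$ in the fibre, so $f(x)=y$ and $I(x)=J(y)\le a$. The right-hand side is the continuous image of a compact set, hence compact. Since $\YY$ is Hausdorff, compact sets are closed. So every sublevel set of $J$ is compact and closed, which means $J$ is a good rate function.

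\textbf{The two bounds.} These follow from the identity
\[
\inf_{y\in A} J(y) = \inf_{x\in f^{-1}(A)} I(x) \qquad \text{for all } A\subseteq\YY,
\]
which comes straight from the definition of $J$ and the convention $\inf\varnothing=+\infty$. Let $C\subseteq\YY$ be closed. Then $f^{-1}(C)$ is closed, and $\PP(f(Z_N)\in C)=\PP(Z_N\in f^{-1}(C))$. The upper bound \eqref{LDP upper bound} for $(Z_N)$ therefore gives
\[
\limsup_{N\to\infty}\frac1{s_N}\log\PP\big(f(Z_N)\in C\big) \le -\inf_{f^{-1}(C)}I = -\inf_C J.
\]
The lower bound for an open set $O\subseteq\YY$ is identical, because $f^{-1}(O)$ is open.

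The probabilistic part is routine. The only real point is the compactness step in the first part, specifically the attainment of the infimum on each fibre. It is exactly there that goodness of $I$ is needed: without it, $J$ need not be lower semicontinuous. The Hausdorff assumption is needed twice, to make the fibres closed and to make compact images closed.
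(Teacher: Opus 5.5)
Your proof is correct and is essentially the standard argument from the cited source (Dembo--Zeitouni, Theorem 4.2.1), which the paper quotes without proof. Goodness of $I$ gives attainment of the infimum on each fibre, hence $\{J\le a\}=f(\{I\le a\})$ is compact, and both LDP bounds then follow from $\inf_{A}J=\inf_{f^{-1}(A)}I$ together with continuity of $f$. Your additional detail (fibres closed because points are closed in a Hausdorff space, compact images closed) only makes explicit steps that Dembo--Zeitouni leave implicit.
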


\noindent We state the moderate deviation principle in our notation. See \cite[Theorem 4.7]{gao2011delta}.

\begin{proposition}[Gaussian range for the empirical mean, general]\label{prop: MDP general}
Let $\XX$ be a measurable space and let $X$ take values in $\XX$ with law $\mu$. Let $\TT\subseteq\RR^d$ be a parameter set and let $\psi:\XX\times\TT\to\RR^d$ be measurable. Let $(a_N)_{N\in\bbN}$ be such that $a_N\to\infty$ and $a_N/\sqrt{N}\to 0$. For $\btheta\in\TT$ set
\[
\bPsi(\btheta):=\EE[\psi(X,\btheta)],\qquad
\bPsi_N(\btheta):=\frac{1}{N}\sum_{i=1}^N \psi(X_i,\btheta),
\]
where $X_1,\dots,X_N$ are i.i.d.\ with law $\mu$. Assume:

(C1) For each $x\in\XX$, the map $\btheta\mapsto\psi(x,\btheta)$ is continuous. For each $\btheta\in\TT$, the map $x\mapsto\psi(x,\btheta)$ is measurable.

(C2) There exist $\btheta_0\in\TT$ and $\eta>0$ such that:
\begin{itemize}[nosep,leftmargin=*]
\item $\bPsi(\btheta_0)=0$ and $\btheta_0$ is the unique zero of $\bPsi$ in $\TT$,
\item $\BB_\eta(\btheta_0)\subseteq \TT$,
\item $\bPsi$ is differentiable at $\btheta_0$ with nonsingular derivative $\bA\in\RR^{d\times d}$,
\item $\bPsi$ is a homeomorphism on $\BB_\eta(\btheta_0)$,
\item $\EE[\|\psi(X,\btheta_0)\|^2]<\infty$.
\end{itemize}

(C3) 
\[
\frac{\sqrt{N}}{a_N}\sup_{\btheta\in\BB_\eta(\btheta_0)}
\|\bPsi_N(\btheta)-\bPsi(\btheta)\|\xrightarrow{\PP}0,\,
\limsup_{N\to\infty}\frac{1}{a_N^2}\log\Big(N\,\PP\big(\sup_{\btheta\in\BB_\eta(\btheta_0)}\|\psi(X,\btheta)\|\ge \sqrt{N}\,a_N\big)\Big)=-\infty.
\]

Let $\bPsi_{0N}$ be the restriction of $\bPsi_N$ to $\BB_\eta(\btheta_0)$. Let $\Phi$ be the inverse map functional from \cite[Lemma 4.1]{gao2011delta} and set $\btheta_N:=\Phi(\bPsi_{0N})$. Then
\[
\frac{\sqrt{N}}{a_N}\,(\btheta_N-\btheta_0)
\]
satisfies an LDP with speed $a_N^2$ and good rate function
\[
I(\bz)=\frac12\langle \bA \bz,\bGamma^{-1}\bA \bz\rangle,
\]
where $\bGamma=\Cov(\psi(X,\btheta_0))$ is invertible.
\end{proposition}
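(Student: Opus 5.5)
The plan is to derive the statement from a moderate deviation principle for the centred empirical mean at the single point $\btheta_0$, and then push it through the inverse map $\Phi$ with a delta method at speed $a_N^2$.

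\textbf{Step 1: finite-dimensional MDP at $\btheta_0$.} Write $\bY_i:=\psi(X_i,\btheta_0)$. These are i.i.d.\ and centred, since $\bPsi(\btheta_0)=0$, with finite second moment and covariance $\bGamma$. I claim that $\frac{\sqrt N}{a_N}\bPsi_N(\btheta_0)=\frac{1}{a_N\sqrt N}\sum_{i=1}^N\bY_i$ satisfies an LDP with speed $a_N^2$ and rate $\frac12\langle \by,\bGamma^{-1}\by\rangle$.
\begin{itemize}
\item Truncate at level $\sqrt N a_N$, setting $\bY_i=\bY_i^{\le}+\bY_i^{>}$ with $\bY_i^{\le}:=\bY_i\bfm 1\{\|\bY_i\|<\sqrt N a_N\}$.
\item For the bounded parts, compute the log-moment generating function at $\blambda a_N/\sqrt N$ by a second-order Taylor expansion. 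The limit $\frac{1}{a_N^2}\log\EE\exp\big(\langle\blambda,\frac{a_N}{\sqrt N}\sum_i\bY_i^{\le}\rangle\big)\to\frac12\blambda^\top\bGamma\blambda$ uses that the exponent of each summand is $O(\|\blambda\|)$ and that $\bGamma$ is finite.
\item G\"artner--Ellis then gives the LDP for the truncated sum.
\item The second condition in (C3) implies $\frac{1}{a_N^2}\log\big(N\,\PP(\|\bY_1\|\ge\sqrt N a_N)\big)\to-\infty$. Hence, by a union bound, the event that any untruncated part is nonzero is exponentially negligible at speed $a_N^2$, and the two sums are exponentially equivalent.
\end{itemize}

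\textbf{Step 2: lift to the process level.} Regard $\frac{\sqrt N}{a_N}(\bPsi_{0N}-\bPsi)$ as a random element of $\ell^\infty(\BB_\eta(\btheta_0),\RR^d)$. The first part of (C3), combined with the tail condition, should upgrade to exponential negligibility at speed $a_N^2$ of $\frac{\sqrt N}{a_N}\sup_{\btheta}\|(\bPsi_N-\bPsi)(\btheta)-\bPsi_N(\btheta_0)\|$.

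I would try to obtain this upgrade via a Talagrand/Bernstein-type concentration inequality for suprema of empirical processes, applied to the truncated class. The key input is that the expectation of the supremum is $o(a_N/\sqrt N)$, which is what (C3) provides in probability. The big-jump part is removed by the tail condition exactly as in Step 1.

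This yields an LDP in $\ell^\infty$ whose rate is infinite off the set of constant functions. On constant functions $h\equiv\by$ the rate is $\frac12\by^\top\bGamma^{-1}\by$. I expect this step to be the main obstacle, because convergence in probability must be converted into superexponential control at speed $a_N^2$.

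\textbf{Step 3: delta method.} By \cite[Lemma 4.1]{gao2011delta}, $\Phi$ is Hadamard differentiable at $\bPsi$ tangentially to the relevant set, with derivative $h\mapsto -\bA^{-1}h(\btheta_0)$. This uses (C1), (C2), the homeomorphism property, and uniqueness of the zero.

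The delta method for moderate deviations is the Hadamard-differentiability analogue of the contraction principle, Proposition \ref{prop:contraction}, with an exponential-equivalence remainder. It transfers the LDP of Step 2 to $\frac{\sqrt N}{a_N}(\btheta_N-\btheta_0)$. The resulting rate is the contraction under $\by\mapsto\bz=-\bA^{-1}\by$:
\[
I(\bz)=\inf\Big\{\tfrac12\by^\top\bGamma^{-1}\by:\ -\bA^{-1}\by=\bz\Big\}=\tfrac12\langle\bA\bz,\bGamma^{-1}\bA\bz\rangle.
\]
Goodness of $I$ follows because $\bA$ and $\bGamma$ are invertible.
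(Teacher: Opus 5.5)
The paper gives no argument for this proposition; it simply invokes \cite[Theorem 4.7]{gao2011delta}. Your architecture (an MDP for the estimating-function process at speed $a_N^2$, then the delta method through the Hadamard derivative $h\mapsto-\bA^{-1}h(\btheta_0)$ of $\Phi$, then contraction) is the route behind that theorem. However, the key claim of your Step 2 is false. Conditions (C1)--(C3) do not make $\frac{\sqrt N}{a_N}\sup_{\btheta\in\BB_\eta(\btheta_0)}\|(\bPsi_N-\bPsi)(\btheta)-\bPsi_N(\btheta_0)\|$ superexponentially negligible.

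For a counterexample, take $d=1$, $X=(U,V)$ with $U,V$ i.i.d.\ standard Gaussian, $\theta_0=0$ and $\psi(X,\theta)=U+\theta(V-1)$. Then $\bPsi(\theta)=-\theta$, and all hypotheses hold because the envelope is Gaussian. The increment is $\theta\bar V_N$, so the supremum equals $\eta\frac{\sqrt N}{a_N}|\bar V_N|$. The quantity $a_N^{-2}\log$ of the probability that it exceeds $\varepsilon$ tends to $-\varepsilon^2/(2\eta^2)$, not $-\infty$. A Talagrand-type bound gives exactly this in general: Gaussian concentration governed by the weak variance $\sup_{\btheta}\operatorname{Var}(\psi(X,\btheta)-\psi(X,\btheta_0))$, hence a finite rate unless that variance vanishes. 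Convergence in probability in (C3) cannot be upgraded to superexponential smallness; already $\frac{\sqrt N}{a_N}\bPsi_N(\btheta_0)$ has a nondegenerate MDP. So the process-level rate is not infinite off the constants.

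Your final formula survives, but Step 2 must be replaced. One option is to prove the genuine MDP of $\frac{\sqrt N}{a_N}(\bPsi_{0N}-\bPsi)$ in $\ell^\infty(\BB_\eta(\btheta_0),\RR^d)$ with its nondegenerate rate. This is an empirical-process MDP in the sense of Arcones: finite-dimensional MDPs plus exponential asymptotic equicontinuity. You then apply the delta method tangentially to functions continuous at $\btheta_0$. Contracting through $h\mapsto h(\btheta_0)$ returns $\frac12\by^\top\bGamma^{-1}\by$, which gives $I$.

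The other option is to prove only the local statement
$\lim_{\delta\downarrow0}\limsup_N a_N^{-2}\log\PP\big(\frac{\sqrt N}{a_N}\sup_{\|\btheta-\btheta_0\|\le\delta}\|(\bPsi_N-\bPsi)(\btheta)-\bPsi_N(\btheta_0)\|>\varepsilon\big)=-\infty$.
Here the weak variance does vanish as $\delta\downarrow0$, by (C1) and dominated convergence with a square-integrable envelope. Combine this with superexponential consistency of $\btheta_N$ to linearize directly.

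Step 1 also has a slip. After truncating at $\sqrt N a_N$, the per-summand exponent $\langle\blambda,\frac{a_N}{\sqrt N}\bY_i^{\le}\rangle$ can be of order $\|\blambda\|a_N^2$, not $O(\|\blambda\|)$. The Taylor step therefore needs truncation at $\varepsilon\sqrt N/a_N$ plus separate control of the band up to $\sqrt N a_N$. Alternatively, cite the known i.i.d.\ MDP, e.g.\ \cite{eichelsbacher2003moderate}.
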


\noindent We deduce the following simplified version of Proposition \ref{prop: MDP general}.

\begin{corollary}[Gaussian range in $\RR^k$]\label{cor: MDP in Rd}
Let $\bX, \bX_1,\dots, \bX_N$ be i.i.d.\ in $\RR^k$ with mean $\EE[\bX]$ and invertible covariance matrix 
$\bSigma=\Cov(\bX)$. Let $(a_N)_{N\in\bbN}$ be such that $a_N\to\infty$ and $a_N/\sqrt{N}\to 0$. Assume
\begin{align}
\limsup_{N\to\infty}\frac{1}{a_N^2}
\log\Big(N\,\PP\big(\|\bX\|\ge \sqrt{N}\,a_N\big)\Big)=-\infty.
\label{eq: cond for MDP}
\end{align}
Then
\[
\frac{1}{a_N\sqrt{N}}\sum_{i=1}^N (\bX_i-\EE[\bX])
\]
satisfies an LDP with speed $a_N^2$ and good rate function $I(\bz)=\frac12\langle \bz,\bSigma^{-1}\bz\rangle$.
\end{corollary}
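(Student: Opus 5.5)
The plan is to apply Proposition~\ref{prop: MDP general} with the simplest possible estimating function, namely the \emph{mean functional}. Take $\XX=\RR^k$, $\TT=\RR^k=\RR^d$ with $d=k$, and
\[
\psi(\bx,\btheta):=\bx-\btheta .
\]
Then $\bPsi(\btheta)=\EE[\bX]-\btheta$ and $\bPsi_N(\btheta)=\frac1N\sum_{i=1}^N\bX_i-\btheta$.

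First we verify (C1) and (C2), which are immediate.
\begin{itemize}
\item Continuity in $\btheta$ and measurability in $\bx$ are clear.
\item $\btheta_0=\EE[\bX]$ is the unique zero of $\bPsi$.
\item $\bPsi$ is affine, so it is differentiable with derivative $\bA=-\operatorname{Id}_k$, which is nonsingular.
\item $\bPsi$ is a homeomorphism on any ball $\BB_\eta(\btheta_0)$ onto its image.
\item $\EE\|\psi(\bX,\btheta_0)\|^2=\operatorname{Tr}(\bSigma)<\infty$, because an invertible covariance matrix is in particular finite.
\item $\bGamma=\Cov(\psi(\bX,\btheta_0))=\bSigma$ is invertible.
\end{itemize}

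Next we verify (C3). Since $\bPsi_N(\btheta)-\bPsi(\btheta)=\frac1N\sum_i(\bX_i-\EE\bX)$ does not depend on $\btheta$, the supremum is trivial. The first condition becomes
\[
\frac{1}{a_N\sqrt N}\sum_{i=1}^N(\bX_i-\EE\bX)\xrightarrow{\PP}0 .
\]
This holds by Chebyshev's inequality, since the second moment equals $\operatorname{Tr}(\bSigma)/a_N^2\to0$.

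The second condition of (C3) requires control of $\sup_{\btheta\in\BB_\eta(\btheta_0)}\|\bX-\btheta\|\le\|\bX\|+\|\btheta_0\|+\eta$. This is the only step needing care, because hypothesis \eqref{eq: cond for MDP} is stated for $\|\bX\|$ itself. I would handle it by absorbing the constant $C:=\|\btheta_0\|+\eta$. For $N$ large we have $\sqrt N a_N-C\ge \tfrac12\sqrt N a_N$, since $\sqrt N a_N\to\infty$. Hence
\[
\PP\big(\|\bX\|+C\ge\sqrt Na_N\big)\le\PP\big(\|\bX\|\ge\tfrac12\sqrt Na_N\big).
\]
Writing $\tfrac12\sqrt N a_N=\sqrt N a_N'$ with $a_N'=a_N/2$ gives $(a_N')^2=a_N^2/4$. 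Applying \eqref{eq: cond for MDP} along the sequence $a_N'$ then yields $-\infty$, and the factor $4$ is irrelevant.

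There is a subtlety here: \eqref{eq: cond for MDP} is a hypothesis for the given sequence $a_N$, not for $a_N/2$. To avoid this cleanly, I would instead apply the proposition to the centered vectors $\bX-\EE\bX$ and take $\btheta_0=0$. The shift by $\EE\bX$ can be handled in the same absorption step, with $\eta$ chosen small. If needed, I would note that the intended use of \eqref{eq: cond for MDP} is robust to constant rescaling, as it is in our applications with stretched-exponential tails. This bookkeeping is the main (minor) obstacle.

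Finally, $\btheta_N=\Phi(\bPsi_{0N})$ is the zero of $\bPsi_N$, namely $\btheta_N=\frac1N\sum_i\bX_i$, at least on the event that it lies in the ball. Therefore
\[
\frac{\sqrt N}{a_N}(\btheta_N-\btheta_0)=\frac{1}{a_N\sqrt N}\sum_i(\bX_i-\EE\bX).
\]
The rate function is $\tfrac12\langle -\bz,\bSigma^{-1}(-\bz)\rangle=\tfrac12\bz^\top\bSigma^{-1}\bz$, which gives the claim.
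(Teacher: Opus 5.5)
Your route is the paper's: the same $\psi(\bx,\btheta)=\bx-\btheta$, the same checks of (C1)--(C2), and the same identification of $\btheta_N$ and of the rate. Reading $\EE\|\bX\|^2<\infty$ off the covariance hypothesis and using Chebyshev in the first part of (C3) are harmless simplifications. The step you flag in (C3)(ii), however, is a genuine gap, not bookkeeping, and re-centering does not remove it. For any $\btheta_0$, the supremum over $\BB_\eta(\btheta_0)$ adds at least $\eta>0$, plus $\|\EE\bX\|$ as long as the hypothesis concerns the uncentered $\|\bX\|$. So you need $N\,\PP(\|\bX\|\ge \sqrt N a_N-C)$ to be superexponentially small. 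But \eqref{eq: cond for MDP} controls the tail only at the exact points $\sqrt N a_N$, and for an arbitrary sequence $a_N$ it says nothing about the mass in $[\sqrt N a_N-C,\sqrt N a_N)$. The paper's proof makes exactly the same jump: it passes to $N\,\PP(\|\bX\|\ge \frac12\sqrt N a_N)$ and asserts that \eqref{eq: cond for MDP} applies. So following it does not help.

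In fact the gap cannot be closed under the stated hypotheses. Take $k=1$, $s_1$ large, $s_{j+1}=e^{s_j}$, and a centered $X$ equal to $s_j$ with probability $p_j=e^{-s_j^{2/3}}$ and to a small negative constant otherwise, so that $\sigma^2=\EE X^2<1/2$. Put $t_N=s_j+1/N$ if some atom $s_j$ lies in $[\sqrt N\log N,N/\log N)$ (there is at most one), $t_N=\sqrt N\log N$ otherwise, and $a_N=t_N/\sqrt N$. Then $a_N\to\infty$ and $a_N/\sqrt N\to0$. Moreover $\PP(|X|\ge t_N)$ only sees atoms of size at least $e^{\sqrt N\log N}$ in the first case, or at least $N/\log N$ while $a_N^2=\log^2N$ in the second case, so \eqref{eq: cond for MDP} holds. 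Yet for $N_j=\lceil s_j^{4/3}\rceil$ one has $t_{N_j}=s_j+1/N_j$ and $a_{N_j}^2\sim s_j^{2/3}$. On $\{X_1=s_j\}\cap\{X_2+\dots+X_{N_j}\ge 1/N_j\}$ the sum is at least $t_{N_j}$. Writing $Z_N:=\frac{1}{a_N\sqrt N}\sum_{i=1}^N X_i$, the CLT gives $\PP(Z_{N_j}\ge1)\ge p_j/3$, hence $\liminf_j a_{N_j}^{-2}\log\PP(Z_{N_j}\ge1)\ge-1>-1/(2\sigma^2)$. This contradicts the MDP upper bound on $[1,\infty)$.

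What is missing is an additional assumption. One option is \eqref{eq: cond for MDP} with $\frac12\sqrt N a_N$ in place of $\sqrt N a_N$. Another is the regularity $a_N\uparrow$, $a_N/\sqrt N\downarrow$ used by Eichelsbacher--L\"owe, under which halving the threshold changes $a_N^2$ only by a bounded factor. With either, your argument goes through as written. \Cref{thm:MDP} is unaffected, since there $\PP(\|\bX\|\ge t)\le e^{-ct^\alpha}$ holds for all large $t$.
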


Condition \eqref{eq: cond for MDP} is weaker than the existence of exponential moments and thus suited for stretched-exponential tails.

\section{Proofs}\label{sec: Proofs}
In this section, we provide all the proofs. We start with some preliminary lemmas. After that we give the proof of the main result and we end the section with proofs of the applications. 

\subsection{Preliminary lemmas}

We first show that the function $\calJ$  in Definition \ref{def:multiSE} is always $\alpha$-homogeneous.
\begin{lemma}\label{lem:homogeneity_barJ}
    If $\bY$ satisfies Definition \ref{def:multiSE} with rate $(\calJ,\alpha)$, then $\calJ$ is $\alpha$-homogeneous.
\end{lemma}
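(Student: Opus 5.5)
The plan is a change of variables in the limit defining $\calJ$. Fix $\lambda>0$ and $\bt\in\bbR_+^k$ with at least one strictly positive component; then $\lambda\bt$ also has a positive component and $(\lambda\bt)_+=\lambda\bt_+$. By Definition~\ref{def:multiSE} applied at the point $\lambda\bt$,
\[
-\calJ(\lambda\bt)=\lim_{x\to\infty}\frac{1}{x^\alpha}\log\PP\big(\bY\ge x\lambda\bt\big).
\]
Substitute $y:=\lambda x$, which tends to infinity exactly when $x$ does. Then $x^{-\alpha}=\lambda^{\alpha}y^{-\alpha}$, so
\[
\frac{1}{x^\alpha}\log\PP\big(\bY\ge x\lambda\bt\big)=\lambda^\alpha\,\frac{1}{y^\alpha}\log\PP\big(\bY\ge y\bt\big).
\]
As $y\to\infty$ the right-hand side converges to $-\lambda^\alpha\calJ(\bt)$, again by Definition~\ref{def:multiSE}. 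Hence $\calJ(\lambda\bt)=\lambda^\alpha\calJ(\bt)$ for all such $\bt$.

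The only remaining point is the origin $\bt=\bfm 0$, where the definition gives no limit formula. There the homogeneity identity $\calJ(\bfm 0)=\lambda^\alpha\calJ(\bfm 0)$ for every $\lambda>0$ forces $\calJ(\bfm 0)=0$, since $\calJ$ is finite-valued. So I would verify $\calJ(\bfm 0)=0$ directly from lower semicontinuity. Take any $\bs\in\bbR_+^k\setminus\{\bfm 0\}$. Then $\calJ(\lambda\bs)=\lambda^\alpha\calJ(\bs)\to0$ as $\lambda\downarrow0$, and lower semicontinuity at $\bfm 0$ gives
\[
\calJ(\bfm 0)\le\liminf_{\lambda\downarrow0}\calJ(\lambda\bs)=0.
\]
Since $\calJ\ge0$, this yields $\calJ(\bfm 0)=0$, and the homogeneity identity then also holds at $\bfm 0$.

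I expect no real obstacle. The one subtlety is that the limit in the definition is along a continuous parameter $x$, so the reparametrization $y=\lambda x$ is legitimate as a limit along $y\to\infty$ without passing to sequences. The $\bfm 0$ case is the only place where lower semicontinuity is needed.
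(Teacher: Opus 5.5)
Your proof is correct and is the paper's argument: the same substitution $y=\lambda x$ in the defining limit, applied at a nonzero $\bt\in\bbR_+^k$. Your extra step showing $\calJ(\bfm 0)=0$ via lower semicontinuity covers the origin, which the paper's proof excludes but implicitly relies on later (e.g.\ where it writes $0=-\calJ(\bfm 0)$ in the upper bound).
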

    
    \begin{proof}
    Fix $\bu\in\bbR_+^k\setminus\{\bfm 0\}$ and $\lambda>0$. By Definition \ref{def:multiSE}, we get
    \begin{align*}
    -\calJ(\lambda\bu)
    &=\lim_{x\to\infty}\frac{1}{x^\alpha}\log \PP(\bY\ge x\lambda\bu)
    =\lim_{y\to\infty}\frac{1}{(y/\lambda)^\alpha}\log \PP(\bY\ge y\bu) \\
    &=\lambda^\alpha \lim_{y\to\infty}\frac{1}{y^\alpha}\log \PP(\bY\ge y\bu)
    =-\lambda^\alpha \calJ(\bu),
    \end{align*}
    hence $\calJ(\lambda\bu)=\lambda^\alpha \calJ(\bu)$.
\end{proof}

As the following lemma shows, the random variables that have stretched-exponential tails in the sense of Definition \ref{def:multiSE} always have second moments.

\begin{lemma}[Marginal tails and finite second moments]
    \label{lem:finite-second}
    If $\bY$ satisfies \Cref{def:multiSE} and Assumption \ref{assu-LT}, then there exist constants $c,C>0$ such that for all $j \in \{1,\dots,k\}$, $\PP(|Y_j|>t)\le c e^{-C t^\alpha}$ for $t$ large enough and, in particular, $\EE[Y_j^2]<\infty$.
\end{lemma}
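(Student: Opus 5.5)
The plan is to handle the right and left tails of each marginal separately and then integrate the combined tail bound to obtain the second moment. The left tail is given directly by Assumption~\ref{assu-LT}: for each $j$ there is $c_j>0$ with $\PP(Y_j\le -t)\le e^{-c_j t^\alpha}$ for all large $t$.

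For the right tail, I will use Definition~\ref{def:multiSE} with the vector $\bt=\mathbf e_j$. Its $j$-th component is $1>0$, so the definition applies. Because the event $\{\bY\ge x\mathbf e_j\}$ requires $Y_j\ge x$ and $Y_i\ge 0$ for $i\neq j$, it is not the same as $\{Y_j\ge x\}$. I will therefore use instead $\bt=\mathbf e_j-M\sum_{i\ne j}\mathbf e_i$ with a large $M$, or better, any $\bt$ with $t_j=1$ and other entries negative. Then $\bt_+=\mathbf e_j$, but the event $\{\bY\ge x\bt\}$ still constrains the other coordinates.

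The cleanest route is a union bound. For any $\delta>0$,
\[
\PP(Y_j\ge x)\le \PP(\bY\ge x\bt)+\sum_{i\ne j}\PP(Y_i< -Mx),
\qquad \bt=\mathbf e_j-M\textstyle\sum_{i\neq j}\mathbf e_i .
\]
The first term is at most $\exp(-x^\alpha(\calJ(\mathbf e_j)-\delta))$ for large $x$, by the definition and since $\bt_+=\mathbf e_j$. Each remaining term is at most $\exp(-c_iM^\alpha x^\alpha)$ for large $x$, by Assumption~\ref{assu-LT}. By non-degeneracy, $\calJ(\mathbf e_j)>0$. Choosing $\delta=\calJ(\mathbf e_j)/2$ and any $M\ge 1$ then gives $\PP(Y_j\ge x)\le k\,e^{-C_jx^\alpha}$ for large $x$, where $C_j=\min\{\calJ(\mathbf e_j)/2,\ \min_i c_i\}$.

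Combining the two tails, I take $C:=\min_j\min\{C_j,c_j\}>0$ and $c:=k+1$. This gives $\PP(|Y_j|>t)\le c e^{-Ct^\alpha}$ for all $j$ and all large $t$, with the threshold taken as the maximum over the finitely many $j$. For the second moment I will use
\[
\EE[Y_j^2]=\int_0^\infty 2t\,\PP(|Y_j|>t)\,\dd t .
\]
On $[0,t_0]$ the integrand is bounded. Beyond $t_0$ it is dominated by $2ct\,e^{-Ct^\alpha}$, which is integrable since $\alpha>0$. Hence $\EE[Y_j^2]<\infty$.

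The only delicate point is the one already addressed. The definition controls orthant events rather than marginal events, so passing to the marginal tail requires removing the constraints on the other coordinates. The left-tail assumption is exactly what makes this possible.
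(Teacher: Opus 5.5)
Your proposal is correct and is essentially the paper's proof. The paper uses your test vector with $M=1$, namely $(-t,\ldots,-t,t,-t,\ldots,-t)$ with positive part $t\,\mathbf e_j$, and bounds $\PP(Y_j\ge t)$ by the same union bound using Assumption~\ref{assu-LT} for the other coordinates. It then combines the two tails and integrates, so your extra freedom in $M$ is harmless but unnecessary.
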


\begin{proof}[Proof of Lemma \ref{lem:finite-second}]
    For $t\to\infty$ and $\be_j$ the $j$-th unit vector, consider
    \[
    \bv^{(j)}(t):=(-t,\ldots,-t,t,-t,\ldots,-t)\in\bbR^k.
    \]
    Then $(\bv^{(j)}(t))_+=t\be_j$ and $\bv^{(j)}(t)=t\,\bv^{(j)}(1)$. Hence, by \Cref{def:multiSE},
    \[
        \lim_{t\to\infty}\frac{1}{t^\alpha}\log\PP(\bY\ge \bv^{(j)}(t))
        = -\calJ(\be_j).
    \]
    Set $d_j:=\calJ(\be_j)>0$. Therefore, for all large $t$,
    \[
    \PP(\bY\ge \bv^{(j)}(t)) \le \exp\Big(-\frac{d_j}{2}\,t^\alpha\Big).
    \]
    Moreover,
    \[
    \PP(Y_j\ge t)
    \le \PP\Big(Y_j\ge t,\ \forall \ell\ne j:\ Y_\ell\ge -t\Big) + \sum_{\ell\ne j}\PP(Y_\ell\le -t).
    \]
    The first term is bounded by $\PP(\bY\ge \bv^{(j)}(t))$, and Assumption~\ref{assu-LT} yields
    $\PP(Y_\ell\le -t)\le \exp(-c_\ell t^\alpha)$ for all large $t$. Since $k<\infty$, there exist $t_0\ge 1$ and
    constants $C_+,C_->0$ such that for all $t\ge t_0$ and all $j$,
    \[
    \PP(Y_j\ge t)\le e^{-C_+ t^\alpha},
    \qquad
    \PP(Y_j\le -t)\le e^{-C_- t^\alpha}.
    \]
    Hence for $t\ge t_0$ and all $j$, $\PP(|Y_j|>t)\le 2e^{-C t^\alpha}$ with $C:=\min\{C_+,C_-\}$, which implies
    $\EE[Y_j^2]<\infty$.
\end{proof}

\begin{lemma}\label{lem:Ij_k_summands}
    Let $\calJ: \bbR^k_+\to[0,\infty)$ be as in Definition \ref{def:multiSE} and recall the definition of  $\calI_\calJ$ in \eqref{def-rate-fct}. Then, for every
    $\bt\in\bbR^k_+$,
    \begin{equation}\label{rate-fct-second}
    \calI_\calJ(\bt) 
        = \min_{m\in\NN} \inf\Big\{\sum_{r=1}^m \calJ\big(\bt^{(r)}\big): \bt^{(r)}\in\bbR^k_+,\ \sum_{r=1}^m \bt^{(r)}\ge \bt\Big\} 
       \end{equation}
    Further, $\calI_\calJ$ is nondecreasing in each component.
\end{lemma}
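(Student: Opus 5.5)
The plan is to prove the two inequalities between \eqref{def-rate-fct} and \eqref{rate-fct-second} separately, using only two structural properties of $\calJ$: it is componentwise nondecreasing, and it is $\alpha$-homogeneous with $\alpha<1$ (Lemma~\ref{lem:homogeneity_barJ}). Componentwise monotonicity follows from Definition~\ref{def:multiSE}: if $\bs\ge\bt$ then $\{\bY\ge x\bs\}\subseteq\{\bY\ge x\bt\}$, so $\calJ(\bs)\ge\calJ(\bt)$. The value at the origin is not covered by the definition, so I would settle it first. Homogeneity on rays and lower semicontinuity give $\calJ(\bfm 0)\le\liminf_{\lambda\downarrow0}\lambda^\alpha\calJ(\bu)=0$, hence $\calJ(\bfm 0)=0$. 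I would also note that the minimum in \eqref{def-rate-fct} is attained, since the feasible set $\{(\bt^{(r)})_{r\le k}\in(\bbR_+^k)^k:\sum_r\bt^{(r)}=\bt\}$ is compact and $\calJ$ is lower semicontinuous.

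The inequality ``$\ge$'' is immediate: \eqref{def-rate-fct} is the special case $m=k$ with equality instead of $\ge$. For ``$\le$'', fix $m$ and vectors $\bt^{(1)},\dots,\bt^{(m)}\in\bbR_+^k$ with $\sum_r\bt^{(r)}\ge\bt$. I would carry out two reductions.

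\emph{Reduction 1 (at most $k$ nonzero summands).} Set $\bv^{(r)}:=\bt^{(r)}$ and consider the polyhedron $P:=\{\blambda\in\bbR_+^m:\sum_r\lambda_r\bv^{(r)}\ge\bt\}$, which contains $\bfm 1$. On $P$ consider the function $F(\blambda):=\sum_r\lambda_r^\alpha\calJ(\bv^{(r)})=\sum_r\calJ(\lambda_r\bv^{(r)})$. It is concave because $\alpha<1$, and it is bounded below by $0$. Since $P\subseteq\bbR_+^m$ is pointed, Minkowski's theorem gives $P=\operatorname{conv}(V)+\operatorname{rec}(P)$ with $V$ the finite vertex set. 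A concave function bounded below is nondecreasing along every ray, so $F(\bp+\bd)\ge F(\bp)$ for $\bp\in\operatorname{conv}(V)$ and $\bd\in\operatorname{rec}(P)$. By concavity, $\min_{\operatorname{conv}(V)}F$ is attained at a vertex. Hence there is a vertex $\blambda^*$ with $F(\blambda^*)\le F(\bfm 1)=\sum_r\calJ(\bt^{(r)})$. At a vertex of $P$, at least $m$ linearly independent constraints are tight, and at most $k$ of them come from $\sum_r\lambda_r\bv^{(r)}\ge\bt$. Therefore at least $m-k$ coordinates of $\blambda^*$ vanish. Dropping those coordinates leaves at most $k$ vectors $\lambda_r^*\bv^{(r)}$ whose sum is still $\ge\bt$, at no larger cost. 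I expect this step to be the main obstacle: $\calJ$ itself is not concave, and the trick is to freeze the directions $\bv^{(r)}$ and exploit concavity only in the scaling parameters.

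\emph{Reduction 2 (from $\ge$ to $=$).} Given at most $k$ vectors $\bs^{(r)}\in\bbR_+^k$ with $\sum_r\bs^{(r)}\ge\bt$, I would shrink them coordinatewise. In coordinate $i$, decrease the entries $s^{(r)}_i$ one after another (each to no less than $0$) until $\sum_r s^{(r)}_i=t_i$. This yields $\tilde\bs^{(r)}\le\bs^{(r)}$ with $\sum_r\tilde\bs^{(r)}=\bt$, and by monotonicity $\calJ(\tilde\bs^{(r)})\le\calJ(\bs^{(r)})$. Padding with zero vectors (using $\calJ(\bfm 0)=0$) produces exactly $k$ summands, which is a competitor in \eqref{def-rate-fct}. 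Taking the infimum over $m$ and over the vectors proves ``$\le$''. The inner infimum is therefore attained for $m=k$, which justifies writing $\min_{m}$ in \eqref{rate-fct-second}.

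Finally, monotonicity of $\calI_\calJ$ follows from \eqref{rate-fct-second}. If $\bt\le\bt'$, every family with $\sum_r\bt^{(r)}\ge\bt'$ also satisfies $\sum_r\bt^{(r)}\ge\bt$. The feasible set for $\bt$ is therefore larger, so $\calI_\calJ(\bt)\le\calI_\calJ(\bt')$.
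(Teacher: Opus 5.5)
Your proof is correct and follows the paper's argument. Both shrink coordinatewise (using monotonicity of $\calJ$) to pass from $\ge$ to $=$, and both freeze the directions so that $\blambda\mapsto\sum_r\lambda_r^\alpha\calJ(\bv^{(r)})$ is concave and can be pushed to an extreme point with at most $k$ nonzero coordinates. The differences are minor: the paper shrinks first, so its polytope $\{\ba\ge\bfm 0:\sum_r a_r\bu^{(r)}=\bt\}$ is compact and it needs no Minkowski--Weyl or recession-cone step, and your check that $\calJ(\bfm 0)=0$ supplies a fact the paper uses tacitly when it pads with zero summands.
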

    
    \begin{proof}
    First, we note that $\calJ$ is lower semicontinuous, componentwise nondecreasing, and $\alpha$-homogeneous (for some $\alpha\in(0,1)$, see Lemma \ref{lem:homogeneity_barJ}). Given $\bt^{(1)},\dots,\bt^{(m)}\in\bbR^k_+$ with $\sum_{r=1}^m \bt^{(r)}\ge \bt$, define recursively
    $\bl^{(1)}:=\bt$ and
    \[
    \bu^{(r)}:=\min\{\bt^{(r)},\bl^{(r)}\},\qquad \bl^{(r+1)}:=\bl^{(r)}-\bu^{(r)},\qquad r=1,\dots,m,
    \]
    where minima are componentwise. Then $\bu^{(r)}\le \bt^{(r)}$ and $\sum_{r=1}^m \bu^{(r)}=\bt$ by construction (using an induction argument), hence
    $\sum_{r=1}^m \calJ(\bu^{(r)})\le \sum_{r=1}^m \calJ(\bt^{(r)})$.
    Therefore we can assume equality in the r.h.s. of \eqref{rate-fct-second}.    
    Now, fix $m\in\NN$ and $\bt^{(1)},\dots,\bt^{(m)}\in\bbR^k_+$ with $\sum_{r=1}^m \bt^{(r)}=\bt$.
Let $\hat a_r:=\|\bt^{(r)}\|_1$ and set $\be_1:=(1,0,\dots,0)\in\bbR^k$. Define
$\bu^{(r)}:=\bt^{(r)}/\hat a_r$ if $\hat a_r>0$, and $\bu^{(r)}:=\be_1$ if $\hat a_r=0$.
Then $\|\bu^{(r)}\|_1=1$ and $\bt^{(r)}=\hat a_r\bu^{(r)}$ for all $r$, hence by $\alpha$-homogeneity,
$\sum_{r=1}^m \calJ(\bt^{(r)})=\sum_{r=1}^m \hat a_r^\alpha\,\calJ(\bu^{(r)})$. Consider the convex polytope
\[
P:=\Big\{\ba=(a_1,\dots,a_m)\in[0,\infty)^m:\ \sum_{r=1}^m a_r\bu^{(r)}=\bt\Big\},
\]
and $F:P\to[0,\infty)$ given by $F(\ba):=\sum_{r=1}^m a_r^\alpha\,\calJ(\bu^{(r)})$.
Note that $\hat{\ba}:=(\hat a_1,\dots,\hat a_m)\in P$ and $F(\hat{\ba})=\sum_{r=1}^m \calJ(\bt^{(r)})$.
The function $F$ is concave, hence there exists an extreme point $\ba^\ast\in P$ with $F(\ba^\ast)\le F(\hat{\ba})$.
If $\ba^\ast$ has more than $k$ positive components, then the corresponding vectors $\bu^{(r)}\in\bbR^k$
(with $a_r^\ast>0$) are linearly dependent, so there exists a nonzero $\bd=(d_1,\dots,d_m)$ with
$d_r=0$ whenever $a_r^\ast=0$ and $\sum_{r=1}^m d_r\bu^{(r)}=0$. For any $\varepsilon>0$ such that
$\ba^\ast\pm\varepsilon\bd\in[0,\infty)^m$, we have
$\ba^\ast\pm\varepsilon\bd\in P$, contradicting extremality of $\ba^\ast$. Hence $\ba^\ast$ has at most $k$
positive components. Writing these indices as $r_1,\dots,r_\ell$ with $\ell\le k$ and setting
$\tilde\bt^{(j)}:=a_{r_j}^\ast\,\bu^{(r_j)}$, we obtain $\bt=\sum_{j=1}^\ell \tilde\bt^{(j)}$ and
\[
\sum_{j=1}^\ell \calJ(\tilde\bt^{(j)})=\sum_{j=1}^\ell (a_{r_j}^\ast)^\alpha\,\calJ(\bu^{(r_j)})
=F(\ba^\ast)\le F(\hat{\ba})=\sum_{r=1}^m \calJ(\bt^{(r)}).
\]
Since $\calJ$ is lower semicontinuous, it attains its infimum on the compact set. The fact that $\calI_\calJ$ is nondecreasing follows from the fact that $\calJ$ is nondecreasing and by \eqref{def-rate-fct}.
\end{proof} 

\subsection{Proof of Theorem \ref{thm:LDP-multi}}

In this section, we provide the proof of the main result, starting with the lower bound. 

\begin{lemma}[Lower bound of Theorem \ref{thm:LDP-multi}]\label{lem:lower-bound}
 In the setup of Theorem \ref{thm:LDP-multi},
 \[
\liminf_{N\to\infty}\frac{1}{x_N^\alpha}\log \PP\left(\frac{1}{x_N}\sum_{i=1}^N \bX_i \ge \bt\right)
\geq -\calI_{\calJ}(\bt).
\]
\end{lemma}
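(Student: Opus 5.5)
The plan is to force at most $k$ big jumps and let the remaining summands behave typically. Since $\calJ$ is lower semicontinuous, the minimum in \eqref{def-rate-fct} is attained; fix a minimizer $\bt^{(1)},\dots,\bt^{(k)}\in\bbR_+^k$ with $\sum_r\bt^{(r)}=\bt$, and let $R\subseteq\{1,\dots,k\}$ be the set of indices with $\bt^{(r)}\neq\bfm 0$. Fix $\delta>0$ and set $\bs^{(r)}:=\bt^{(r)}+\delta\bfm 1$ for $r\in R$. These vectors have all components strictly positive.

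For $N\ge k$ we lower bound the probability by that of the intersection of the following events, using independence:
\[
E_r:=\{\bX_r\ge x_N\bs^{(r)}\}\ (r\in R),\qquad F_N:=\Big\{\sum_{i\notin R}\bX_i\ge -\tfrac{\delta}{2}x_N\bfm 1\Big\}.
\]
On $\bigcap_r E_r\cap F_N$ we get $\sum_i\bX_i\ge x_N(\bt+|R|\delta\bfm 1-\tfrac{\delta}{2}\bfm 1)\ge x_N\bt$, because $|R|\ge1$ when $\bt>0$. Hence
\[
\PP\Big(\tfrac{1}{x_N}\sum_{i=1}^N\bX_i\ge\bt\Big)\ge \prod_{r\in R}\PP(\bX_1\ge x_N\bs^{(r)})\cdot\PP(F_N).
\]
By \Cref{def:multiSE}, $x_N^{-\alpha}\log\PP(\bX_1\ge x_N\bs^{(r)})\to-\calJ(\bs^{(r)})$. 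It remains to show $\PP(F_N)\to1$. For this, write $S:=\sum_{i\notin R}\bX_i$, a sum of at most $N$ i.i.d.\ centered vectors with finite second moments by Lemma~\ref{lem:finite-second}. Chebyshev componentwise gives
\[
\PP(F_N^c)\le\sum_j\frac{N\,\EE[X_{1,j}^2]}{(\delta x_N/2)^2}\to0,
\]
since $x_N\gg N^{1/(2-\alpha)}\ge N^{1/2}$. So $\log\PP(F_N)=o(x_N^\alpha)$.

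Combining these, we get
\[
\liminf_N x_N^{-\alpha}\log\PP\Big(\tfrac{1}{x_N}\sum_{i=1}^N\bX_i\ge\bt\Big)\ge-\sum_{r\in R}\calJ(\bt^{(r)}+\delta\bfm 1).
\]
The main subtlety is letting $\delta\downarrow0$: lower semicontinuity gives the wrong inequality for this step. Instead we use monotonicity and homogeneity. Choose $\lambda_\delta\downarrow 1$ with $\bt^{(r)}+\delta\bfm 1\le\lambda_\delta\bt^{(r)}$; this only works when $\bt^{(r)}$ has all components positive, which is not guaranteed.

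The cleaner fix is to avoid perturbing: apply Definition~\ref{def:multiSE} directly at $\bs^{(r)}=\bt^{(r)}$. This is allowed because $\bt^{(r)}$ has at least one positive component, and then $\bX_r\ge x_N\bt^{(r)}$ exactly. The slack must instead come from shrinking the target. Replace $\bt$ by $(1+\eta)\bt$: take the minimizer for $(1+\eta)\bt$, which is $(1+\eta)\bt^{(r)}$ by homogeneity. On the corresponding events we get $\sum_i\bX_i\ge x_N(1+\eta)\bt-\tfrac{\eta}{2}x_N\min_j t_j\bfm 1\ge x_N\bt$, using $F_N$ with $\delta=\eta\min_j t_j$. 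This yields the bound $-(1+\eta)^\alpha\calI_\calJ(\bt)$, and letting $\eta\downarrow0$ finishes the proof.
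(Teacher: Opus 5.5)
Your final argument is correct and is essentially the paper's proof: you force the (at most $k$) nonzero pieces of a decomposition of the inflated target $(1+\eta)\bt$ as individual big jumps, keep the remaining sum above $-O(\eta x_N)$ with probability tending to one, and remove the slack via $\alpha$-homogeneity of $\calI_{\calJ}$. The differences from the paper are cosmetic: you use an exact minimizer (justified by lower semicontinuity and compactness) instead of an $\varepsilon$-near-minimizer, and Chebyshev instead of the CLT; you were also right to discard the $\delta\mathbf{1}$-perturbation, which would have required upper semicontinuity of $\calJ$.
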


\begin{proof}
    Write $\bS_N:=\sum_{i=1}^N\bX_i$.
    Fix $\delta\in(0,1)$.
    We prove
    \begin{equation}\label{eq:lb-delta}
    \liminf_{N\to\infty}\frac{1}{x_N^\alpha}\log \PP(\bS_N\ge x_N\bt)
    \ge -\calI_\calJ((1+\delta)\bt),
    \end{equation}
    and then let $\delta\downarrow0$ using the $\alpha$-homogeneity of $\calI_\calJ$. For every fixed $\bt\in\RR_+^k$,
    \begin{equation}\label{eq:lb-one-point}
    \lim_{x\to\infty}\frac{1}{x^\alpha}\log \PP(\bX_1\ge x\bt)= -\calJ(\bt).
    \end{equation}
    Fix $\varepsilon>0$. By Lemma \ref{lem:Ij_k_summands}, there exist $m\in\NN$ and vectors
    $\bu^{(1)},\dots,\bu^{(m)}\in\RR_+^k$ such that
    \begin{equation}\label{eq:lb-choice}
    \sum_{r=1}^m \bu^{(r)}\ge (1+\delta)\bt,
    \qquad
    \sum_{r=1}^m \calJ(\bu^{(r)})\le \calI_\calJ((1+\delta)\bt)+\varepsilon.
    \end{equation}
    Define the events
    \[
    A_N:=\bigcap_{r=1}^m \{\bX_r\ge x_N\bu^{(r)}\},
    \qquad
    B_N:=\Big\{\sum_{i=m+1}^N \bX_i\ge -\delta x_N\bt\Big\}.
    \]
    On $A_N\cap B_N$,
    \[
    \bS_N=\sum_{r=1}^m \bX_r+\sum_{i=m+1}^N \bX_i
    \ge x_N\sum_{r=1}^m \bu^{(r)}-\delta x_N\bt
    \ge x_N\bt
    \]
    by \eqref{eq:lb-choice}. Hence,
    \begin{equation}\label{eq:lb-lower-event}
    \PP(\bS_N\ge x_N\bt)\ge \PP(A_N\cap B_N)=\PP(A_N)\PP(B_N),
    \end{equation}
    where the last equality uses independence. By \eqref{eq:lb-one-point},
    \[
    \lim_{N\to\infty}\frac{1}{x_N^\alpha}\log \PP(\bX_1\ge x_N\bu^{(r)})=-\calJ(\bu^{(r)}),
    \qquad r=1,\dots,m.
    \]
    Therefore,
    \begin{equation}\label{eq:lb-A}
    \lim_{N\to\infty}\frac{1}{x_N^\alpha}\log \PP(A_N)
    =
    \sum_{r=1}^m \lim_{N\to\infty}\frac{1}{x_N^\alpha}\log \PP(\bX_1\ge x_N\bu^{(r)})
    =
    -\sum_{r=1}^m \calJ(\bu^{(r)}).
    \end{equation}
    By Slutsky's theorem and the central limit theorem, $\PP(B_N) \to 1$, and so
    \begin{equation}\label{eq:lb-B}
    \liminf_{N\to\infty}\frac{1}{x_N^\alpha}\log \PP(B_N)=0,
    \end{equation}
    since the random variables are centered, have finite second moments (see Lemma \ref{lem:finite-second}) and $x_N /\sqrt{N} \to\infty$.
Combining \eqref{eq:lb-lower-event}, \eqref{eq:lb-A} and \eqref{eq:lb-B},
    \[
    \liminf_{N\to\infty}\frac{1}{x_N^\alpha}\log \PP(\bS_N\ge x_N\bt)
    \ge
    -\sum_{r=1}^m \calJ(\bu^{(r)}).
    \]
    Using \eqref{eq:lb-choice} and then letting $\varepsilon\downarrow0$ yields \eqref{eq:lb-delta}. Finally, by $\alpha$-homogeneity of $\calI_\calJ$, letting $\delta\downarrow0$ and taking the supremum over $m\in\NN$ and $\bu^{(1)},\dots,\bu^{(m)}\in\RR_+^k$ such that $\sum_{r=1}^m \bu^{(r)}\ge (1+\delta)\bt$ gives the claimed lower bound (see Lemma \ref{lem:Ij_k_summands}).
    \end{proof}

We continue with the proof of the upper bound and start with a preliminary lemma.

\begin{lemma}\label{lem:upperbound-small-part}
    Fix $\alpha\in(0,1)$ and $\bt\in(0,\infty)^k$. Let $(\bX_i)_{i\ge1}$ be i.i.d.\ centered random vectors with values in $\bbR^k$ and assume that $\bX_1$ satisfies \Cref{def:multiSE} with rate $(\calJ,\alpha)$ and Assumption \ref{assu-LT}. Let $(x_N)_{N\ge1}$ be a sequence of positive numbers such that $x_N\to\infty$ and $x_N N^{-1/(2-\alpha)}\to\infty$. Then
    \[
    \limsup_{\varepsilon_0\downarrow 0}\ \limsup_{N\to\infty}\frac{1}{x_N^\alpha}
    \log \PP\Big(\sum_{i=1}^N \bX_i \ge x_N\bt,\ \forall i\le N:\ \bX_i \le \varepsilon_0 x_N \bt\Big)
    =-\infty.
    \]
\end{lemma}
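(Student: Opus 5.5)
Since the event forces $\sum_{i}X_{i,1}\ge x_N t_1$ with every $X_{i,1}\le \varepsilon_0 x_N t_1$, I reduce to the first coordinate. It suffices to show
\[
\limsup_{\varepsilon_0\downarrow0}\limsup_{N\to\infty}\frac{1}{x_N^\alpha}\log\PP\Big(\sum_{i=1}^N Y_i\ge x_N,\ \forall i:\ Y_i\le \varepsilon_0 x_N\Big)=-\infty,
\]
where $Y_i:=X_{i,1}/t_1$. The $Y_i$ are i.i.d., centered, and have finite variance $\sigma^2$. By Lemma~\ref{lem:finite-second} they satisfy $\PP(Y_1\ge s)\le e^{-Cs^\alpha}$ for large $s$. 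This is the classical one-dimensional truncated-sum estimate of Nagaev type, and I prove it with an exponential Chebyshev bound applied to the truncated variables $\tilde Y_i:=Y_i\mathbbm 1\{Y_i\le\varepsilon_0x_N\}$. The event is contained in $\{\sum_i\tilde Y_i\ge x_N\}$, so for any $\lambda>0$,
\[
\PP(\cdots)\le e^{-\lambda x_N}\big(\EE e^{\lambda\tilde Y_1}\big)^N .
\]

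I choose $\lambda:=\varepsilon_0^{-\beta}x_N^{\alpha-1}$ with a fixed $\beta\in(1-\alpha,1)$, say $\beta=1-\alpha/2$. Then $\lambda x_N=\varepsilon_0^{-\beta}x_N^\alpha$, which is $x_N^\alpha$ times a constant that blows up as $\varepsilon_0\downarrow0$. It remains to show $N\log\EE e^{\lambda\tilde Y_1}=o(x_N^\alpha)$ for each fixed small $\varepsilon_0$. Since $\EE Y_1=0$, I use $e^u\le1+u+\tfrac{u^2}{2}e^{u_+}$ and split the range of $Y$ into three pieces.
\begin{itemize}
\item On $\{Y\le M\}$ with $M:=\eta/\lambda$ for small fixed $\eta$, the contribution is at most $\lambda\EE\tilde Y_1+C\lambda^2\sigma^2$. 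Moreover $\EE\tilde Y_1=-\EE[Y\mathbbm 1\{Y>\varepsilon_0x_N\}]\le0$.
\item On $\{M<Y\le\varepsilon_0x_N\}$, I bound $\EE[e^{\lambda Y}\mathbbm 1\{M<Y\le\varepsilon_0x_N\}]$ via integration by parts against the tail $e^{-Cs^\alpha}$.
\end{itemize}
The first piece gives $N\lambda^2\sigma^2=\sigma^2\varepsilon_0^{-2\beta}Nx_N^{2\alpha-2}$. This is $o(x_N^\alpha)$ exactly when $N=o(x_N^{2-\alpha})$, which is the hypothesis $x_NN^{-1/(2-\alpha)}\to\infty$. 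This is where the scale condition enters.

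The main obstacle is the intermediate range. There the exponent to control is $\lambda s-Cs^\alpha$ for $s\in[M,\varepsilon_0x_N]$. Because $s\mapsto\lambda s-Cs^\alpha$ is convex for $\alpha<1$, its maximum on the interval sits at an endpoint. At $s=\varepsilon_0x_N$ the value is
\[
\big(\varepsilon_0^{1-\beta}-C\varepsilon_0^\alpha\big)x_N^\alpha\le-\tfrac{C}{2}\varepsilon_0^\alpha x_N^\alpha,
\]
using $1-\beta>\alpha$ and $\varepsilon_0$ small. At $s=M$ the value is $\eta-CM^\alpha$, with $M\asymp x_N^{1-\alpha}\to\infty$. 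The integral is therefore at most polynomial in $x_N$ times $e^{-c\min(\varepsilon_0^\alpha x_N^\alpha,\,M^\alpha)}$. Multiplied by $N\le x_N^{2-\alpha}$, this is $o(1)$, hence also $o(x_N^\alpha)$. I have to take care with the lower endpoint. There $\lambda M=\eta$ is bounded, so the bound reduces to $N\,\PP(Y>M)\le x_N^{2-\alpha}e^{-CM^\alpha}\to0$, which holds because $M$ is a positive power of $x_N$.

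Combining the pieces, for each fixed small $\varepsilon_0$,
\[
\limsup_N x_N^{-\alpha}\log\PP(\cdots)\le-\varepsilon_0^{-\beta}.
\]
Letting $\varepsilon_0\downarrow0$ gives $-\infty$, which proves the lemma.
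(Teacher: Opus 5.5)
\textbf{The exponent $\beta$ is on the wrong side of $1-\alpha$.} Your proof breaks exactly at the step you flagged as the main obstacle. You take $\beta\in(1-\alpha,1)$, e.g.\ $\beta=1-\alpha/2$. But the upper-endpoint estimate then invokes ``$1-\beta>\alpha$'', which is false for this choice, since $1-\beta=\alpha/2<\alpha$.

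At $s=\varepsilon_0x_N$ the exponent is $(\varepsilon_0^{\alpha/2}-C\varepsilon_0^{\alpha})x_N^\alpha=\varepsilon_0^{\alpha/2}(1-C\varepsilon_0^{\alpha/2})x_N^\alpha$. This is strictly positive for all small $\varepsilon_0$, which is precisely the regime you need.

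This is not just a loss in your estimate. Taking $\bt=(1,-1,\dots,-1)$ in Definition~\ref{def:multiSE}, as in the proof of Lemma~\ref{lem:finite-second}, gives a matching lower bound $\PP(Y_1\ge s)\ge e^{-(d+o(1))s^\alpha}$ for some $d<\infty$. Fix a small $\gamma\in(0,1)$ with $d\gamma^\alpha<C$. Then the event $\{\gamma\varepsilon_0x_N<Y_1\le\varepsilon_0x_N\}$ alone yields $\log\EE e^{\lambda\tilde Y_1}\ge(\gamma\varepsilon_0^{1-\beta}-d\gamma^\alpha\varepsilon_0^\alpha-o(1))x_N^\alpha$. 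Because $1-\beta<\alpha$, this is at least $c\,x_N^\alpha$ with $c>0$ once $\varepsilon_0$ is small. Consequently $N\log\EE e^{\lambda\tilde Y_1}\ge cNx_N^\alpha\gg\lambda x_N=\varepsilon_0^{-\beta}x_N^\alpha$, and the Chernoff bound is vacuous. Intuitively, a tilt $\lambda\gg(\varepsilon_0x_N)^{\alpha-1}$ makes jumps just below the truncation level too cheap.

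\textbf{The repair.} Take $\beta\in(0,1-\alpha)$, e.g.\ $\beta=(1-\alpha)/2$. Then $\varepsilon_0^{1-\beta}\le\frac C2\varepsilon_0^\alpha$ for small $\varepsilon_0$. Your remaining steps then go through: the quadratic term $N\lambda^2\sigma^2=o(x_N^\alpha)$ via $N=o(x_N^{2-\alpha})$, the convexity of $s\mapsto\lambda s-Cs^\alpha$, and the endpoint $M$. You still obtain the bound $-\varepsilon_0^{-\beta}\to-\infty$.

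With this choice the three-range split is in fact unnecessary. For $0<s\le\varepsilon_0x_N$ one has $x_N^{\alpha-1}\le(s/\varepsilon_0)^{\alpha-1}$, hence $\lambda s\le\varepsilon_0^{1-\alpha-\beta}s^\alpha\le\frac C2 s^\alpha$. This bounds the Taylor remainder $\EE[\tilde Y_1^2e^{\lambda(\tilde Y_1)_+}]$ uniformly in $N$ after a single integration by parts against the tail.

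That is exactly the paper's argument. It uses the borderline tilt $\eta(\varepsilon_0x_Nt_j)^{\alpha-1}$ with a fixed $\eta<C$, i.e.\ $\beta=1-\alpha$ with a small constant, together with $\lambda s\le\eta s^\alpha$ on $[0,\varepsilon_0x_Nt_j]$. It arrives at the bound $-\eta t_j^\alpha\varepsilon_0^{\alpha-1}$.
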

    
\begin{proof}
    Fix $\varepsilon_0\in(0,1)$ and set
    \[
    A_N(\varepsilon_0)
    :=\Big\{\sum_{i=1}^N\bX_i\ge x_N\bt,\ \forall i\le N:\ \bX_i\le \varepsilon_0 x_N\bt\Big\}.
    \]
    Then $A_N(\varepsilon_0)\subseteq \bigcap_{j=1}^k A_{N,j}(\varepsilon_0)$, where
    \[
    A_{N,j}(\varepsilon_0)
    :=\Big\{\sum_{i=1}^N X_{i,j}\ge x_N t_j,\ \forall i\le N:\ X_{i,j}\le \varepsilon_0 x_N t_j\Big\}.
    \]
    Hence, $\PP(A_N(\varepsilon_0))\le \min_{1\le j\le k}\PP(A_{N,j}(\varepsilon_0))$.
    It suffices to show that for each fixed $j$ there exists $c_*=c_*(j)>0$ such that
    \begin{equation}\label{eq:1d-target-correct}
    \limsup_{N\to\infty}\frac{1}{x_N^\alpha}\log \PP(A_{N,j}(\varepsilon_0))
    \le -\frac{c_*}{\varepsilon_0^{1-\alpha}}.
    \end{equation}
    Letting $\varepsilon_0\downarrow 0$ then yields the claim. Fix $j$ and write $\xi_i:=X_{i,j}$ and $t:=t_j$. Set $b_N:=\varepsilon_0 x_N t$, so that
    \[
    \PP(A_{N,j}(\varepsilon_0))
    =\PP\Big(\sum_{i=1}^N \xi_i\ge x_N t,\ \forall i\le N:\ \xi_i\le b_N\Big).
    \]
    By Lemma~\ref{lem:finite-second}, there exist constants $c,C>0$ such that
    \begin{equation}\label{eq:tail-1d}
    \PP(\xi_1>x)\le c\,e^{-C x^\alpha},\qquad x\ge 0.
    \end{equation}
    Fix $\eta\in(0,C)$. By Markov's inequality,
    \begin{align}
    \PP(A_{N,j}(\varepsilon_0))
    &=\PP\Big(\exp\Big(\eta b_N^{\alpha-1}\sum_{i=1}^N\xi_i\Big)\ge \exp\big(\eta b_N^{\alpha-1}x_N t\big),\ \forall i\le N:\ \xi_i\le b_N\Big)\notag\\
    &\le \exp\big(-\eta b_N^{\alpha-1}x_N t\big)
    \Big(\EE\big[\exp(\eta b_N^{\alpha-1}\xi_1)\mathbf 1_{\{\xi_1\le b_N\}}\big]\Big)^N.
    \label{eq:chernoff-correct}
    \end{align}
    Moreover,
    \begin{equation}\label{eq:theta-mainterm-correct}
    \eta b_N^{\alpha-1}x_N t
    =\eta(\varepsilon_0 x_N t)^{\alpha-1}x_N t
    =\frac{\eta t^\alpha}{\varepsilon_0^{1-\alpha}}\,x_N^\alpha.
    \end{equation}
    
    We bound the truncated moment generating function. Using the identity
 \[
    e^{y}
    =
    1+y
    +y^2\int_0^1(1-s)e^{sy}\,ds,
    \]
with $y = \eta b_N^{\alpha-1}\xi_1$, 
   multiplying by $\mathbf 1_{\{\xi_1\le b_N\}}$ and taking expectations gives
    \begin{align}
    \EE\big[e^{\eta b_N^{\alpha-1}\xi_1}\mathbf 1_{\{\xi_1\le b_N\}}\big]
    &= \EE\big[\mathbf 1_{\{\xi_1\le b_N\}}\big]
    +\eta b_N^{\alpha-1}\EE\big[\xi_1\mathbf 1_{\{\xi_1\le b_N\}}\big]\notag\\
    &\quad
    +\eta^2 b_N^{2\alpha-2}\EE\Big[\xi_1^2\mathbf 1_{\{\xi_1\le b_N\}}
    \int_0^1(1-s)e^{s\eta b_N^{\alpha-1}\xi_1}\,ds\Big].
    \label{eq:mgf-identity}
    \end{align}
    Since $\EE[\xi_1]=0$,
    \[
    \EE\big[\xi_1\mathbf 1_{\{\xi_1\le b_N\}}\big]
    =-\EE\big[\xi_1\mathbf 1_{\{\xi_1>b_N\}}\big]\le 0,
    \]
    hence
    \begin{equation}\label{eq:mgf-upper-reduced}
    \EE\big[e^{\eta b_N^{\alpha-1}\xi_1}\mathbf 1_{\{\xi_1\le b_N\}}\big]
    \le 1+\eta^2 b_N^{2\alpha-2}\EE\Big[\xi_1^2\mathbf 1_{\{\xi_1\le b_N\}}
    \int_0^1(1-s)e^{s\eta b_N^{\alpha-1}\xi_1}\,ds\Big].
    \end{equation}
    For $\xi_1\le 0$ we have $e^{s\eta b_N^{\alpha-1}\xi_1}\le 1$, hence
    \[
    \EE\Big[\xi_1^2\mathbf 1_{\{\xi_1\le 0\}}\int_0^1(1-s)e^{s\eta b_N^{\alpha-1}\xi_1}\,ds\Big]
    \le \EE[\xi_1^2]<\infty.
    \]
    For $0<\xi_1\le b_N$, fix $s\in[0,1]$ and use integration by parts for Stieltjes integrals to obtain
    \begin{equation}\label{eq:ibp}
    \EE\big[\xi_1^2 e^{s\eta b_N^{\alpha-1}\xi_1}\mathbf 1_{\{0<\xi_1\le b_N\}}\big]
    \le \int_0^{b_N} \big(2x+s\eta b_N^{\alpha-1}x^2\big)e^{s\eta b_N^{\alpha-1}x}\,\PP(\xi_1>x)\,dx.
    \end{equation}
    Since $\alpha-1<0$ and $0<x\le b_N$, we have $b_N^{\alpha-1}\le x^{\alpha-1}$, hence
    \[
    s\eta b_N^{\alpha-1}x \le \eta x^\alpha,
    \qquad
    s\eta b_N^{\alpha-1}x^2 \le \eta x^{\alpha+1}.
    \]
    Using \eqref{eq:tail-1d} in \eqref{eq:ibp} yields
    \[
    \EE\big[\xi_1^2 e^{s\eta b_N^{\alpha-1}\xi_1}\mathbf 1_{\{0<\xi_1\le b_N\}}\big]
    \le c\int_0^\infty (2x+\eta x^{\alpha+1})e^{-(C-\eta)x^\alpha}\,dx
    =:C_1<\infty,
    \]
    uniformly in $N$ and $s\in[0,1]$. Therefore, for some $C_2<\infty$ and all large $N$,
    \[
    \EE\Big[\xi_1^2\mathbf 1_{\{\xi_1\le b_N\}}\int_0^1(1-s)e^{s\eta b_N^{\alpha-1}\xi_1}\,ds\Big]\le C_2,
    \]
    and \eqref{eq:mgf-upper-reduced} gives
    \[
    \EE\big[e^{\eta b_N^{\alpha-1}\xi_1}\mathbf 1_{\{\xi_1\le b_N\}}\big]\le 1+C_2\eta^2 b_N^{2\alpha-2},
    \qquad
    \log \EE\big[e^{\eta b_N^{\alpha-1}\xi_1}\mathbf 1_{\{\xi_1\le b_N\}}\big]\le C_2\eta^2 b_N^{2\alpha-2},
    \]
    for all large $N$. Combining with \eqref{eq:chernoff-correct} and \eqref{eq:theta-mainterm-correct} yields
    \[
    \frac{1}{x_N^\alpha}\log \PP(A_{N,j}(\varepsilon_0))
    \le -\frac{\eta t^\alpha}{\varepsilon_0^{1-\alpha}} + C_2\eta^2\frac{N b_N^{2\alpha-2}}{x_N^\alpha}.
    \]
    Since $b_N^{2\alpha-2}=(\varepsilon_0 t)^{2\alpha-2}x_N^{2\alpha-2}$,
    \[
    \frac{N b_N^{2\alpha-2}}{x_N^\alpha}
    =(\varepsilon_0 t)^{2\alpha-2}\frac{N}{x_N^{2-\alpha}}
    \longrightarrow 0
    \qquad (N\to\infty),
    \]
    because $x_N N^{-1/(2-\alpha)}\to\infty$. Thus \eqref{eq:1d-target-correct} holds with $c_*=\eta t_j^\alpha$.
    Letting $\varepsilon_0\downarrow 0$ completes the proof.
    \end{proof}        

\begin{lemma}[Upper bound of Theorem \ref{thm:LDP-multi}]\label{lem:upper-bound-multi}
In the setup of Theorem \ref{thm:LDP-multi},
\[
\limsup_{N\to\infty}\frac{1}{x_N^\alpha}\log \PP\left(\frac{1}{x_N}\sum_{i=1}^N \bX_i \ge \bt\right)
\le -\calI_{\calJ}(\bt).
\]
\end{lemma}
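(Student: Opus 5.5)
We prove the upper bound by splitting the event according to how many summands are "big", and showing that at most boundedly many big summands matter. Write $\bS_N=\sum_{i=1}^N\bX_i$. Fix $\varepsilon_0\in(0,1)$ and call index $i$ \emph{big} if $\bX_i\not\le \varepsilon_0 x_N\bt$, i.e. $X_{i,j}>\varepsilon_0 x_N t_j$ for some $j$.

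Lemma \ref{lem:upperbound-small-part} already shows that the event with no big index contributes $-\infty$ as $\varepsilon_0\downarrow0$. We handle the remaining part in two regimes.

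\emph{Many big indices.} For a fixed integer $M$, the event that at least $M$ indices are big has probability at most
\[
\binom{N}{M}\Big(\sum_{j=1}^k\PP(X_{1,j}>\varepsilon_0x_Nt_j)\Big)^M
\le N^M\big(k\,c\,e^{-C\varepsilon_0^\alpha t_{\min}^\alpha x_N^\alpha}\big)^M,
\]
using Lemma \ref{lem:finite-second}. Since $\log N=o(x_N^\alpha)$ (because $x_N\ge N^{1/(2-\alpha)}$ eventually), this is $\exp(-M C\varepsilon_0^\alpha t_{\min}^\alpha x_N^\alpha(1+o(1)))$. Choosing $M=M(\varepsilon_0)$ large makes this exponent smaller than $-\calI_\calJ(\bt)-1$.

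\emph{Between $1$ and $M-1$ big indices.} By a union bound over the at most $N^{M}$ choices of the big index set $B$, with $|B|=m<M$, and exchangeability, it suffices to bound
\[
\PP\Big(\bS_N\ge x_N\bt,\ \bX_1,\dots,\bX_m \text{ big},\ \bX_i\le\varepsilon_0x_N\bt\ (i>m)\Big).
\]
We discretize the positive parts of the big vectors. Each coordinate $(X_{r,j})_+/x_N$ ranges over $[0,L]$ up to a grid of mesh $\delta$; values exceeding $L$ cost more than $\calI_\calJ(\bt)+1$ for $L$ large by the marginal tail bound. For a grid point $\bu^{(r)}$ with $\bX_r/x_N\in\bu^{(r)}+[0,\delta)^k$ on the positive coordinates, independence gives
\[
\prod_{r\le m}\PP(\bX_1\ge x_N\bu^{(r)}-\text{(negative coordinates free)}).
\]
Here the event $\{\bX\ge x\bv\}$ with $\bv$ having $-\infty$ in some coordinates is handled as in Lemma \ref{lem:finite-second}: replace $-\infty$ by $-L'$ and pay $\exp(-c L'^\alpha x_N^\alpha)$ for the complement. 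Definition \ref{def:multiSE} then gives the rate $\calJ(\bu^{(r)})$ with $\bu^{(r)}\ge0$. The remaining sum $\sum_{i>m}\bX_i$ must then exceed $x_N(\bt-\sum_r\bu^{(r)}-m\delta\bfm1)$ coordinatewise while all its summands are small. We apply the argument of Lemma \ref{lem:upperbound-small-part} coordinatewise to the residual target $\bs=(\bt-\sum_r\bu^{(r)}-m\delta\bfm1)_+$. In every coordinate $j$ with $s_j>0$ this costs at least $\eta s_j t_j^{\alpha-1}\varepsilon_0^{\alpha-1}$, since truncation is at $\varepsilon_0x_Nt_j$; the Chernoff computation there works for any target $s_j$. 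Thus either the residual is at most $\gamma\bfm1$ with $\gamma$ small, or the cost is at least $c\gamma\varepsilon_0^{\alpha-1}$, which is huge. In the first case $\sum_r\bu^{(r)}\ge \bt-(\gamma+m\delta)\bfm 1$, so the rate is at least $\calI_\calJ(\bt-(\gamma+M\delta)\bfm1)$ by Lemma \ref{lem:Ij_k_summands}. The number of grid points is finite and independent of $N$, and the $N^M$ combinatorial factor is $e^{o(x_N^\alpha)}$.

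Finally we let $\delta,\gamma\downarrow0$ and then $\varepsilon_0\downarrow0$ in the right order, first choosing $\gamma$ so that $\gamma\varepsilon_0^{\alpha-1}\to\infty$ is ensured, e.g. $\gamma=\varepsilon_0^{(1-\alpha)/2}$. Continuity of $\calI_\calJ$ from below at $\bt$ gives $\limsup\le-\calI_\calJ(\bt)$. This follows from homogeneity: $\calI_\calJ(\lambda\bt)=\lambda^\alpha\calI_\calJ(\bt)$ and $\bt-\gamma'\bfm1\ge(1-\gamma'/t_{\min})\bt$.

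The main obstacle is the middle regime. One must reconcile the one-sided (orthant) nature of Definition \ref{def:multiSE}, which only controls $\{\bX\ge x\bu\}$, with the need to localize the big vectors, whose other coordinates may be negative or small. One must also make the residual small-jumps estimate uniform over the finitely many grid configurations and over the choice of the subset $B$.
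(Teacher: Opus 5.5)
Your proposal is correct and follows essentially the same route as the paper. Both arguments truncate very large coordinates (the paper's event $\calE_{\mathrm{box}}$) and then split by the number of indices with $\bX_i\not\le\varepsilon_0x_N\bt$: none is handled by Lemma~\ref{lem:upperbound-small-part}, at least $M(\varepsilon_0)$ by the binomial bound, and boundedly many by a finite grid together with Definition~\ref{def:multiSE}, Lemma~\ref{lem:Ij_k_summands} and homogeneity. The differences are minor. The paper decouples big and small parts by the cruder split ``some coordinate of the small sum exceeds $\delta x_N t_j$, or the big sum exceeds $(1-\delta)x_N\bt$'', where you use a configuration-dependent residual target with $\gamma=\varepsilon_0^{(1-\alpha)/2}$. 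It also grids the big vectors on $[-\lambda\bt,\lambda\bt]$ and reads off the rate $\calJ(\bd_+)$ directly from the definition, where you truncate negative coordinates at $-L'$.
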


\begin{proof}
    Fix $\bt=(t_1,\ldots,t_k)\in(0,\infty)^k$ and recall the rate function $\calI_\calJ$ from \eqref{def-rate-fct} and  \eqref{rate-fct-second}. Write
    \[
    \PP\Big(\frac{1}{x_N}\sum_{i=1}^N \bX_i\ge \bt\Big)
    =\PP\Big(\sum_{i=1}^N \bX_i\ge x_N\bt\Big).
    \]
    By Lemma~\ref{lem:finite-second} there exist $c>0$ and, for each $j$, some $u_j>0$ such that for all $u\ge u_j$,
    $\PP(|X_{1,j}|\ge u)\le 2e^{-c u^\alpha}$.
    Moreover, $x_N N^{-1/(2-\alpha)}\to\infty$ implies $\log(Nk)=o(x_N^\alpha)$.
    Hence we can choose numbers $\lambda_j>1$ such that for all large $N$,
    \begin{equation}\label{box-priori}
    Nk\max_{1\le j\le k}\PP\Big(\frac{|X_{1,j}|}{t_j}\ge \lambda_j x_N\Big)
    \le e^{-\calI_{\calJ}(\bt)\, x_N^\alpha}.
    \end{equation}
    Let $\lambda:=\max_{1\le j\le k}\lambda_j$ and define the event
    \[
    \calE_{\mathrm{box}}
    :=\Big\{\forall\,i\le N,\ \forall\,j\le k:\ \frac{X_{i,j}}{t_j}\in[-\lambda x_N,\lambda x_N]\Big\}.
    \]
    Then
    \begin{equation}\label{splitwithbox}
    \PP\Big(\frac{1}{x_N}\sum_{i=1}^N \bX_i\ge \bt\Big)
    \le
    \PP\Big(\sum_{i=1}^N \bX_i\ge x_N\bt,\ \calE_{\mathrm{box}}\Big)
    +\PP(\calE_{\mathrm{box}}^{\mathrm c}).
    \end{equation}
    By \eqref{box-priori} and a union bound,
    \begin{equation}\label{eq:eboxc-rate}
    \limsup_{N\to\infty}\frac{1}{x_N^\alpha}\log \PP(\calE_{\mathrm{box}}^{\mathrm c})
    \le -\calI_{\calJ}(\bt).
    \end{equation}
    Hence, it remains to bound the first term in \eqref{splitwithbox}.
    Fix $\delta\in(0,1/4)$ and $\varepsilon_0\in(0,1)$, and set
    \[
    \calE_{\le}:=\Big\{\forall i\le N:\ -\lambda x_N \bt \le \bX_i \le \varepsilon_0 x_N \bt\Big\}.
    \]
    Then
    \begin{align}
    \PP\Big(\sum_{i=1}^N \bX_i\ge x_N\bt,\ \calE_{\mathrm{box}}\Big)
    &\le
    \PP\Big(\sum_{i=1}^N \bX_i\ge x_N\bt,\ \calE_{\le}\Big)
    +\PP\Big(\sum_{i=1}^N \bX_i\ge x_N\bt,\ \calE_{\mathrm{box}}\cap \calE_{\le}^{\mathrm c}\Big).
    \label{eq:split-small-big}
    \end{align}
    By Lemma~\ref{lem:upperbound-small-part},
    \begin{equation}\label{eq:small-part-negl}
    \limsup_{\varepsilon_0\downarrow 0}\ \limsup_{N\to\infty}\frac{1}{x_N^\alpha}
    \log \PP\Big(\sum_{i=1}^N \bX_i\ge x_N\bt,\ \calE_{\le}\Big)=-\infty.
    \end{equation}
    For the second term in \eqref{eq:split-small-big}, on $\calE_{\mathrm{box}}\cap \calE_{\le}^{\mathrm c}$ define
    \[
    \Gamma:=\Big\{i\le N:\ \bX_i\not\le \varepsilon_0 x_N\bt\Big\} \text{ and note } |\Gamma|\ge 1.
    \]
    Fix $\varepsilon_0$ and choose $M=M(\varepsilon_0)\in\NN$ so large that
    \begin{equation}\label{eq:choose-M}
    (M+1)c(\varepsilon_0 \min_{1\le j\le k} t_j)^\alpha \ \ge\ \calI_{\calJ}(\bt)+1,
    \end{equation}
    where $c>0$ is as in Lemma~\ref{lem:finite-second}. For $i \in \Gamma$, we have that there is $j$ such that $X_{i,j}>\varepsilon_0 x_N t_j$, and so we obtain for all large $N$,
    \[
    \PP( |\Gamma|\ge M+1)
    \le \binom{N}{M+1}\Big(k\max_{1\le j\le k}\PP(X_{1,j}>\varepsilon_0 x_N t_j)\Big)^{M+1},
    \]
    hence, using $\log N=o(x_N^\alpha)$, Lemma~\ref{lem:finite-second} and \eqref{eq:choose-M},
    \begin{equation}\label{eq:m-large-negl}
    \limsup_{N\to\infty}\frac{1}{x_N^\alpha}\log \PP( |\Gamma|\ge M+1)\le -\calI_{\calJ}(\bt)-1.
    \end{equation}
    Consequently,
    \begin{align}
    \PP\Big(\sum_{i=1}^N \bX_i\ge x_N\bt,\ \calE_{\mathrm{box}}\cap \calE_{\le}^{\mathrm c}\Big)
    &\le \PP( |\Gamma|\ge M+1)+\sum_{m=1}^{M}\PP\Big(\sum_{i=1}^N \bX_i\ge x_N\bt,\ \calE_{\mathrm{box}},\ |\Gamma|=m\Big).
    \label{eq:split-m}
    \end{align}
    Fix $m\in\{1,\ldots,M\}$ and a set $A\subseteq\{1,\ldots,N\}$ with $|A|=m$.
    On $\{\Gamma=A\}\cap\calE_{\mathrm{box}}$ we have $\forall i\notin A:\ -\lambda x_N\bt\le \bX_i\le \varepsilon_0 x_N\bt$.
    Write
    \[
    \bS_{\mathrm{sm}}:=\sum_{i\notin A}\bX_i,\qquad \bS_{\mathrm{bg}}:=\sum_{i\in A}\bX_i.
    \]
    If $\bS_{\mathrm{sm}}+\bS_{\mathrm{bg}}\ge x_N\bt$ and $(\bS_{\mathrm{sm}})_j<\delta x_N t_j$ for all $j$, then
    \[
    \bS_{\mathrm{bg}}=(\bS_{\mathrm{sm}}+\bS_{\mathrm{bg}})-\bS_{\mathrm{sm}}
    \ge x_N\bt-\delta x_N\bt=(1-\delta)x_N\bt.
    \]
    Hence,
    \begin{equation}\label{eq:split-sm-bg}
    \{\bS_{\mathrm{sm}}+\bS_{\mathrm{bg}}\ge x_N\bt\}
    \subseteq
    \Big(\bigcup_{j=1}^k\{(\bS_{\mathrm{sm}})_j\ge \delta x_N t_j\}\Big)
    \ \cup\
    \{\bS_{\mathrm{bg}}\ge (1-\delta)x_N\bt\}.
    \end{equation}
    Therefore,
    \begin{align}
    &\PP\Big(\sum_{i=1}^N \bX_i\ge x_N\bt,\ \calE_{\mathrm{box}},\ \Gamma=A\Big)\nonumber\\
    &\le
    \sum_{j=1}^k
    \PP\Big(\sum_{i\notin A} X_{i,j}\ge \delta x_N t_j,\ \forall i\notin A:\ -\lambda x_N t_j\le X_{i,j}\le \varepsilon_0 x_N t_j\Big)\nonumber\\
    &\quad+
    \PP\Big(\sum_{i\in A}\bX_i\ge (1-\delta)x_N\bt,\ \forall i\le N:\ -\lambda x_N\bt\le \bX_i\le \lambda x_N\bt\Big).
    \label{eq:two-cases-A}
    \end{align}
    For each fixed $j$, apply Lemma~\ref{lem:upperbound-small-part} to the $j$-th probability above to obtain (using \cite[Lemma 1.2.15]{dembo2009techniques} and $|A^c| = N-m$),
    \begin{equation}\label{eq:small-complement-negl}
    \limsup_{\varepsilon_0\downarrow 0}\ \limsup_{N\to\infty}\frac{1}{x_N^\alpha}
    \log \sum_{j=1}^k
    \PP\Big(\sum_{i\notin A} X_{i,j}\ge \delta x_N t_j,\ \forall i\notin A:\ -\lambda x_N t_j\le X_{i,j}\le \varepsilon_0 x_N t_j\Big)=-\infty.
    \end{equation}
    We now treat the second term in \eqref{eq:two-cases-A}.
    Fix $\rho>0$ such that
    \begin{equation}\label{eq:rho-choice}
    M\rho\,\bfm 1 \ \le\ \delta \bt.
    \end{equation}
    Let
    \[
    \calD_\rho:=\prod_{j=1}^k \{-\lambda t_j+\ell\rho:\ \ell=0,1,\ldots,\lfloor 2\lambda t_j/\rho\rfloor\}\subseteq[-\lambda\bt,\lambda\bt].
    \]
    For $\by\in[-\lambda\bt,\lambda\bt]$ define $\pi_\rho(\by)\in\calD_\rho$ by
    \[
    (\pi_\rho(\by))_j:=-\lambda t_j+\rho\Big\lfloor\frac{y_j+\lambda t_j}{\rho}\Big\rfloor,\qquad j=1,\ldots,k,
    \]
    so that $\pi_\rho(\by)\le \by<\pi_\rho(\by)+\rho\bfm 1$.
    If $\sum_{i\in A}\bX_i\ge (1-\delta)x_N\bt$ and $\forall i\le N:\ -\lambda x_N\bt\le \bX_i\le \lambda x_N\bt$, then with $\by_i:=\bX_i/x_N\in[-\lambda\bt,\lambda\bt]$,
    \[
    \sum_{i\in A}\pi_\rho(\by_i)
    \ge (\sum_{i\in A}\by_i) - m\rho\,\bfm 1
    \ge (1-\delta)\bt - M\rho\,\bfm 1
    \ge (1-2\delta)\bt,
    \]
    where we used \eqref{eq:rho-choice}. Consequently,
    \begin{align}
    &\Big\{\sum_{i\in A}\bX_i\ge (1-\delta)x_N\bt,\ \forall i\le N:\ -\lambda x_N\bt\le \bX_i\le \lambda x_N\bt\Big\}\nonumber\\
    &\subseteq
    \bigcup_{\substack{(\bd^{(i)})_{i\in A}\in\calD_\rho^{\,m}\\ \sum_{i\in A} \bd^{(i)}\ge (1-2\delta)\bt}}
    \ \bigcap_{i\in A}\Big\{\bX_i\ge x_N \bd^{(i)},\ \forall \ell\le N:\ -\lambda x_N\bt\le \bX_\ell\le \lambda x_N\bt\Big\}.
    \label{eq:grid-union}
    \end{align}
    Since $\calD_\rho$ is finite, $\log|\calD_\rho|=O(1)$. Fix $m\in\{1,\ldots,M\}$ and a choice $(\bd^{(i)})_{i\in A}\in\calD_\rho^{\,m}$.
    Dropping the box constraint and using independence, we get
    \[
    \PP\Big(\bigcap_{i\in A}\big\{\bX_i\ge x_N \bd^{(i)},\ \forall \ell\le N:\ -\lambda x_N\bt\le \bX_\ell\le \lambda x_N\bt\big\}\Big)
    \le \prod_{i\in A}\PP\big(\bX_1\ge x_N \bd^{(i)}\big).
    \]
    By Definition~\ref{def:multiSE},
    \[
    \lim_{N\to\infty}\frac{1}{x_N^\alpha}\log \PP\big(\bX_1\ge x_N \bd^{(i)}\big)
    =-\calJ\big((\bd^{(i)})_+\big),
    \]
    whenever $(\bd^{(i)})_+\neq \bfm 0$, while if $(\bd^{(i)})_+=\bfm 0$ we trivially have
    $\limsup_{N\to\infty}x_N^{-\alpha}\log \PP(\bX_1\ge x_N \bd^{(i)})\le 0=-\calJ(\bfm 0)$.
    Thus, for every $\eta\in(0,1)$ and all $N$ large enough,
    \[
    \PP\big(\bX_1\ge x_N \bd^{(i)}\big)
    \le
    \exp\Big(-(1-\eta)\,\calJ\big((\bd^{(i)})_+\big)\,x_N^\alpha\Big),
    \]
    and, therefore,
    \[
    \PP\Big(\bigcap_{i\in A}\big\{\bX_i\ge x_N \bd^{(i)},\ \forall \ell\le N:\ -\lambda x_N\bt\le \bX_\ell\le \lambda x_N\bt\big\}\Big)
    \le
    \exp\Big(-(1-\eta)x_N^\alpha\sum_{i\in A}\calJ\big((\bd^{(i)})_+\big)\Big).
    \]
    Applying this bound in \eqref{eq:grid-union}, taking logarithms, dividing by $x_N^\alpha$, and using
    $\log|\calD_\rho|=O(1)$ and $\log\binom{N}{m}=o(x_N^\alpha)$ yields, after letting $\eta\downarrow 0$,
    \begin{align}
    &\limsup_{N\to\infty}\frac{1}{x_N^\alpha}\log 
    \PP\Big(\sum_{i\in A}\bX_i\ge (1-\delta)x_N\bt,\ \forall i\le N:\ -\lambda x_N\bt\le \bX_i\le \lambda x_N\bt\Big)\nonumber\\
    & \le
    -\inf_{\substack{(\bd^{(i)})_{i\in A}\in\calD_\rho^{\,m}\\ \sum_{i\in A} \bd^{(i)}\ge (1-2\delta)\bt}}
    \ \sum_{i\in A}\calJ\big((\bd^{(i)})_+\big).
    \label{eq:big-part-rate-fixedm}
    \end{align}
    If $\sum_{i\in A} \bd^{(i)}\ge (1-2\delta)\bt$, then $\sum_{i\in A} (\bd^{(i)})_+\ge (1-2\delta)\bt$. Therefore,
    \[
    \inf_{\substack{(\bd^{(i)})_{i\in A}\in\calD_\rho^{\,m}\\ \sum_{i\in A} \bd^{(i)}\ge (1-2\delta)\bt}}
    \ \sum_{i\in A}\calJ\big((\bd^{(i)})_+\big)
    \ \ge\
    \inf\Big\{\sum_{r=1}^m \calJ\big(\bu^{(r)}\big):\ \bu^{(r)}\in\bbR_+^k,\ \sum_{r=1}^m \bu^{(r)}\ge (1-2\delta)\bt\Big\}.
    \]
    Taking the infimum over $m\in\NN$ yields by Lemma \ref{lem:Ij_k_summands},
    \begin{align}\label{eq:big-part-rate}
    &\limsup_{N\to\infty}\frac{1}{x_N^\alpha}\log 
    \sum_{m=1}^{M}\ \sum_{\substack{A\subseteq\{1,\ldots,N\}\\ |A|=m}}
    \PP\Big(\sum_{i\in A}\bX_i\ge (1-\delta)x_N\bt,\ \forall i\le N:\ -\lambda x_N\bt\le \bX_i\le \lambda x_N\bt\Big)\\
    & \le -\calI_{\calJ}\big((1-2\delta)\bt\big).
    \end{align}
    Combining \eqref{eq:split-small-big}, \eqref{eq:small-part-negl}, \eqref{eq:split-m}, \eqref{eq:m-large-negl}, \eqref{eq:two-cases-A}, \eqref{eq:small-complement-negl} and \eqref{eq:big-part-rate}, we obtain
    \begin{equation}\label{eq:upper-delta}
    \limsup_{\varepsilon_0\downarrow 0}\ \limsup_{N\to\infty}\frac{1}{x_N^\alpha}
    \log \PP\Big(\sum_{i=1}^N \bX_i\ge x_N\bt,\ \calE_{\mathrm{box}}\Big)
    \le -\calI_{\calJ}\big((1-2\delta)\bt\big).
    \end{equation}
    Since $\calI_{\calJ}$ is $\alpha$-homogeneous (by $\alpha$-homogeneity of $\calJ$ and Lemma \ref{lem:homogeneity_barJ}), we have
    $\calI_{\calJ}((1-2\delta)\bt)=(1-2\delta)^\alpha \calI_{\calJ}(\bt)$ and thus
    $\calI_{\calJ}((1-2\delta)\bt)\uparrow \calI_{\calJ}(\bt)$ as $\delta\downarrow 0$.
    Letting $\delta\downarrow 0$ in \eqref{eq:upper-delta} and using \eqref{splitwithbox} and \eqref{eq:eboxc-rate} yields
    \[
    \limsup_{N\to\infty}\frac{1}{x_N^\alpha}\log \PP\Big(\frac{1}{x_N}\sum_{i=1}^N \bX_i\ge \bt\Big)
    \le -\calI_{\calJ}(\bt).
    \]
    This completes the proof.
\end{proof}

\begin{proof}[Proof of Theorem \ref{thm:LDP-multi}]
The result follows immediately from Lemma \ref{lem:lower-bound} and Lemma \ref{lem:upper-bound-multi}.
\end{proof}

\subsection{Proofs of the applications}

\begin{proof}[Proof of Lemma~\ref{lem:density-implies-SE}]
    Fix $\bt\in\bbR^k$ with at least one strictly positive component and set
    \[
    m(\bt):=\inf_{\bs\ge \bt_+}\overline{\calJ}(\bs)\in(0,\infty].
    \]
    
    \noindent\emph{Lower bound.}
    Fix $\delta\in(0,1)$. Choose $\bs^\delta\ge \bt_+$ such that $\overline{\calJ}(\bs^\delta)\le m(\bt)+\delta$ and, in addition, $\bs^\delta$ is a continuity point of $\overline{\calJ}$ (this is possible since $\overline{\calJ}$ is lower semicontinuous, hence continuous on a dense set).
    By continuity of $\overline{\calJ}$ at $\bs^\delta$, there exists $\varepsilon=\varepsilon_\delta>0$ such that for the box $B^\delta:= \{\bx\in \RR_+^k: \bs^\delta \le \bx \le \bs^\delta+\varepsilon\}$, we have $\sup_{\bu\in B^\delta}\overline{\calJ}(\bu)\le \overline{\calJ}(\bs^\delta)+\delta$. For $x$ large enough (so that $\| (x\bu)_+\|\ge R$ for all $\bu\in B^\delta$), using the lower density bound and $\alpha$-homogeneity,
    \[
    \PP(\bY\ge x\bt)
    \ \ge\
    \PP(\bY\in xB^\delta)
    \ \ge\
    \int_{xB^\delta} p_1(\bx)\,e^{-\overline{\calJ}(\bx)}\,\dd \bx
    =
    x^k\int_{B^\delta} p_1(x\bu)\,e^{-x^\alpha \overline{\calJ}(\bu)}\,\dd \bu.
    \]
    Hence,
    \begin{align}
    \PP(\bY\ge x\bt)
    &\ge
    x^k\,\vol(B^\delta)\,
    \inf_{\bu\in B^\delta}p_1(x\bu)\,
    \exp\Big(-x^\alpha\sup_{\bu\in B^\delta}\overline{\calJ}(\bu)\Big) \notag\\
    &\ge
    x^k\,\vol(B^\delta)\,
    \inf_{\bu\in B^\delta}p_1(x\bu)\,
    e^{-x^\alpha(\overline{\calJ}(\bs^\delta)+\delta)}. \notag
    \end{align}
    Dividing by $x^\alpha$ and letting $x\to\infty$, the assumptions on $p_1$ imply
    \[
    \liminf_{x\to\infty}\frac{1}{x^\alpha}\log\PP(\bY\ge x\bt)
    \ \ge\
    -(\overline{\calJ}(\bs^\delta)+\delta)
    \ \ge\
    -(m(\bt)+2\delta).
    \]
    Letting $\delta\downarrow0$ yields
    \[
    \liminf_{x\to\infty}\frac{1}{x^\alpha}\log\PP(\bY\ge x\bt)\ \ge\ -m(\bt).
    \]
    
    \noindent\emph{Upper bound.}
    Fix $M>1$ and define the box $K_{M,x}:=\{\bx\in\bbR^k:\forall j,\ |x_j|\le Mx\|\bt\|\}$ and
    $D_{M,x}:=\{\bx\in K_{M,x}:\bx\ge x\bt\}$. Then
    \begin{equation}\label{split-with-Kbox}
    \PP(\bY\ge x\bt)\ \le\ \PP(\bY\in D_{M,x})+\PP(\bY\notin K_{M,x}).
    \end{equation}
    Considering the second term on the r.h.s. of \eqref{split-with-Kbox}, by similar arguments as in the proof of Lemma \ref{lem:finite-second} for the lower bound and the upper density bound for the right tails, there exist $c,C>0$ such that $\PP(\bY\notin K_{M,x})\le C e^{-c M^\alpha x^\alpha}$ for all $x$.
    Choose $M$ so large that $cM^\alpha>m(\bt)$, then
    \[
    \limsup_{x\to\infty}\frac{1}{x^\alpha}\log \PP(\bY\notin K_{M,x})\ \le\ -cM^\alpha\ <\ -m(\bt).
    \]
    For the first term on the r.h.s. of \eqref{split-with-Kbox}, for $x$ large enough the upper density bound applies on $D_{M,x}$, hence
    \[
    \PP(\bY\in D_{M,x})
    =\int_{D_{M,x}} f(\bx)\,\dd \bx
    \ \le\
    \int_{D_{M,x}} p_2(\bx_+)\,e^{-\overline{\calJ}(\bx_+)}\,\dd \bx.
    \]
    On $D_{M,x}$ we have $\bx_+\ge x\bt_+$, hence $\overline{\calJ}(\bx_+)\ge \inf_{\bs\ge x\bt_+}\overline{\calJ}(\bs)=x^\alpha m(\bt)$ by $\alpha$-homogeneity. Therefore,
    \[
    \PP(\bY\in D_{M,x})
    \ \le\
    \vol(D_{M,x})\,
    \sup_{\bx\in D_{M,x}} p_2(\bx_+)\,
    e^{-x^\alpha m(\bt)}.
    \]
    Since $\vol(D_{M,x})\le (2Mx\|\bt\|)^k$ and $(\bx_+/x)_{\bx\in D_{M,x}}$ ranges over a fixed compact subset of $\bbR_+^k$,
    the assumptions on $p_2$ give
    \[
    \limsup_{x\to\infty}\frac{1}{x^\alpha}\log \PP(\bY\in D_{M,x})\ \le\ -m(\bt).
    \]
    Combining the two terms on the r.h.s. of \eqref{split-with-Kbox} yields
    \[
    \limsup_{x\to\infty}\frac{1}{x^\alpha}\log \PP(\bY\ge x\bt)\ \le\ -m(\bt).
    \]
    Together with the lower bound, this proves the claim.
\end{proof}

\begin{proof}[Proof of Lemma \ref{lem:power-gaussian-density}]
    Let \(\bU=(|Y_1|,\ldots,|Y_k|)\). By \cite[Section 3]{chakraborty2013multivariate}, for \(\bu\in\bbR_+^k\),
    \[
    f_{\bU}(\bu)=\sum_{\beps\in\{-1,1\}^k}\frac{1}{(2\pi)^{k/2}(\det\bSigma)^{1/2}}
    \exp\Big(-\frac12\,(\beps\odot \bu)^\top\bSigma^{-1}(\beps\odot \bu)\Big),
    \]
    where $f_{\bU}$ is the density of $\bU$. Set \(\bZ=\Phi(\bU)\) with \(\Phi(\bu)=\bu^{q}\) for $\bu\in \bbR_+^k$. Then \(\Phi^{-1}(\bz)=\bz^{1/q}\) and
    \[
    \det D\Phi^{-1}(\bz)=\prod_{i=1}^k \frac{1}{q}\,z_i^{1/q-1}
    =\frac{1}{q^k\prod_{i=1}^k z_i^{(q-1)/q}}.
    \]
    Hence, \(f_{\bZ}(\bz)=f_{\bU}(\bz^{1/q})\det D\Phi^{-1}(\bz)\), which gives the formula for the density of \(\bZ\). \eqref{powersfit} follows since the prefactor is polynomial and the sum has finitely many terms.
\end{proof}

\begin{proof}[Proof of Theorem \ref{thm:abs-powers-gauss}]
    Let \(\bG_1,\bG_2,\ldots\) be i.i.d.\ \(\mathcal{N}(\bfm0,\bSigma)\) in \(\bbR^k\) and set
    \begin{align*}
    \bW_i &= (|G_{i,1}|^q,\ldots,|G_{i,k}|^q),\quad
    \bmu_q := \big(\bSigma_{11}^{q/2}M_q,\ldots,\bSigma_{kk}^{q/2}M_q\big),\quad
    \bX_i := \bW_i-\bmu_q, \\
    \bY_N &= \frac1N\sum_{i=1}^N \bX_i.
    \end{align*}
    Assume first that \(\bSigma\) is positive definite. By Lemma \ref{lem:power-gaussian-density} and \eqref{eq:density-bounds-J}, \(\bX_1=\bW_1-\bmu_q\) has a Lebesgue density \(f_{\bX_1}\) on \(\bbR^k\) with
    \[
    p_1(\bx_+)\,e^{-\overline{\calJ}(\bx_+)}\le f_{\bX_1}(\bx)\le p_2(\bx_+)\,e^{-\overline{\calJ}(\bx_+)}\qquad (\|\bx_+\|\ \text{large})
    \]
    for some positive polynomials \(p_1,p_2\) and
    \[
    \overline{\calJ}(\bz)=\min_{\beps\in\{\pm1\}^k}\frac12\,(\beps\odot \bz^{1/q})^\top\bSigma^{-1}(\beps\odot \bz^{1/q}),\qquad \bz\in\bbR_+^k.
    \]
    Each marginal \(W_{1,j}\) has $1$-dimensional stretched-exponential tail with index \(\alpha=2/q\in(0,1)\).
    Lemma \ref{lem:density-implies-SE} applied to \(\bX_1\) with this \(\overline{\calJ}\) and \(\alpha\) yields that \(\bX_1\) has stretched-exponential tail of rate \((\calJ,\alpha)\) in the sense of Definition \ref{def:multiSE}, where \(\calJ(\bt)=\inf_{\bs\ge \bt}\overline{\calJ}(\bs)\). We can therefore apply Theorem \ref{thm:LDP-multi} with \(x_N=N\) to the sequence \((\bX_i)_{i\ge1}\). This gives, for every \(\bt\in(0,\infty)^k\),
    \[
    \lim_{N\to\infty}\frac{1}{N^{2/q}}
    \log\PP\Big(\frac1N\sum_{i=1}^N\bX_i\ge \bt\Big)
    = -\calI_{\calJ}(\bt).
    \]
    This proves the theorem in the positive definite case.\\
    
    Now assume that \(\bSigma\) is only positive semidefinite. For \(\delta>0\) set \(\bSigma_\delta:=\bSigma+\delta\,\Id_k\), which is positive definite. Let \(\bG_i^{(\delta)}\sim\mathcal{N}(\bfm0,\bSigma_\delta)\) be i.i.d.\ and define
    \begin{align*}
    \bW_i^{(\delta)} &= (|G_{i,1}^{(\delta)}|^q, \ldots, |G_{i,k}^{(\delta)}|^q),\quad
    \bmu_{q,\delta} := \big((\bSigma_{11}+\delta)^{q/2}M_q,\ldots,(\bSigma_{kk}+\delta)^{q/2}M_q\big), \\
    \bX_i^{(\delta)} &:= \bW_i^{(\delta)}-\bmu_{q,\delta},\qquad
    \bY_N^{(\delta)} := \frac1N\sum_{i=1}^N \bX_i^{(\delta)}.
    \end{align*}
    By the first part applied with \(\bSigma_\delta\), for every \(\bt\in(0,\infty)^k\),
    \begin{equation}\label{eq:LDP-Sigma-delta}
    \lim_{N\to\infty}\frac{1}{N^{2/q}}
    \log\PP\big(\bY_N^{(\delta)}\ge \bt\big)
    = -\calI_{\calJ_\delta}(\bt),
    \end{equation}
    where
    \[
    \overline{\calJ_\delta}(\bz)
    :=\min_{\beps\in\{\pm1\}^k}\frac12\,(\beps\odot \bz^{1/q})^\top\bSigma_\delta^{-1}(\beps\odot \bz^{1/q}),
    \qquad \bz\in\bbR_+^k,
    \]
    \[
    \calJ_\delta(\bt):=\inf_{\bs\ge \bt}\overline{\calJ_\delta}(\bs),\qquad \bt\in\bbR_+^k,
    \]
    and
    \[
    \calI_{\calJ_\delta}(\bt)
    :=\min\Big\{\sum_{r=1}^k \calJ_\delta\big(\bt^{(r)}\big):
    \bt^{(r)}\in\bbR_+^k,\ \sum_{r=1}^k \bt^{(r)}=\bt\Big\}.
    \]
    For each \(\bx\in\bbR^k\) we have
    \[
    \bx^\top\bSigma_\delta^{-1}\bx\ \uparrow\ \bx^\top\bSigma^+\bx
    \qquad(\delta\downarrow0),
    \]
    where we adopt the convention that \(\bx^\top\bSigma^+\bx=+\infty\) if \(\bx\notin\operatorname{range}(\bSigma)\).
    Hence, for every \(\bz\in\bbR_+^k\),
    \[
    \overline{\calJ_\delta}(\bz)\uparrow
    \overline{\calJ}(\bz)
    :=\min_{\beps\in\{\pm1\}^k}\frac12\,(\beps\odot \bz^{1/q})^\top\bSigma^+(\beps\odot \bz^{1/q}).
    \]
    Consequently, for every \(\bt\in\bbR_+^k\),
    \[
    \calJ_\delta(\bt)\uparrow \calJ(\bt) =\inf_{\bs\ge \bt}\overline{\calJ}(\bs),
    \]
    and, for every \(\bt\in\bbR_+^k\), by monotone convergence,
    \[
    \calI_{\calJ_\delta}(\bt)\uparrow \calI_{\calJ}(\bt)
    =\min\Big\{\sum_{r=1}^k \calJ\big(\bt^{(r)}\big):
    \bt^{(r)}\in\bbR_+^k,\ \sum_{r=1}^k \bt^{(r)}=\bt\Big\}.
    \]
To transfer the asymptotics from \(\bY_N^{(\delta)}\) to \(\bY_N\), we couple the sequences on one probability space. Let \(\bH_i\sim\mathcal{N}(\bfm0,\bSigma)\) and \(\bZ_i\sim\mathcal{N}(\bfm0,\Id_k)\) be independent i.i.d.\ for each \(i\). Set
\[
\bG_i:=\bH_i\qquad\text{and}\qquad\bG_i^{(\delta)}:=\bH_i+\sqrt{\delta}\,\bZ_i,
\]
so that \(\bG_i\sim\mathcal{N}(\bfm0,\bSigma)\) and \(\bG_i^{(\delta)}\sim\mathcal{N}(\bfm0,\bSigma_\delta)\). Define \(\bX_i,\bX_i^{(\delta)},\bY_N,\bY_N^{(\delta)}\) in terms of \(\bG_i,\bG_i^{(\delta)}\) as above. We show that for every \(\eps>0\),
\begin{equation}\label{eq:exp-eq}
\lim_{\delta\downarrow0}\ \limsup_{N\to\infty}
\frac{1}{N^{2/q}}\log \PP\big(\|\bY_N^{(\delta)}-\bY_N\|_\infty>\eps\big)=-\infty.
\end{equation}
Fix \(\delta>0\) and \(j\in\{1,\ldots,k\}\). By the mean value theorem applied to \(x \mapsto |x|^q\), there exists a random variable \(\theta\) with values between \(H_{1,j}\) and \(H_{1,j}+\sqrt{\delta}Z_{1,j}\) such that
\[
\big| |H_{1,j}+\sqrt{\delta}Z_{1,j}|^q - |H_{1,j}|^q \big|
= q|\theta|^{q-1}\sqrt{\delta}|Z_{1,j}|.
\]
Hence,
\[
|X_{1,j}^{(\delta)}-X_{1,j}|
\le q|\theta|^{q-1}\sqrt{\delta}|Z_{1,j}|
+\big|(\bSigma_{jj}+\delta)^{q/2}-\bSigma_{jj}^{q/2}\big|\,M_q.
\]
Using \(|\theta| \le |H_{1,j}| + \sqrt{\delta}|Z_{1,j}|\) and convexity of \(x \mapsto x^{q-1}\) (since $q>2$), we obtain
\[
|X_{1,j}^{(\delta)}-X_{1,j}|
\le C\sqrt{\delta}\Big(|H_{1,j}|^{q-1}|Z_{1,j}|+\delta^{(q-1)/2}|Z_{1,j}|^q\Big)
+\big|(\bSigma_{jj}+\delta)^{q/2}-\bSigma_{jj}^{q/2}\big|\,M_q.
\]
Since \(\big|(\bSigma_{jj}+\delta)^{q/2}-\bSigma_{jj}^{q/2}\big|\,M_q\to 0\) as \(\delta\downarrow0\), this deterministic shift can be absorbed into the estimates below by adjusting constants. The probability \(\PP(|X_{1,j}^{(\delta)}-X_{1,j}| > t)\) is bounded by the sum of probabilities that each term on the right-hand side exceeds \(t/2\).
For the first term, \(C\sqrt{\delta}\,|H_{1,j}|^{q-1}|Z_{1,j}| > t/2\) implies \(\max\{|H_{1,j}|, |Z_{1,j}|\} > c_1 \delta^{-1/(2q)}t^{1/q}\). Using the standard Gaussian tail estimate \(\PP(|Z|>u) \le e^{-u^2/2}\) for $u$ large enough, this probability is bounded by \(C_2 \exp(-c_3 \delta^{-1/q} t^{2/q})\).
The second term is controlled similarly by \(\PP(|Z_{1,j}| > c_4 \delta^{-1/2} t^{1/q})\).
Combining these,
\[
\PP\big(|X_{1,j}^{(\delta)}-X_{1,j}|>t\big)
\le C'\exp\big(-\gamma(\delta)\,t^{2/q}\big),
\]
where \(\gamma(\delta) \to \infty\) as \(\delta\downarrow 0\).
Applying the $1$-dimensional stretched-exponential bounds (e.g., \cite{nagaev1969integral}) to the centered sums of these differences yields
\[
\limsup_{N\to\infty}\frac{1}{N^{2/q}} \log \PP\Big(\Big|\frac1N\sum_{i=1}^N (X_{i,j}^{(\delta)}-X_{i,j})\Big|>\eps\Big)
\le - c(\eps)\gamma(\delta).
\]
Taking a union bound over \(j\) and letting \(\delta\downarrow0\) confirms \eqref{eq:exp-eq}. Fix \(\bt\in(0,\infty)^k\) and \(\varepsilon\in(0,\min_j t_j)\). For every \(\delta>0\) and \(N\),
\[
\PP\big(\bY_N\ge\bt\big)
\le \PP\big(\bY_N^{(\delta)}\ge \bt-\varepsilon\bfm1\big)
+ \PP\big(\|\bY_N^{(\delta)}-\bY_N\|_\infty>\varepsilon\big).
\]
Apply \cite[Lemma 1.2.15]{dembo2009techniques} to get
\[
\limsup_{N\to\infty}\frac{1}{N^{2/q}}\log\PP\big(\bY_N\ge\bt\big)
\le \max\Big\{
-\calI_{\calJ_\delta}(\bt-\varepsilon\bfm1),\ 
\limsup_{N\to\infty}\frac{1}{N^{2/q}}\log\PP\big(\|\bY_N^{(\delta)}-\bY_N\|_\infty>\varepsilon\big)
\Big\}.
\]
By the exponential bound for \(\|\bY_N^{(\delta)}-\bY_N\|_\infty\) and \eqref{eq:exp-eq}, for every fixed \(\varepsilon>0\), we can choose \(\delta\) small enough such that the decay rate of the difference term exceeds \(\calI_{\calJ_\delta}(\bt-\varepsilon\bfm1)\). Consequently,
\[
\limsup_{N\to\infty}\frac{1}{N^{2/q}}\log\PP\big(\bY_N\ge\bt\big)
\le -\calI_{\calJ_\delta}(\bt-\varepsilon\bfm1).
\]
Letting \(\delta\downarrow0\) and then \(\varepsilon\downarrow0\) (using lower semicontinuity of \(\calI_{\calJ}\)) yields
\[
\limsup_{N\to\infty}\frac{1}{N^{2/q}}\log\PP\big(\bY_N\ge\bt\big)
\le -\calI_{\calJ}(\bt).
\]
For the lower bound, fix again \(\bt\in(0,\infty)^k\) and \(\eta\in(0,1)\), and set \(\varepsilon:=\eta\,\min_j t_j\) so that \((1+\eta)\bt-\varepsilon\bfm1\ge \bt\). For every \(\delta>0\) and \(N\),
\[
\PP\big(\bY_N\ge\bt\big) \ge \PP\big(\bY_N^{(\delta)}\ge (1+\eta)\bt,\ \|\bY_N^{(\delta)}-\bY_N\|_\infty\le\varepsilon\big) \ge \PP\big(\bY_N^{(\delta)}\ge (1+\eta)\bt\big)
- \PP\big(\|\bY_N^{(\delta)}-\bY_N\|_\infty>\varepsilon\big).
\]
By \eqref{eq:LDP-Sigma-delta},
\[
\lim_{N\to\infty}\frac{1}{N^{2/q}}\log\PP\big(\bY_N^{(\delta)}\ge (1+\eta)\bt\big)
= -\calI_{\calJ_\delta}\big((1+\eta)\bt\big).
\]
By \eqref{eq:exp-eq}, for sufficiently small \(\delta\), the first term dominates:
\[
\liminf_{N\to\infty}\frac{1}{N^{2/q}}\log\PP\big(\bY_N\ge\bt\big)
\ge -\calI_{\calJ_\delta}\big((1+\eta)\bt\big).
\]
Letting \(\delta\downarrow0\) yields
\[
\liminf_{N\to\infty}\frac{1}{N^{2/q}}\log\PP\big(\bY_N\ge\bt\big)
\ge -\calI_{\calJ}\big((1+\eta)\bt\big),
\]
and finally letting \(\eta\downarrow0\) and using the \(\alpha\)-homogeneity of \(\calI_{\calJ}\) gives
\[
\liminf_{N\to\infty}\frac{1}{N^{2/q}}\log\PP\big(\bY_N\ge\bt\big)
\ge -\calI_{\calJ}(\bt).
\]
Combining the upper and lower bounds completes the proof.
\end{proof}

\begin{proof}[Proof of Corollary \ref{cor:rho-threshold}]
    This follows directly, as a special case, from Theorem \ref{thm:abs-powers-gauss}.
\end{proof}

\begin{lemma}\label{lem:I_not_convex_nor_concave_q_gt_2_rho_pos}
    Let $q>2$ and $\rho\in(0,1)$. Define
    \[
    \overline{\calJ}(z_1,z_2)
    :=\frac{z_1^{2/q}+z_2^{2/q}-2\rho\,z_1^{1/q}z_2^{1/q}}{2(1-\rho^2)},
    \qquad (z_1,z_2)\in\bbR_+^2,
    \]
    \[
    \calJ(\bt):=\inf_{\bs\ge \bt}\overline{\calJ}(\bs),\qquad \bt\in\bbR_+^2,
    \]
    and
    \[
    \calI_{\calJ}(\bt)
    :=\min\Big\{\calJ(\bu)+\calJ(\bv):\ \bu,\bv\in\bbR_+^2,\ \bu+\bv=\bt\Big\},
    \qquad \bt\in\bbR_+^2.
    \]
    Then $\calI_{\calJ}$ is neither convex nor concave on $\bbR_+^2$.
\end{lemma}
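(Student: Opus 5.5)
The plan is to handle the two claims separately, using only the explicit form of $\overline{\calJ}$ and the $\alpha$-homogeneity of $\calI_\calJ$ with $\alpha=2/q\in(0,1)$. This homogeneity is inherited from that of $\calJ$: $\calJ$ is the infimum over $\bs\ge\bt$ of the $\alpha$-homogeneous function $\overline{\calJ}$, and $\calI_\calJ$ is a minimum of sums of $\calJ$ over decompositions, which scale linearly.

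\emph{Not convex.} First I check that $\calI_\calJ(\bt)>0$ for $\bt\neq\bfm 0$.
\begin{itemize}
\item Since $\rho\in(0,1)$, the numerator of $\overline{\calJ}$ is a positive definite quadratic form in $(z_1^{1/q},z_2^{1/q})$. Hence $\overline{\calJ}(\bz)\ge c\,(z_1^{2/q}+z_2^{2/q})$ for some $c>0$.
\item Therefore $\calJ(\bu)\ge c\max_j u_j^{2/q}$, which is positive for $\bu\neq\bfm 0$.
\item In any decomposition $\bt=\bu+\bv$ with $\bt\neq\bfm 0$, at least one of $\bu,\bv$ is nonzero, so $\calI_\calJ(\bt)>0$.
\end{itemize}
Now fix such a $\bt$ and consider $\lambda\mapsto \calI_\calJ(\lambda\bt)=\lambda^\alpha\calI_\calJ(\bt)$ on $[0,1]$. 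If $\calI_\calJ$ were convex, then since $\calI_\calJ(\bfm 0)=0$ we would have $\calI_\calJ(\lambda\bt)\le\lambda\,\calI_\calJ(\bt)$. This gives $\lambda^\alpha\le\lambda$ for $\lambda\in(0,1)$, which is false because $\alpha<1$.

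\emph{Not concave.} The idea is that near the coordinate axes, $\calJ$ ignores a small second coordinate entirely.
\begin{itemize}
\item I compute $\calJ(t_1,t_2)$ in the regime $t_1\le \rho^q t_2$. Fix $s_2=t_2$ and minimize $\overline{\calJ}(s_1,t_2)$ over $s_1\ge 0$. With $w=s_1^{1/q}$ and $a=t_2^{1/q}$, the numerator $w^2-2\rho a w+a^2$ is minimized at $w=\rho a$, where it equals $(1-\rho^2)a^2$. The minimizer $s_1=\rho^q t_2$ is admissible because it is $\ge t_1$.
\item This gives $\calJ(t_1,t_2)\le t_2^{\alpha}/2$. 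The matching lower bound holds by the same one-variable minimization together with monotonicity in $s_2\ge t_2$, so in fact $\calJ(t_1,t_2)=t_2^{\alpha}/2$.
\item In particular $\calJ(1,0)=\calJ(0,1)=1/2$.
\item For $\calI_\calJ$ on the axis: decompositions of $(1,0)$ only split the first coordinate, and $u^\alpha+(1-u)^\alpha\ge 1$ since $\alpha<1$. Hence $\calI_\calJ(1,0)=1/2$, and by symmetry $\calI_\calJ(0,1)=1/2$.
\end{itemize}

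Now let $g(\lambda):=\calI_\calJ(\lambda,1-\lambda)$. For $0<\lambda\le \rho^q(1-\lambda)$, taking the trivial decomposition $\bu=\bt$, $\bv=\bfm 0$ gives
\[
g(\lambda)\le\calJ(\lambda,1-\lambda)=\tfrac12(1-\lambda)^\alpha<\tfrac12 .
\]
A concave function on $[0,1]$ lies above the chord through its endpoints. Here $g(0)=g(1)=1/2$, so concavity would force $g\ge 1/2$ on $[0,1]$, which contradicts the display. Concavity of $\calI_\calJ$ on $\bbR_+^2$ would imply concavity of $g$, so $\calI_\calJ$ is not concave.

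The main obstacle is the explicit computation of $\calJ$ near the axes, specifically verifying the regime $t_1\le\rho^q t_2$. Everything else is soft, using only homogeneity, the positivity check, and the chord property of concave functions.
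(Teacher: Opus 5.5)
Your proof is correct and follows essentially the same route as the paper. Both arguments compute $\mathcal{J}$ near the axes by minimizing the quadratic form in $(z_1^{1/q},z_2^{1/q})$ at $z_1^{1/q}=\rho\, z_2^{1/q}$, obtain $\mathcal{I}_{\mathcal{J}}(1,0)=\mathcal{I}_{\mathcal{J}}(0,1)=1/2$ from subadditivity of $u\mapsto u^{2/q}$, and refute concavity with the chord from $(1,0)$ to $(0,1)$ at a point $(\varepsilon,1-\varepsilon)$ with $\varepsilon\le \rho^q/(1+\rho^q)$. Non-convexity is, in both, the sub-linear $\tfrac{2}{q}$-homogeneity along a ray from the origin. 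The only difference is cosmetic: the paper evaluates directly at $(1/2,0)$ using $\mathcal{I}_{\mathcal{J}}(t,0)=t^{2/q}/2$, so your separate positivity check in an arbitrary direction becomes redundant once you have $\mathcal{I}_{\mathcal{J}}(1,0)=1/2$.
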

    
\begin{proof}
    Fix $t\ge 0$ and let $s_1\ge t$, $s_2\ge 0$. Writing $a:=s_1^{1/q}$ and $b:=s_2^{1/q}$,
    \[
    a^2+b^2-2\rho ab=(b-\rho a)^2+(1-\rho^2)a^2\ge (1-\rho^2)a^2,
    \]
    and so $\overline{\calJ}(s_1,s_2)\ge a^2/2=s_1^{2/q}/2$. Therefore,
    \[
    \calJ(t,0)=\inf_{\substack{s_1\ge t\\ s_2\ge 0}}\overline{\calJ}(s_1,s_2)\ge \frac{t^{2/q}}{2}.
    \]
    Choosing $(s_1,s_2)=(t,\rho^q t)$ gives $b=\rho a$ and thus $\overline{\calJ}(t,\rho^q t)=t^{2/q}/2$, so
    \[
    \calJ(t,0)=\frac{t^{2/q}}{2},\qquad \calJ(0,t)=\frac{t^{2/q}}{2}.
    \]
    Set $p:=2/q\in(0,1)$. For $t\ge 0$,
    \[
    \calI_{\calJ}(t,0)=\min_{u\in[0,t]}\Big(\calJ(u,0)+\calJ(t-u,0)\Big)
    =\frac12\min_{u\in[0,t]}\Big(u^p+(t-u)^p\Big)=\frac{t^p}{2},
    \]
    since $(x+y)^p\le x^p+y^p$ for all $x,y\ge 0$, with equality at $u\in\{0,t\}$.
    
    \emph{Not convex.} Using $\calI_{\calJ}(0,0)=0$ and $\calI_{\calJ}(1,0)=1/2$,
    \[
    \calI_{\calJ}\Big(\frac12,0\Big)=\frac12\Big(\frac12\Big)^p=2^{-(1+p)}>\frac14
    =\frac12\,\calI_{\calJ}(1,0)+\frac12\,\calI_{\calJ}(0,0),
    \]
    because $p<1$. Hence, $\calI_{\calJ}$ is not convex.
    
    \emph{Not concave.} Let $\varepsilon:=\rho^q/(1+\rho^q)\in(0,1)$, so that $\varepsilon^{1/q}=\rho(1-\varepsilon)^{1/q}$. Then
    \[
    \calI_{\calJ}(\varepsilon,1-\varepsilon)\le \calJ(\varepsilon,1-\varepsilon)\le \overline{\calJ}(\varepsilon,1-\varepsilon)
    =\frac{(1-\varepsilon)^p}{2}<\frac12.
    \]
    But $\varepsilon(1,0)+(1-\varepsilon)(0,1)=(\varepsilon,1-\varepsilon)$ and
    $\calI_{\calJ}(1,0)=\calI_{\calJ}(0,1)=1/2$, hence concavity would imply
    \[
    \calI_{\calJ}(\varepsilon,1-\varepsilon)\ge \varepsilon \calI_{\calJ}(1,0)+(1-\varepsilon)\calI_{\calJ}(0,1)=\frac12,
    \]
    a contradiction. Thus $\calI_{\calJ}$ is not concave.
\end{proof}

One can show that in the multivariate Weibull example, a deviation is typically realized by exactly one vector. The next lemma gives an example where a deviation is realized by two vectors.

\begin{lemma}[A deviation induced by two big jumps]\label{lem:two-big-jumps}
    Let $R$ be Weibull with parameter $1/2$, i.e.\ $\PP(R\ge t)=\exp(-t^{1/2})$ for $t\ge0$.
    Fix $\eps\in(0,1)$. Let $\bV\in\{(1,\eps),(\eps,1)\}$ with
    $\PP(\bV=(1,\eps))=\PP(\bV=(\eps,1))=\frac12$.
    Let $S_1,S_2$ be i.i.d.\ Rademacher variables, independent of $(R,\bV)$, and set
    \[
    \bX=(X_1,X_2):=(R V_1 S_1,\ R V_2 S_2).
    \]
    Then $\bX$ satisfies \Cref{def:multiSE} with $\alpha=\frac12$ and Assumption \ref{assu-LT} with
    \[
    \calJ(t_1,t_2)
    =\min\Big\{(\max\{t_1,t_2/\eps\})^{1/2},\ (\max\{t_1/\eps,t_2\})^{1/2}\Big\},
    \qquad (t_1,t_2)\in\bbR_+^2.
    \]
    In particular, $\calJ(1,1)=\eps^{-1/2}$.
    Let $(\bX_i)_{i\ge1}$ be i.i.d.\ copies of $\bX$ and take $x_N=N$. Then
    \[
    \liminf_{N\to\infty}\frac{1}{N^{1/2}}\log \PP\left(\frac{1}{N}\sum_{i=1}^N \bX_i\ge (1,1)\right)\ge -2.
    \]
    Consequently, whenever $\eps<1/4$, the lower bound $-2$ is strictly larger than $-\calJ(1,1)=-\eps^{-1/2}$, so $\calJ$ is not the correct rate for this deviation.
    \end{lemma}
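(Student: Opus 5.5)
We verify the three claims of the lemma in turn: the tail condition, the value $\calJ(1,1)$, and the lower bound on the sum. The last is the only one needing an argument beyond direct computation.

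\emph{Step 1: Definition~\ref{def:multiSE}.} Fix $\bt\in\bbR^2$ with a positive component. If $\bt$ has a nonpositive entry, say $t_2\le 0<t_1$, write $\PP(\bX\ge x\bt)\ge\PP(X_1\ge xt_1,\,S_2=1)$. Conversely, $\PP(\bX\ge x\bt)\le \PP(X_1\ge xt_1)$. So it suffices to compute, for $\bt\in\bbR_+^2\setminus\{\bfm0\}$,
\[
\PP(\bX\ge x\bt)=\tfrac18\,\PP\big(R\ge x\max\{t_1,t_2/\eps\}\big)+\tfrac18\,\PP\big(R\ge x\max\{t_1/\eps,t_2\}\big),
\]
where the factor $\tfrac14$ is the probability of both signs being $+1$ (times $\tfrac12$ for each choice of $\bV$). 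When a component $t_j=0$ the sign $S_j$ is irrelevant, which only changes the constants. Since $\log$ of a sum of two exponentials behaves like the larger exponent, we get
\[
\frac1{\sqrt x}\log\PP(\bX\ge x\bt)\to -\min\big\{\max\{t_1,t_2/\eps\}^{1/2},\ \max\{t_1/\eps,t_2\}^{1/2}\big\}=-\calJ(\bt).
\]
This $\calJ$ is continuous and positive off the origin. The vector $\bX$ is centered by the symmetry of $S_1,S_2$. Assumption~\ref{assu-LT} holds because $|X_j|\le R$, so $\PP(X_j\le -t)\le e^{-t^{1/2}}$. Evaluating at $\bt=(1,1)$ gives $\calJ(1,1)=\eps^{-1/2}$.

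\emph{Step 2: lower bound on the sum.} Apply Lemma~\ref{lem:lower-bound} with $x_N=N$. This is admissible because $N\cdot N^{-1/(2-1/2)}=N^{1/3}\to\infty$. The lemma gives a liminf at least $-\calI_\calJ(1,1)$. We bound $\calI_\calJ(1,1)$ by the split $(1,1)=(1,0)+(0,1)$:
\[
\calI_\calJ(1,1)\le \calJ(1,0)+\calJ(0,1)=\min\{1,\eps^{-1/2}\}+\min\{\eps^{-1/2},1\}=2 .
\]
This yields the bound $-2$.

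If one prefers to avoid Lemma~\ref{lem:lower-bound}, the bound can also be obtained directly. Take the events $\{\bX_1\ge N(1+\delta,-\delta)\}$, $\{\bX_2\ge N(-\delta,1+\delta)\}$, and $\{\sum_{i\ge3}\bX_i\ge -\delta N\bfm1\}$, which together force the sum above $(1,1)$. The last event has probability tending to $1$ by Chebyshev's inequality, using finite second moments. The first two each have log-probability $\sim -(1+\delta)^{1/2}N^{1/2}$, obtained from $\bV=(1,\eps)$ (respectively $(\eps,1)$) with the matching signs. Letting $\delta\downarrow0$ gives $-2$.

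\emph{Step 3: comparison.} If $\eps<1/4$ then $\eps^{-1/2}>2$, so $-2>-\calJ(1,1)$, which proves the final claim.

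The only delicate point is Step 1. One must justify the case of mixed-sign $\bt$ and the sign bookkeeping, checking that the sign factors contribute only constants, hence nothing at scale $x^{1/2}$. This is routine.
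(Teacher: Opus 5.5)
Your argument is correct, but your main route to the lower bound differs from the paper's.

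The paper never invokes Theorem~\ref{thm:LDP-multi} here. It intersects three independent events:
\[
\{S_{1,1}=S_{1,2}=1,\ \bV_1=(1,\eps),\ R_1\ge N(1+\delta)\},\quad \{S_{2,1}=S_{2,2}=1,\ \bV_2=(\eps,1),\ R_2\ge N(1+\delta)\},\quad \{\textstyle\sum_{i\ge 3}\bX_i\ge -\delta N\bfm 1\}.
\]
It computes the first two exactly as $\frac18 e^{-(1+\delta)^{1/2}N^{1/2}}$ and uses the law of large numbers for the third. You instead check the hypotheses of Theorem~\ref{thm:LDP-multi} (Definition~\ref{def:multiSE}, Assumption~\ref{assu-LT}, and $N\cdot N^{-2/3}\to\infty$). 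You then plug the split $(1,1)=(1,0)+(0,1)$ into Lemma~\ref{lem:lower-bound}.

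What each route buys:
\begin{itemize}
\item Your route is shorter given the general machinery, and it identifies the bound as $-\calI_\calJ(1,1)$ with $\calI_\calJ(1,1)\le 2<\calJ(1,1)$. It even gives more for free: the split $(a,\eps a)+(\eps a,a)$ with $a=(1+\eps)^{-1}$ shows $\calI_\calJ(1,1)\le 2(1+\eps)^{-1/2}$.
\item The paper's route is self-contained and elementary. It needs only $\PP(R\ge t)=e^{-t^{1/2}}$ and the law of large numbers, not the tail verification or the theorem. It also exhibits the two-jump mechanism explicitly, so the example is an independent illustration of why $\calI_\calJ$ rather than $\calJ$ is the right rate.
\end{itemize}

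Your Step~1 is a bit more careful than the paper's, which computes the orthant probability only for positive $(t_1,t_2)$ and does not treat mixed-sign $\bt$. One small inaccuracy there: when $t_j=0$ the sign $S_j$ is not irrelevant. Since $R>0$ a.s., $\{X_j\ge 0\}=\{S_j=1\}$, so your $\frac18$-formula simply holds exactly and nothing changes.

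There is an arithmetic slip in your optional direct argument. On
\[
\{\bX_1\ge N(1+\delta,-\delta)\}\cap\{\bX_2\ge N(-\delta,1+\delta)\}\cap\{\textstyle\sum_{i\ge 3}\bX_i\ge -\delta N\bfm 1\},
\]
the first coordinate of the sum is only guaranteed to be at least $N(1+\delta)-\delta N-\delta N=N(1-\delta)$, not $N$. You can fix this in either of two ways:
\begin{itemize}
\item raise the margin to $1+2\delta$; or
\item as the paper does, also require the sign of the cross coordinate to be positive, so that it equals $\eps R_i\ge 0$ and needs no $-\delta$ allowance.
\end{itemize}
This does not affect your main argument through Lemma~\ref{lem:lower-bound}.
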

    
    \begin{proof}
    For $j=1,2$ and $t$ large,
    \[
    \PP(X_j\ge t)
    =\frac14\,\PP(R\ge t)+\frac14\,\PP(R\ge t/\eps)
    \le \exp(-c\,t^{1/2})
    \]
    for some $c=c(\eps)>0$. Moreover,
    \begin{align*}
    \PP(X_1\ge t_1,\ X_2\ge t_2)
    &=\frac18\,\PP\left(R\ge \max\{t_1,t_2/\eps\}\right)
     +\frac18\,\PP\left(R\ge \max\{t_1/\eps,t_2\}\right)\\
    &=\frac18\,\exp\Big(-(\max\{t_1,t_2/\eps\})^{1/2}\Big)
      +\frac18\,\exp\Big(-(\max\{t_1/\eps,t_2\})^{1/2}\Big).
    \end{align*}
    Hence, $\bX$ satisfies \Cref{def:multiSE} with $\alpha=\frac12$ and
    \[
    \calJ(t_1,t_2)
    =\min\Big\{(\max\{t_1,t_2/\eps\})^{1/2},\ (\max\{t_1/\eps,t_2\})^{1/2}\Big\},
    \]
    so $\calJ(1,1)=\eps^{-1/2}$. Assumption \ref{assu-LT} holds by symmetry (thus \Cref{LT} holds). Now let $\big((R_i,\bV_i,S_{i,1},S_{i,2})\big)_{i\ge1}$ be i.i.d.\ copies of $(R,\bV,S_1,S_2)$ and set
\[
\bX_i=(X_{i,1},X_{i,2})
:=\big(R_i(\bV_i)_1S_{i,1},\ R_i(\bV_i)_2S_{i,2}\big),\qquad i\ge1.
\]
Fix $\delta\in(0,1)$. Then
\begin{align*}
&\PP\left(\frac{1}{N}\sum_{i=1}^N X_{i,1}\ge1,\ \frac{1}{N}\sum_{i=1}^N X_{i,2}\ge1\right)\\
&\ge
\PP\left(\frac{1}{N}\sum_{i=3}^N X_{i,1}\ge-\delta,\ \frac{1}{N}\sum_{i=3}^N X_{i,2}\ge-\delta\right)
\times
\PP\left(S_{1,1}=1,\ S_{1,2}=1,\ \bV_1=(1,\eps),\ R_1\ge N(1+\delta)\right)\\
&\qquad\times
\PP\left(S_{2,1}=1,\ S_{2,2}=1,\ \bV_2=(\eps,1),\ R_2\ge N(1+\delta)\right).
\end{align*}
    The first factor tends to $1$ by the law of large numbers. Each of the last two factors equals
    \[
    \frac18\,\PP\big(R\ge N(1+\delta)\big)
    =\frac18\,\exp\big(-(1+\delta)^{1/2}N^{1/2}\big).
    \]
    Therefore,
    \[
    \liminf_{N\to\infty}\frac{1}{N^{1/2}}\log \PP\left(\frac{1}{N}\sum_{i=1}^N \bX_i\ge (1,1)\right)
    \ge -2(1+\delta)^{1/2}.
    \]
    Letting $\delta\downarrow0$ yields the asserted bound $-2$. If $\eps<1/4$, then $\eps^{-1/2}>2$, hence $-2>-\eps^{-1/2}=-\calJ(1,1)$, showing that $\calJ$ alone cannot be the correct rate for this event; the correct rate is given by $\calI_{\calJ}$ as in \Cref{thm:LDP-multi}.
\end{proof}    

\begin{proof}[Proof of Theorem \ref{thm: LDP for finite stiefel evals}]
    Let $\mathbf{g}_1,\mathbf{g}_2,\ldots$ be i.i.d.\ standard Gaussian vectors in $\RR^m$ and let $\mathbf{G}_N$ be the $m\times N$ matrix with columns $\mathbf{g}_1,\ldots,\mathbf{g}_N$. It is well known (see for instance \cite[Lemma 3.1]{KabluchkoProchnoThale2019}) that
    \[
    \mathbf{V}_N\ \overset{d}{=}\ \Big(\frac{1}{N}\mathbf{G}_N\mathbf{G}_N^\top\Big)^{-\frac{1}{2}}\,\mathbf{G}_N,
    \]
    hence $\sqrt{N}\,\bv_i\overset{d}{=}\mathbf{A}_N \mathbf{g}_i$ with
    \[
    \mathbf{A}_N:=\Big(\frac{1}{N}\mathbf{G}_N\mathbf{G}_N^\top\Big)^{-\frac12}.
    \]
    Hence, for $\eps\in(0,1)$,
    \begin{align*}
    &\PP\Big(\frac{1}{N}\sum_{i=1}^N |\langle  \sqrt{N}\,\bv_i, \bu_1\rangle |^q - M_q  \ge t_1, \ldots,\ \frac{1}{N}\sum_{i=1}^N |\langle  \sqrt{N}\,\bv_i, \bu_k \rangle|^q - M_q \ge t_k\Big)\\
    &\le \sup_{\mathbf{D}\in B_\eps(\Id_m)}\ \PP\Big(\frac{1}{N}\sum_{i=1}^N |\langle \mathbf{D}\mathbf{g}_i, \bu_1\rangle |^q - M_q  \ge t_1, \ldots,\ \frac{1}{N}\sum_{i=1}^N |\langle \mathbf{D}\mathbf{g}_i, \bu_k \rangle|^q - M_q \ge t_k\Big)\\
    &\qquad\qquad +\ \PP\big(\mathbf{A}_N\notin B_\eps(\Id_m)\big).
    \end{align*}
    By Cram\'er's theorem (Proposition \ref{prop:cramer}) for $\frac1N \mathbf{G}_N\mathbf{G}_N^\top$ (at speed $N$) and contraction under $\bA\mapsto \bA^{-1/2}$ on positive definite matrices,
    \begin{equation}\label{eq:stiefel-gram-fast}
    \lim_{N\to\infty}\frac{1}{N^{2/q}}\log \PP\big(\mathbf{A}_N\notin B_\eps(\Id_m)\big)=-\infty.
    \end{equation}
    
    \noindent \emph{Upper bound.}
    Fix $\mathbf{D}\in B_\eps(\Id_m)$. For each $j \in \{1,\ldots,k\}$ set
    \[
    \sigma_j(\mathbf{D}):=\|\mathbf{D}^\top \bu_j\|,\qquad \widehat\bu_j(\mathbf{D}):=\frac{\mathbf{D}^\top \bu_j}{\|\mathbf{D}^\top \bu_j\|}\in\bbS^{m-1}.
    \]
    Then $\langle \mathbf{D}\mathbf{g}_i,\bu_j\rangle=\sigma_j(\mathbf{D})\langle \mathbf{g}_i,\widehat\bu_j(\mathbf{D})\rangle$ and hence
    \begin{align*}
    \frac{1}{N}\sum_{i=1}^N \big(|\langle \mathbf{D}\mathbf{g}_i,\bu_j\rangle|^q-M_q\big)\ge t_j
    \ \Longleftrightarrow\
    \frac{1}{N}\sum_{i=1}^N |\langle \mathbf{g}_i,\widehat\bu_j(\mathbf{D})\rangle|^q - M_q\ \ge\ \frac{t_j+M_q}{\sigma_j(\mathbf{D})^q}-M_q.
    \end{align*}
    Let $\bSigma(\mathbf{D}):=\big(\langle \widehat\bu_r(\mathbf{D}),\widehat\bu_s(\mathbf{D})\rangle\big)_{1\le r,s\le k}$. The matrix $\bSigma(\mathbf{D})$ is positive semidefinite and $\bSigma(\mathbf{D})\to \bSigma$ as $\eps\downarrow0$. By Theorem~\ref{thm:abs-powers-gauss} with the corresponding rate function $\calI_{\calJ}^{\,\bSigma(\mathbf{D})}$,
    \[
    \limsup_{N\to\infty}\frac{1}{N^{2/q}}\log
    \PP\Big(\frac{1}{N}\sum_{i=1}^N |\langle \mathbf{D}\mathbf{g}_i, \bu_\cdot\rangle |^q - M_q  \ge \bt \Big)
    \ \le\ -\calI_{\calJ}^{\,\bSigma(\mathbf{D})}(\bt^{(\eps)}(\mathbf{D})),
    \]
    where $\bt^{(\eps)}(\mathbf{D})$ has coordinates $t_j^{(\eps)}(\mathbf{D}):=\frac{t_j+M_q}{\sigma_j(\mathbf{D})^q}-M_q$. Continuity of $(\bSigma,\bt)\mapsto\calI_{\calJ}^{\,\bSigma}(\bt)$ and $\sigma_j(\mathbf{D})\to1$ give
    \[
    \sup_{\mathbf{D}\in B_\eps(\Id_m)}\ \calI_{\calJ}^{\,\bSigma(\mathbf{D})}(\bt^{(\eps)}(\mathbf{D}))
    \ \xrightarrow[\eps\downarrow0]{}\ \calI_{\calJ}^{\,\bSigma}(\bt).
    \]
    With \eqref{eq:stiefel-gram-fast} this yields
    \[
    \limsup_{N\to\infty}\frac{1}{N^{2/q}}\log
    \PP\Big(\frac{1}{N}\sum_{i=1}^N |\langle  \sqrt{N}\,\bv_i, \bu_\cdot\rangle |^q - M_q  \ge \bt\Big)
    \le -\calI_{\calJ}^{\,\bSigma}(\bt).
    \]
    
    \noindent \emph{Lower bound.}
    Define
    \[
    \mathbf{Y}_N:=\Big(\frac{1}{N}\sum_{i=1}^N |\langle  \sqrt{N}\,\bv_i, \bu_\ell\rangle |^q - M_q\Big)_{\ell=1}^k,\qquad
    \mathbf{Z}_N:=\Big(\frac{1}{N}\sum_{i=1}^N |\langle  \mathbf{g}_i, \bu_\ell\rangle |^q - M_q\Big)_{\ell=1}^k.
    \]
    We prove exponential equivalence at speed \(N^{2/q}\). Fix \(\delta\in(0,1)\). Whenever \(\mathbf{A}_N\in B_\delta(\Id_m)\), we have
    \[
    |\sigma_\ell(\mathbf{A}_N)^q-1|\le C\delta,\quad \|\widehat\bu_\ell(\mathbf{A}_N)-\bu_\ell\|_2\le C\delta,
    \]
    for a constant \(C=C(m,q)\). Indeed, the map \(\bA\mapsto \sigma_\ell(\bA)^q\) is smooth near \(\Id_m\) with value \(1\) at \(\Id_m\), and similarly for \(\bA\mapsto \widehat\bu_\ell(\bA)\) with value \(\bu_\ell\).
    Using the inequality \(|x^q-y^q|\le q(|x|^{q-1}+|y|^{q-1})|x-y|\), we obtain
    \begin{align*}
    &\big||\langle \mathbf{A}_N \mathbf{g}_i,\bu_\ell\rangle|^q-|\langle \mathbf{g}_i,\bu_\ell\rangle|^q\big|\\
    &\qquad= \big|\sigma_\ell(\mathbf{A}_N)^q|\langle \mathbf{g}_i,\widehat\bu_\ell(\mathbf{A}_N)\rangle|^q-|\langle \mathbf{g}_i,\bu_\ell\rangle|^q\big|\\
    &\qquad\le |\sigma_\ell(\mathbf{A}_N)^q-1|\,|\langle \mathbf{g}_i,\widehat\bu_\ell(\mathbf{A}_N)\rangle|^q + \big||\langle \mathbf{g}_i,\widehat\bu_\ell(\mathbf{A}_N)\rangle|^q-|\langle \mathbf{g}_i,\bu_\ell\rangle|^q\big|\\
    &\qquad\le C\delta\|\mathbf{g}_i\|_2^q
    +q\big(|\langle \mathbf{g}_i,\widehat\bu_\ell(\mathbf{A}_N)\rangle|^{q-1}+|\langle \mathbf{g}_i,\bu_\ell\rangle|^{q-1}\big)\,|\langle \mathbf{g}_i,\widehat\bu_\ell(\mathbf{A}_N)-\bu_\ell\rangle|\\
    &\qquad\le C\delta\|\mathbf{g}_i\|_2^q
    +q(1+\delta)^{q-1}(\|\mathbf{g}_i\|_2^{q-1}+\|\mathbf{g}_i\|_2^{q-1})\,\|\mathbf{g}_i\|_2\,\|\widehat\bu_\ell(\mathbf{A}_N)-\bu_\ell\|_2\\
    &\qquad\le C'\delta\,\|\mathbf{g}_i\|_2^q,
    \end{align*}
    where we used \(|\langle g,u\rangle|\le \|g\|_2\|u\|_2\) and \(\|u\|_2=1\). The constant \(C'\) depends only on \(m\) and \(q\).
    Summing over \(i\) gives
    \[
    \|\mathbf{Y}_N-\mathbf{Z}_N\|_\infty\ \le\ \frac{C'\delta}{N}\sum_{i=1}^N \|\mathbf{g}_i\|_2^q
    \]
    if $\mathbf{A}_N\in B_\delta(\Id_m)$. Therefore, for any \(\eps>0\),
    \[
    \PP(\|\mathbf{Y}_N-\mathbf{Z}_N\|_\infty>\eps)
    \le \PP\Big(\frac{1}{N}\sum_{i=1}^N \|\mathbf{g}_i\|_2^q> \frac{\eps}{C'\delta}\Big)\ +\ \PP(\mathbf{A}_N\notin B_\delta(\Id_m)).
    \]
    The second term is negligible at speed \(N^{2/q}\). For the first term, using the norm equivalence \(\|x\|_2^q = (\sum_{r=1}^m x_r^2)^{q/2} \le m^{q/2}\sum_{r=1}^m |x_r|^q\) (valid for \(q\ge 2\) by Jensen's inequality),
    \[
    \PP\Big(\frac{1}{N}\sum_{i=1}^N \|\mathbf{g}_i\|_2^q> u\Big)
    \ \le\ \sum_{r=1}^m \PP\Big(\frac{1}{N}\sum_{i=1}^N |\langle \mathbf{g}_i,\mathbf{e}_r\rangle|^q> \frac{u}{m^{q/2+1}}\Big).
    \]
    Each summand decays like \(\exp(-c(u)\,N^{2/q})\). With \(u=\eps/(C'\delta)\), the rate goes to \(\infty\) as \(\delta\downarrow0\). Thus \(\mathbf{Y}_N\) and \(\mathbf{Z}_N\) are exponentially equivalent. Since \(\mathbf{Z}_N\) satisfies the LDP lower bound, the same holds for \(\mathbf{Y}_N\).    
\end{proof}

\begin{proof}[Proof of Theorem \ref{thm: supprt function LDP p<2}]
    By \cite[Lemma 4.3]{prochno2024limit},
    \[
    \PP\big(h(N^{1/p-1/2}\bV_N\BB_p^N,\cdot)\ge f\big)
    =\PP\Big(\forall\,\bu\in\bbS^{m-1}:\ \frac{1}{N}\sum_{i=1}^N\big|\langle\sqrt{N}\,\bv_i,\bu\rangle\big|^q\ge f(\bu)^q\Big).
    \]
    Fix a dense sequence without repetitions \(\{\bu_j\}_{j\ge1}\subseteq\bbS^{m-1}\). For each \(N\) the map \(\bu\mapsto N^{-1}\sum_{i=1}^N|\langle\sqrt{N}\,\bv_i,\bu\rangle|^q\) is continuous, hence
    \[
    \Big\{\forall\,\bu:\ \frac{1}{N}\sum_{i=1}^N|\langle\sqrt{N}\,\bv_i,\bu\rangle|^q\ge f(\bu)^q\Big\}
    =\bigcap_{j\ge1}\Big\{\frac{1}{N}\sum_{i=1}^N|\langle\sqrt{N}\,\bv_i,\bu_j\rangle|^q\ge f(\bu_j)^q\Big\}.
    \]
    For \(N\in\bbN\) and \(j\ge1\) set
    \[
    A_j^{(N)}:=\Big\{\frac{1}{N}\sum_{i=1}^N\big|\langle \sqrt{N}\,\bv_i,\bu_j\rangle\big|^q\ge f(\bu_j)^q\Big\}.
    \]
    Then \(\bigcap_{j\ge1}A_j^{(N)}\subseteq \bigcap_{j=1}^k A_j^{(N)}\) for all \(k\), so
    \[
    \PP\Big(\bigcap_{j\ge1}A_j^{(N)}\Big)\le \PP\Big(\bigcap_{j=1}^k A_j^{(N)}\Big) \qquad\text{and}\qquad
    \PP\Big(\bigcap_{j\ge1}A_j^{(N)}\Big)=\lim_{k\to\infty}\PP\Big(\bigcap_{j=1}^k A_j^{(N)}\Big).
    \]
    
    \noindent \emph{Upper bound.} For fixed \(k\) apply Theorem \ref{thm: LDP for finite stiefel evals} to the \(k\)-tuple \((\bu_1,\ldots,\bu_k)\) with \(t_j=f(\bu_j)^q-M_q\). Writing \(\bSigma_{r,s}=\langle \bu_r,\bu_s\rangle\) and \(\bs=(s_1,\ldots,s_k)\), define
    \[
    \calJ_k(\bs):=\calI_{\calJ^{(k)}}\big((\bs^q-M_q\bfm1)\big),\qquad \bs\in[M_q^{1/q},\infty)^k,
    \]
    where $\calI_{\calJ^{(k)}}$ is the rate from Theorem~\ref{thm:abs-powers-gauss} with covariance matrix
    $\bSigma^{(k)}=(\langle \bu_r,\bu_s\rangle)_{1\le r,s\le k}$. Therefore,
    \[
    \limsup_{N\to\infty}\frac{1}{N^{2/q}}\log \PP\Big(\bigcap_{j=1}^k A_j^{(N)}\Big)
    \le -\,\calJ_k\big(f(\bu_1),\ldots,f(\bu_k)\big).
    \]
    Since \(\PP(\bigcap_{j\ge1}A_j^{(N)})\le \PP(\bigcap_{j=1}^k A_j^{(N)})\) for all \(k\), we obtain
    \[
    \limsup_{N\to\infty}\frac{1}{N^{2/q}}\log \PP\big(h(N^{1/p-1/2}\bV_N\BB_p^N,\cdot)\ge f\big)
    \le -\sup_{k\in\bbN} \calJ_k\big(f(\bu_1),\ldots,f(\bu_k)\big),
    \]
    which implies the desired bound.\\
    \noindent \emph{Lower bound.}
For fixed $k\in\bbN$, apply Theorem~\ref{thm: LDP for finite stiefel evals} to $(\bu_1,\ldots,\bu_k)$ with
$t_j=f(\bu_j)^q-M_q$. With $\calJ_k$ as defined above, this gives
\[
\lim_{N\to\infty}\frac{1}{N^{2/q}}\log \PP\Big(\bigcap_{j=1}^k A_j^{(N)}\Big)
= -\,\calJ_k\big(f(\bu_1),\ldots,f(\bu_k)\big).
\]
Since $\bigcap_{j\ge1}A_j^{(N)}=\bigcap_{k\ge1}\bigcap_{j=1}^kA_j^{(N)}$ and the events decrease in $k$,
\[
\PP\Big(\bigcap_{j\ge1}A_j^{(N)}\Big)=\inf_{k\in\bbN}\PP\Big(\bigcap_{j=1}^kA_j^{(N)}\Big),
\]
hence
\[
\frac{1}{N^{2/q}}\log \PP\Big(\bigcap_{j\ge1}A_j^{(N)}\Big)
=\inf_{k\in\bbN}\frac{1}{N^{2/q}}\log \PP\Big(\bigcap_{j=1}^kA_j^{(N)}\Big).
\]
Therefore,
\begin{align*}
\liminf_{N\to\infty}\frac{1}{N^{2/q}}\log \PP\Big(\bigcap_{j\ge1}A_j^{(N)}\Big)
&\ge \inf_{k\in\bbN}\ \liminf_{N\to\infty}\frac{1}{N^{2/q}}\log \PP\Big(\bigcap_{j=1}^kA_j^{(N)}\Big)\\
&= \inf_{k\in\bbN}\Big(-\calJ_k\big(f(\bu_1),\ldots,f(\bu_k)\big)\Big)\\
&= -\sup_{k\in\bbN}\calJ_k\big(f(\bu_1),\ldots,f(\bu_k)\big).
\end{align*}
    Combining the bounds yields
    \[
    \lim_{N\to\infty}\frac{1}{N^{2/q}}\log \PP\big(h(N^{1/p-1/2}\bV_N\BB_p^N,\cdot)\ge f\big)
    = - \sup_{k\in\bbN} \calJ_k\big(f(\bu_1),\ldots,f(\bu_k)\big).
    \]
\end{proof}

\begin{proof}[Proof of Corollary \ref{cor: MDP in Rd}]
    Set \(\psi(\bx,\btheta):=\bx-\btheta\). Then \(\bPsi(\btheta)=\EE[\psi(\bX,\btheta)]=\bmu-\btheta\) and \(\bPsi_N(\btheta)=N^{-1}\sum_{i=1}^N \bX_i-\btheta\). Condition (C1) holds. For (C2): \(\bPsi\) has the unique zero \(\btheta_0=\bmu\); \(\BB_\eta(\bmu)\subseteq\RR^d\) for every \(\eta>0\); the derivative at \(\bmu\) is \(-\operatorname{Id}_d\) (nonsingular); \(\bPsi\) is a homeomorphism. Moreover, by \cite[Lemma 2.5]{eichelsbacher2003moderate} and \eqref{eq: cond for MDP}, \(\EE\|\bX\|^2<\infty\). For (C3)(i),
    \[
    \frac{\sqrt{N}}{a_N}\sup_{\btheta\in\BB_\eta(\bmu)}\|\bPsi_N(\btheta)-\bPsi(\btheta)\|
    =\Big\|\frac{1}{a_N\sqrt{N}}\sum_{i=1}^N (\bX_i-\bmu)\Big\|\xrightarrow{\PP}0,
    \]
    since \(a_N\to\infty\) and by the CLT and Slutsky's theorem.
    For (C3)(ii), using that \(\sup_{\btheta\in\BB_\eta(\bmu)}\|\bX-\btheta\|\le \|\bX-\bmu\|+\eta\),
    \[
    N\,\PP\big(\sup_{\btheta\in\BB_\eta(\bmu)}\|\psi(\bX,\btheta)\|\ge \sqrt{N}\,a_N\big)
    \le N\,\PP\big(\|\bX-\bmu\|\ge \sqrt{N}\,a_N-\eta\big)
    \le N\,\PP\big(\|\bX\|\ge \frac12\sqrt{N}\,a_N\big)
    \]
    for all large \(N\). Hence, \eqref{eq: cond for MDP} implies (C3)(ii). Proposition \ref{prop: MDP general} gives an LDP with speed \(a_N^2\) and rate function
    \[
    I(\bz)=\frac12\langle \bz,\bSigma^{-1}\bz\rangle,\qquad \bSigma=\Cov(\bX).
    \]
    Finally,
    \[
    \frac{1}{a_N\sqrt{N}}\sum_{i=1}^N(\bX_i-\bmu)=\frac{\sqrt{N}}{a_N}(\btheta_N-\btheta_0),
    \]
    so the claim follows.
    \end{proof}
    
    \begin{proof}[Proof of Theorem \ref{thm:MDP}]
    Write \(x_N=\sqrt{N}\,a_N\) with \(a_N\to\infty\) and \(a_N/\sqrt{N}\to0\). It suffices, by Corollary \ref{cor: MDP in Rd} applied to \(\bX\), to verify \eqref{eq: cond for MDP} with \(a_N=x_N/\sqrt{N}\).
    By the tail assumption,
    \[
    N\,\PP\big(\|\bX\|\ge x_N\big)\le N\,e^{-c x_N^{\alpha}}
    \le \exp\big(\log N-c\,x_N^{\alpha}\big)
    \le \exp\big(-c' x_N^{\alpha}\big)
    \]
    for large \(N\) (here \(c'\in(0,c)\)), since \(\sqrt{N}/x_N\to0\) implies \(x_N^{\alpha}/\log N\to\infty\).
    Therefore,
    \[
    \limsup_{N\to\infty}\frac{N}{x_N^2}\log\Big(N\,\PP(\|\bX\|\ge x_N)\Big)
    \le -c'\,\limsup_{N\to\infty}\frac{N}{x_N^{2-\alpha}}
    = -\infty,
    \]
    because \(x_N/N^{1/(2-\alpha)}\to0\).
    Corollary \ref{cor: MDP in Rd} with \(a_N=x_N/\sqrt{N}\) yields the LDP on \(\bbR^k\) with speed \(x_N^2/N\) and good rate function \(I(\bz)=\frac12\,\bz^\top\bSigma^{-1}\bz\).
\end{proof}

\section*{Acknowledgment}
	
PT and JP are supported by the DFG project \emph{Limit theorems for the volume of random projections of $\ell_p$-balls} (project number 516672205). During the work on this paper, PT and JP visited NG at TUM. We are grateful for the hospitality and the support.



\bibliographystyle{settings/apalike2}
\bibliography{settings/biblio}

@book{dembo2009techniques,
  title={Large Deviations Techniques and Applications},
  author={Dembo, Amir and Zeitouni, Ofer},
  year={2009},
  publisher={Springer}
}

@book{kallenberg1997foundations,
  title={Foundations of modern probability},
  author={Kallenberg, Olav},
  volume={2},
  year={1997},
  publisher={Springer}
}

@article{eichelsbacher2003moderate,
  title={Moderate deviations for iid random variables},
  author={Eichelsbacher, Peter and L{\"o}we, Matthias},
  journal={ESAIM: Probability and Statistics},
  volume={7},
  pages={209--218},
  year={2003},
  publisher={EDP Sciences}
}

@article {kim2021large,
    AUTHOR = {Kim, Steven Soojin and Ramanan, Kavita},
     TITLE = {Large deviation principles induced by the {S}tiefel manifold,
              and random multidimensional projections},
   JOURNAL = {Electron. J. Probab.},
  FJOURNAL = {Electronic Journal of Probability},
    VOLUME = {28},
      YEAR = {2023},
     PAGES = {Paper No. 169, 23},
      ISSN = {1083-6489},
   MRCLASS = {60F10 (52A23 60B20)},
  MRNUMBER = {4677189},
MRREVIEWER = {Antoine\ J.\ Lejay},
       DOI = {10.1214/23-ejp1023},
       URL = {https://doi.org/10.1214/23-ejp1023},
}

@article{kim2018conditional,
  title={A conditional limit theorem for high-dimensional $\ell_p$-spheres},
  author={Kim, Steven S and Ramanan, Kavita},
  journal={Journal of Applied Probability},
  volume={55},
  number={4},
  pages={1060--1077},
  year={2018},
  publisher={Cambridge University Press}
}

@article{gao2011delta,
  title={Delta method in large deviations and moderate deviations for estimators},
  author={Gao, Fuqing and Zhao, Xingqiu},
  journal={The Annals of Statistics},
  pages={1211--1240},
  year={2011},
  publisher={JSTOR}
}

@incollection{brosset2022large,
  title={Large deviations at the transition for sums of {W}eibull-like random variables},
  author={Brosset, Fabien and Klein, Thierry and Lagnoux, Agn{\`e}s and Petit, Pierre},
  booktitle={S{\'e}minaire de Probabilit{\'e}s LI},
  pages={239--257},
  year={2022},
  publisher={Springer}
}

@article{aurzada2020large,
  title={Large deviations for infinite weighted sums of stretched exponential random variables},
  author={Aurzada, Frank},
  journal={Journal of Mathematical Analysis and Applications},
  volume={485},
  number={2},
  pages={123814},
  year={2020},
  publisher={Elsevier}
}

@article{nagaev1969integral,
	title={Integral limit theorems taking into account large deviations when {C}ram{\'e}r's condition does not hold. II},
	author={Nagaev, Aleksandr Viktorovich},
	journal={Teoriya Veroyatnostei i ee Primeneniya},
	volume={14},
	number={2},
	pages={203--216},
	year={1969},
	publisher={Russian Academy of Sciences, Steklov Mathematical Institute of Russian~…}
}

@article{chakraborty2013multivariate,
	title={On multivariate folded normal distribution},
	author={Chakraborty, Ashis Kumar and Chatterjee, Moutushi},
	journal={Sankhya B},
	volume={75},
	pages={1--15},
	year={2013},
	publisher={Springer}
}

@article{hanagal1996multivariate,
	title={A multivariate {W}eibull distribution},
	author={Hanagal, David D},
	journal={Economic Quality Control},
	volume={11},
	pages={193--200},
	year={1996},
	publisher={Citeseer}
}

@article{hagele2021large,
	title={Large deviations for a class of multivariate heavy-tailed risk processes used in insurance and finance},
	author={H{\"a}gele, Miriam and Lehtomaa, Jaakko},
	journal={Journal of Risk and Financial Management},
	volume={14},
	number={5},
	pages={202},
	year={2021},
	publisher={MDPI}
}

@article{prochno2024limit,
  title={Limit Theorems for the Volume of Random Projections and Sections of $\ell_p^N$-balls},
  author={Prochno, Joscha and Th{\"a}le, Christoph and Tuchel, Philipp},
  journal={arXiv preprint arXiv:2412.16054},
  year={2024}
}

@article{Nagaev1969a,
  author  = {Nagaev, A. V.},
  title   = {Integral limit theorems for large deviations when {Cram{\'e}r}'s condition is not fulfilled I},
  journal = {Theory of Probability and its Applications},
  year    = {1969},
  volume  = {14},
  pages   = {51--64},
}

@article{Nagaev1969b,
  author  = {Nagaev, A. V.},
  title   = {Integral limit theorems for large deviations when {Cram{\'e}r}'s condition is not fulfilled II},
  journal = {Theory of Probability and its Applications},
  year    = {1969},
  volume  = {14},
  pages   = {193--208},
}

@article{Denisov2008,
  author  = {Denisov, D. and Dieker, A. B. and Shneer, V.},
  title   = {Large deviations for random walks under subexponentiality: The big-jump domain},
  journal = {Annals of Probability},
  year    = {2008},
  volume  = {36},
  number  = {5},
  pages   = {1946--1991},
  doi     = {10.1214/07-AOP382},
}

@article{Gantert2014,
  author  = {Gantert, Nina and Ramanan, Kavita and Rembart, Franz},
  title   = {Large deviations for weighted sums of stretched exponential random variables},
  journal = {Electronic Communications in Probability},
  year    = {2014},
  volume  = {19},
  pages   = {1--14},
  doi     = {10.1214/ECP.v19-3266},
}

@article{cramer1938nouveau,
  author = {Cram{\'e}r, Harald},
  title = {Sur un nouveau th\'eor\`eme-limite de la th\'eorie des probabilit\'es},
  journal = {Actualit\'es Scientifiques et Industrielles},
  volume = {736},
  pages = {5--23},
  year = {1938}
}

@article{lehtomaa2017large,
  author = {Lehtomaa, Jaakko},
  title = {Large deviations of means of heavy-tailed random variables with finite moments of all orders},
  journal = {Journal of Applied Probability},
  volume = {54},
  number = {1},
  pages = {66--81},
  year = {2017}
}

@article{varadhan1966asymptotic,
  title={Asymptotic probabilities and differential equations},
  author={S. R. S. Varadhan},
  journal={Communications on Pure and Applied Mathematics},
  volume={19},
  number={3},
  pages={261--286},
  year={1966},
  publisher={Wiley Online Library}
}

@article{donsker1975asymptotic,
  title={Asymptotic evaluation of certain {M}arkov process expectations for large time, I},
  author={M. D. Donsker and S. R. S. Varadhan},
  journal={Communications on pure and applied mathematics},
  volume={28},
  number={1},
  pages={1--47},
  year={1975},
  publisher={Wiley Online Library}
}

@article{kabluchko2024strange,
  title={Strange shadows of $\ell_p$-balls},
  author={Kabluchko, Zakhar and Sonnleitner, Mathias},
  journal={arXiv preprint arXiv:2412.17475},
  year={2024}
}

@article{KabluchkoProchnoThaele2019,
  author        = {Kabluchko, Zakhar and Prochno, Joscha and Th{\"a}le, Christoph},
  title         = {High-dimensional limit theorems for random vectors in {$\ell_p^n$}-balls},
  journal       = {Communications in Contemporary Mathematics},
  year          = {2019},
  volume        = {21},
  number        = {1},
  pages         = {1750092},
  doi           = {10.1142/S0219199717500924},
  eprint        = {1709.09470},
  archivePrefix = {arXiv},
  primaryClass  = {math.FA}
}

@article{KabluchkoProchnoThaele2021,
  author        = {Kabluchko, Zakhar and Prochno, Joscha and Th{\"a}le, Christoph},
  title         = {High-dimensional limit theorems for random vectors in {$\ell_p^n$}-balls. {II}},
  journal       = {Communications in Contemporary Mathematics},
  year          = {2021},
  volume        = {23},
  number        = {3},
  pages         = {1950073},
  doi           = {10.1142/S0219199719500731},
  eprint        = {1906.03599},
  archivePrefix = {arXiv},
  primaryClass  = {math.PR}
}

@article{GantertKimRamanan2017,
  author        = {Gantert, Nina and Kim, Steven Soojin and Ramanan, Kavita},
  title         = {Large deviations for random projections of {$\ell^p$} balls},
  journal       = {The Annals of Probability},
  year          = {2017},
  volume        = {45},
  number        = {6B},
  pages         = {4419--4476},
  doi           = {10.1214/16-AOP1169},
  eprint        = {1512.04988},
  archivePrefix = {arXiv},
  primaryClass  = {math.PR}
}

@article{AlonsoGutierrezProchnoThaele2018,
  author        = {Alonso-Guti{\'e}rrez, David and Prochno, Joscha and Th{\"a}le, Christoph},
  title         = {Large deviations for high-dimensional random projections of {$\ell_p^n$}-balls},
  journal       = {Advances in Applied Mathematics},
  year          = {2018},
  volume        = {99},
  pages         = {1--35},
  doi           = {10.1016/j.aam.2018.04.003},
  eprint        = {1608.03863},
  archivePrefix = {arXiv},
  primaryClass  = {math.PR}
}

@article{prochno2024large,
  title={The large and moderate deviations approach in geometric functional analysis},
  author={Prochno, Joscha},
  journal={arXiv preprint arXiv:2403.03940},
  year={2024}
}

@article{MarshallOlkin1967,
  author  = {Marshall, Albert W. and Olkin, Ingram},
  title   = {A Multivariate Exponential Distribution},
  journal = {Journal of the American Statistical Association},
  volume  = {62},
  number  = {317},
  pages   = {30--44},
  year    = {1967},
  doi     = {10.1080/01621459.1967.10482885}
}

@article{JoseRisticJoseph2011,
  author  = {Jose, K. K. and Risti{\'c}, Miroslav M. and Joseph, Ancy},
  title   = {Marshall--Olkin bivariate {W}eibull distributions and processes},
  journal = {Statistical Papers},
  volume  = {52},
  pages   = {789--798},
  year    = {2011},
  doi     = {10.1007/s00362-009-0287-8}
}

@article{GomezGomezVillegasMarin1998,
  author  = {G{\'o}mez, E. and G{\'o}mez-Villegas, M. A. and Mar{\'i}n, J. M.},
  title   = {A multivariate generalization of the power exponential family of distributions},
  journal = {Communications in Statistics -- Theory and Methods},
  volume  = {27},
  number  = {3},
  pages   = {589--600},
  year    = {1998}
}

@article{DangBrowneMcNicholas2015,
  author  = {Dang, Utkarsh J. and Browne, Ryan P. and McNicholas, Paul D.},
  title   = {Mixtures of multivariate power exponential distributions},
  journal = {Biometrics},
  volume  = {71},
  number  = {4},
  pages   = {1081--1089},
  year    = {2015},
  doi     = {10.1111/biom.12351}
}

@article{VerdoolaegeScheunders2012,
  author  = {Verdoolaege, Geert and Scheunders, Paul},
  title   = {On the Geometry of Multivariate Generalized {G}aussian Models},
  journal = {Journal of Mathematical Imaging and Vision},
  volume  = {43},
  number  = {3},
  pages   = {180--193},
  year    = {2012},
  doi     = {10.1007/s10851-011-0297-8}
}

@inproceedings{WainwrightSimoncelli1999,
  author    = {Wainwright, Martin J. and Simoncelli, Eero P.},
  title     = {Scale Mixtures of {G}aussians and the Statistics of Natural Images},
  booktitle = {Advances in Neural Information Processing Systems},
  volume    = {12},
  pages     = {856--862},
  year      = {1999}
}

@article{HashorvaPakesTang2010,
  author  = {Hashorva, Enkelejd and Pakes, Anthony G. and Tang, Qihe},
  title   = {Asymptotics of random contractions},
  journal = {Insurance: Mathematics and Economics},
  volume  = {47},
  number  = {3},
  pages   = {405--414},
  year    = {2010},
  doi     = {10.1016/j.insmatheco.2010.08.006}
}

@article{Latala2006,
  author  = {Lata{\l}a, Rafa{\l}},
  title   = {Estimates of moments and tails of {G}aussian chaoses},
  journal = {The Annals of Probability},
  volume  = {34},
  number  = {6},
  pages   = {2315--2331},
  year    = {2006},
  doi     = {10.1214/009117906000000421}
}

@article{KuchibhotlaChakrabortty2018,
  author       = {Kuchibhotla, Arun Kumar and Chakrabortty, Abhishek},
  title        = {Moving Beyond Sub-{G}aussianity in High-Dimensional Statistics: Applications in Covariance Estimation and Linear Regression},
  year         = {2018},
  eprint       = {1804.02605},
  archivePrefix= {arXiv}
}

@article{KabluchkoProchnoThale2019,
  author    = {Kabluchko, Zakhar and Prochno, Joscha and Th{\"a}le, Christoph},
  title     = {A New Look at Random Projections of the Cube and General Product Measures},
  journal   = {Bernoulli},
  volume    = {27},
  number    = {3},
  pages     = {2117--2138},
  year      = {2021},
  doi       = {10.3150/20-BEJ1303}
}

\excludePart{
\begin{appendix}
\section{Appendix}
\end{appendix}
}
\end{document}